\newtheorem{thm}{Theorem}[section]
\newtheorem{lem}[thm]{Lemma}
\newtheorem{prop}[thm]{Proposition}
\newtheorem{cor}[thm]{Corollary}
\theoremstyle{definition}
\newtheorem{defn}[thm]{Definition}
\newtheorem{rmk}[thm]{Remark}
\numberwithin{equation}{section}
\newcommand{\N}{\mathbb{N}}
\newcommand{\R}{\mathbb{R}}
\newcommand{\Q}{\mathbb{Q}}
\newcommand{\K}{\mathcal{K}}
\newcommand{\M}{\mathcal{M}}
\newcommand{\T}{\mathcal{T}}
\newcommand{\U}{\mathcal{U}}
\newcommand{\ep}{\varepsilon}
\newcommand{\Hu}{\mathbb{H}}
\newcommand{\C}{\mathfrak{C}}
\newcommand{\LC}{\mathfrak{LC}}
\newcommand{\SC}{\mathfrak{SC}}
\newcommand{\nbd}{\nobreakdash}
\newcommand{\TC}{\Theta\C}
\newcommand{\WTC}{\mathfrak{W}\Theta\mathfrak{C}}
\newcommand{\nin}{{n\in\N}}
\newcommand{\nti}{{n\to\infty}}
\let\inf\relax \DeclareMathOperator*\inf{\vphantom{p}inf}
\DeclareMathOperator{\cls}{cls}
\newcommand{\meas}{\mathrm{meas}}
\DeclareMathOperator{\supp}{supp}
\DeclareMathOperator{\dist}{dist}
\DeclareRobustCommand{\subtitle}[1]{\\[2ex]\normalfont \emph{#1}}
\begin{document}
\title[Weak topologies for Carath\'eodory differential equations]
{Weak topologies for Carath\'eodory differential equations. Continuous dependence, exponential Dichotomy and attractors\subtitle{D\MakeLowercase{edicated to the memory of} G\MakeLowercase{eorge} S\MakeLowercase{ell}}}
\author[I.P.~Longo]{Iacopo P. Longo}
\address{Departamento de Matem\'{a}tica Aplicada, Universidad de
Valladolid, Paseo del Cauce 59, 47011 Valladolid, Spain.}
\email[Iacopo P. Longo]{iaclon@wmatem.eis.uva.es}
\email[Sylvia Novo]{sylnov@wmatem.eis.uva.es}
\email[Rafael Obaya]{rafoba@wmatem.eis.uva.es}
\thanks{Partly supported by MINECO/FEDER
under project MTM2015-66330-P and EU Marie-Sk\l odowska-Curie ITN Critical Transitions in
Complex Systems (H2020-MSCA-ITN-2014 643073 CRITICS)}
\author[S.~Novo]{Sylvia Novo}
\author[R.~Obaya]{Rafael Obaya}
\subjclass[2010]{}
\date{}
\begin{abstract}
We introduce new weak topologies and spaces of Carath\'{e}odory functions where the solutions of the ordinary differential equations depend continuously on the initial data and vector fields. The induced local skew-product flow is proved to be continuous, and a notion of linearized skew-product flow is provided.  Two applications are shown. First, the propagation of the exponential dichotomy over the trajectories of the linearized skew-product flow and the structure of the dichotomy or Sacker-Sell spectrum. Second, how particular bounded absorbing sets for the process defined by a Carath\'{e}odory vector field $f$ provide bounded pullback attractors for the processes with vector fields in the alpha-limit set, the omega-limit set or the whole hull of $f$. Conditions for the existence of a pullback or a global attractor for the skew-product semiflow, as well as application examples  are also given.
\end{abstract}
\keywords{Non-autonomous Carath\'eodory differential equations, linearized skew-product flow, exponential dichotomy, pullback and forward attractors}
\maketitle
\section{Introduction}\label{secintro}
This paper contains the development of topological methods to study the local and global behaviour of the solutions of families of nonautonomous Carath\'{e}odory ordinary differential equations. In the first part of this work, we introduce new  weak topologies and  Carath\'{e}odory spaces  where the solutions of the differential equations depend continuously on the initial data.  Such contributions complete and extend some results obtained in Longo \emph{et al.}~\cite{paper:LNO}, where the  strong version of these topologies and spaces has been investigated. Thanks to the obtained continuity theorems,  the propagation of the exponential dichotomy  over the trajectories of the linearized semiflows as well as the structure of the corresponding  Sacker-Sell spectrum can be analyzed. In the last part of the paper, we provide conditions under which the existence of particular bounded absorbing sets for the processes defined by a suitable dissipative Carath\'{e}odory vector field $f$, allows to deduce the existence of bounded pullback attractors for the processes with vector field  belonging to either the alpha-limit set, the  omega-limit set, or the whole hull of $f$. Under appropriate assumptions, these theorems also provide the existence of a pullback or a global attractor for the induced skew-product semiflow.
\par\smallskip
 The study of the topologies of continuity for Carath\'{e}odory differential equations is a classical question considered by Artstein~\cite{paper:ZA1,paper:ZA2,paper:ZA3}, Heunis~\cite{paper:AJH}, Miller and Sell~\cite{book:RMGS, paper:RMGS1}, Neustadt~\cite{paper:LWN}, Opial~\cite{paper:O}, among many others. We continue the work started in~\cite{paper:LNO} introducing new dynamical arguments and  methods that allow a more exhaustive analysis of the qualitative behavior of the solutions and some of their important dynamical implications. A range of dynamical scenarios is opened where  it is possible to combine techniques of continuous skew-product flows, processes and random dynamical systems in order to obtain more precise dynamical information (see Arnold~\cite{book:LA}, Aulbach and Wanner~\cite{paper:AW}, Berger and Siegmund~\cite{paper:BS},  Caraballo and Han~\cite{book:CH}, Carvalho \emph{et al.}~\cite{book:CLR}, Johnson \emph{et al.}~\cite{book:JONNF}, P\"otzsche and Rasmussen~\cite{paper:CPMR}, Sacker and Sell~\cite{sase3, sase4, sase5, sase}, Sell~\cite{book:GS},  Shen and Yi~\cite{paper:WSYY} and the references therein).
 \par\smallskip
The structure and main results of the paper are organized as follows. Subsection~\ref{spatop} is devoted to the introduction of the spaces and topologies used along the paper. In particular, the classical weak topology  $\sigma_D$ and the classical strong topology  $\T_D$ on the space $\SC$ of strong Carath\'eodory functions, are recalled. Such topologies, as well as other weak and strong topologies, have been studied in~\cite{paper:ZA1,paper:ZA2,paper:ZA3,paper:AJH,paper:LWN,book:RMGS, paper:RMGS1}. Additionally, we recall the   strong topology $T_\Theta$ on the space $\TC$, presented in~\cite{paper:LNO}, and introduce a new weak topology $\sigma_\Theta$, defined on the new space $\WTC$ in terms of a countable family of moduli of continuity $\Theta$.  The  locally convex space $(\WTC,\sigma_\Theta)$ represents the weak version of the locally convex space $(\TC , \T_\Theta)$. Subsection~\ref{firtop} contains some preliminary topological results for such a new metric space. \par\smallskip
In Section~\ref{sec:cont-Flow} we prove that if $E \subset \WTC$  admits $L^1_{loc}$-equicontinuous $m$-bounds then the translation map is continuous on $(E,\sigma_\Theta)$. As a corollary, when $f\in\WTC$ has $L^1_{loc}$-equicontinuous $m$-bounds and $E$ is the Hull of $f$ in $(\WTC,\sigma_\Theta)$,
  one obtains a continuous flow on $E$. Furthermore, if $E \subset \LC$ admits $L^1_{loc}$-equicontinuous $m$-bounds, one has that the $m$-bounds determine a suitable set of moduli of continuity $\Theta$. Then, starting from $\Theta$ and considering any set $B \subset \WTC(\R^N)$ and $C \subset \WTC(\R^N)$ with $L^1_{loc}$-equicontinuous $m$-bounds, we prove the continuity of the solutions of the triangular Carath\'eodory systems
\begin{equation*}
\begin{cases}
\dot x=f(t,x), &x(0)=x_0\,,\\
\dot y=F(t,x)\,y+h(t,x), &y(0)= y_0\,,
\end{cases}
\end{equation*}
with respect to the initial data $(f,F,h,x_0,y_0)$ in the product space  $ (E, \sigma_\Theta)\times (B,\sigma_\Theta) \times (C, \sigma_\Theta) \times \R^N \times \R^N$. As a consequence, we deduce the continuity of the local skew-product flow given by the base flow $(t,g,G,k)\mapsto (g_t,G_t,k_t)$ on the Hull of $(f,F,h)$ in $(\LC\times\WTC(\R^{N\times N})\times\WTC,\sigma_\Theta\times\sigma_\Theta\times\sigma_\Theta)$, and by the solutions, $x(t,g,x_0)$ and $y(t,g,G,k,x_0,y_0)$, of the corresponding differential equations.\par\smallskip
Section~\ref{expdich} studies the linearized skew-product flow, defined when $f \in \LC$ is continuously differentiable with respect to $x$ and both $f$ and its jacobian $J_xf \in \SC(\R^{N\times N})$ has $L^1_{loc}$-equicontinuous $m$-bounds. Denoting by $\Hu$ the hull of $(f,J_xf)$ in the space $ (\LC\times\TC,\sigma_\Theta\times\sigma_\Theta)$ and by $\Omega=\Hu\times \R^N$, one can write
 \[\begin{array}{cccc}\Psi\colon& \R\times\Omega\times\R^N &\to& \Omega\times \R^N\\ & (t,g,G,x_0,y_0)&\mapsto& (g_t,G_t,x(t,g,x_0),y(t,g,G,x_0,y_0))\end{array}\]
 when the solutions of $\dot x=g(t,x)$ are globally defined.
For each  $\omega =( g,G,x_0)$ we show that, if  the linear system $\dot y=G(\omega_t)\,y=G(t,x(t,g,x_0))\,y$ has exponential dichotomy, then it has exponential dichotomy over  $\Hu(\omega)=\cls\{\omega_t\mid t\in \R\}$ and hence, the dynamical spectrum $\Sigma(\omega)=\Sigma(\Hu(\omega))$. From here we describe  $\Sigma(\omega)$ following the arguments  given in~\cite{sase} when $\Hu(\omega)$ is compact and in Siegmund~\cite{paper:SS} for general $L^1_{loc}$-coefficients, case in which $\Sigma(\omega)$ could be unbounded. However, when $x(t, g,x_0)$ is  bounded we deduce that the solutions of the linear system have bounded growth and $\Sigma(\omega)$ is the union of $k \leq N$ compact~intervals. An analogous analysis is also carried out for the strong topology $\T_\Theta$.
 \par\smallskip
 Section~\ref{sec-pullback} deals with pullback and global attractors for Carath\'{e}odory ODEs as an application of the continuity of the local skew-product flow. In particular, starting from specific properties on the solutions of an initial problem $\dot x= f(t,x)$, it is possible to obtain the existence of a bounded pullback attractor for the processes induced by systems with vector field in either the alpha limit set of $f$, the omega limit set of $f$, or the whole hull of $f$. Furthermore, conditions for the existence of pullback and global attractors for the induced skew-product flow are also provided.\par\smallskip
 Finally, Section~\ref{examples} provides sufficient conditions under which the results of the previous section can be applied. In fact, several types of attractors, both for the induced process and the induced skew-product flow, are obtained. In Subsection~\ref{scalar} the size of the solutions of a Carath\'eodory differential system $\dot x=f(t,x)$ is compared with the size of the solutions of a scalar linear equation, while in Subsection~\ref{system} a comparison with a system of linear Carath\'eodory equations is given.
\section{Topological preliminaries}\label{sectopo}
This section provides the topological set up for the entire paper. Initially, the topological spaces and the most important topological properties are introduced or recalled. Then, some preliminary results on such spaces are shown.
\subsection{Spaces and topologies}\label{spatop}
In the following, we will denote by $\R^N$ the $N$-dimensional euclidean space with norm $|\cdot|$ and by $B_r$ the closed ball of $\R^N$ centered at the origin and with radius $r$. When $N=1$ we will simply write $\R$ and the symbol $\R^+$ will denote the set of positive real numbers. Moreover, for any interval $I\subseteq\R$ and any $W\subset\R^N$, we will use the following notation
\begin{itemize}
\item[] $C(I,W)$: space of continuous functions from $I$ to $W$ endowed with the norm $\|\cdot\|_\infty$.
\item[] $C_C(\R)$: space of real continuous functions with compact support in $\R$, endowed with the norm $\|\cdot\|_\infty$. When we want to restrict to the positive continuous functions with compact support in $\R$, we will write  $C^+_C(\R)$.
\item[] $L^1(I,\R^N)$: space of measurable functions from $I$ to $\R^N$ whose norm is in the Lebesgue space $L^1(I)$.
\item[] $L^1_{loc}(\R^N)$: the space of all functions $x(\cdot)$ of $\R$ into $\R^N$ such that for every compact interval $I\subset\R$, $x(\cdot)$ belongs to $L^1\big(I,\R^N\big)$. When $N=1$, we will simply write $L^1_{loc}$.
\end{itemize}
We will consider, and denote by $\C(\R^M)$ (or simply $\C$ when $M=N$), the set of functions $f\colon\R\times\R^N\to \R^M$ satisfying
\begin{itemize}
\item[(C1)] $f$ is Borel measurable and
\item[(C2)] for every compact set $K\subset\R^N$ there exists a real-valued function $m^K\in L^1_{loc}$, called \emph{$m$-bound} in the following, such that for almost every $t\in\R$, one has $|f(t,x)|\le m^K(t)$ for any $x\in K$.
\end{itemize}
Now we introduce the sets of Carath\'eodory functions which are used in this work.
\begin{defn}\label{def:LC}
A function $f\colon\R\times\R^N\to \R^M$ is said to be \emph{Lipschitz Carath\'eodory}, and we will write $f\in \LC(\R^M)$ (or simply $f\in\LC$ when $M=N$), if it satisfies (C1), (C2) and
\begin{itemize}
\item[(L)] for every compact set $K\subset\R^N$ there exists a real-valued function $l^K\in L^1_{loc}$ such that $|f(t,x)-f(t,y)|\le l^K(t)|x-y|$ for any $x,y\in K$ and almost every $t\in\R$.
\end{itemize}
In particular, for any compact set $K\subset\R^N$, we refer to \emph{the optimal $m$-bound} and \emph{the optimal $l$-bound} of $f$ as to
\begin{equation}
m^K(t)=\sup_{x\in K}|f(t,x)|\qquad \mathrm{and}\qquad l^K(t)=\sup_{\substack{x,y\in K\\ x\neq y}}\frac{|f(t,x)-f(t,y)|}{|x-y|}\, ,
\label{eqOptimalMLbound}
\end{equation}
respectively. Clearly, for any compact set $K\subset\R^N$ the suprema in \eqref{eqOptimalMLbound}  can be taken for a countable dense subset of $K$ leading to the same actual definition, which grants that the functions defined in \eqref{eqOptimalMLbound} are measurable.
\end{defn}
\begin{defn}\label{def:SC}
A function $f\colon\R\times\R^N\to \R^M$ is said to be \emph{strong Carath\'eodory}, and we will write $f\in \SC(\R^M)$ (or simply $f\in\SC$ when $M=N$), if it satisfies (C1), (C2) and
\begin{itemize}
\item[(S)] for almost every $t\in\R$, the function $f(t,\cdot)$ is continuous.
\end{itemize}
The concept of \emph{optimal $m$-bound} for a strong  Carath\'eodory function on any compact set $K\subset\R^N$, is defined exactly as in equation \eqref{eqOptimalMLbound}.
\end{defn}
Functions which are not necessarily continuous in the second variable are also considered. First, we set some notation.
\begin{defn}
We call  \emph{a suitable set of moduli of continuity}, any countable  set of non-decreasing continuous functions
\begin{equation*}
\Theta=\left\{\theta^I_j \in C(\R^+, \R^+)\mid j\in\N, \ I=[q_1,q_2], \ q_1,q_2\in\Q\right\}
\end{equation*}
such that $\theta^I_j(0)=0$ for every $\theta^I_j\in\Theta$, and  with the relation of partial order given~by
\begin{equation*}\label{def:modCont}
\theta^{I_1}_{j_1}\le\theta^{I_2}_{j_2}\quad \text{whenever } I_1\subseteq I_2 \text{ and } j_1\le j_2 \, .
\end{equation*}
\end{defn}
Now we introduce the family of sets $\TC(\R^M)$ and $\WTC(\R^M)$, where $\Theta$ is a suitable set of moduli of continuity.
\begin{defn}\label{def:TCnWTC}
Let   $\Theta$ be a suitable set of moduli of continuity, and $\K_j^I$ the set of functions in $C(I,B_j)$ which admit $\theta^I_j$ as modulus of continuity.
\begin{itemize}
\item
We say that $f$ is \emph{$\Theta$\nbd-Carath\'eodory} and write $f\in \TC(\R^M)$ (or simply $f\in\TC$ when $M=N$), if $f$ satisfies  (C1), (C2), and for each $j\in\N$ and $I=[q_1,q_2], \ q_1,q_2\in\Q$ one has \vspace{0.2cm}
\begin{itemize}
\item[(T)] if $\big(x_n(\cdot)\big)_{\nin}$ is a sequence in $\K_j^I$  uniformly converging to $x(\cdot)\in\K_j^I$, then
\begin{equation*}
\qquad\qquad\lim_{\nti}\int_I\big|f\big(t,x_n(t)\big)-f\big(t,x(t)\big)\big|dt=0.
\label{T}
\end{equation*}
\end{itemize}
\item
We say that $f$ is \emph{weak $\Theta$\nbd-Carath\'eodory}, and write $f\in \WTC(\R^M)$ (or simply $f\in\WTC$ when $M=N$), if $f$ satisfies (C1), (C2) and\vspace{0.2cm}
\begin{itemize}
\item[(W)] for each $j\in\N$ and $I=[q_1,q_2], \ q_1,q_2\in\Q$, if $\big(x_n(\cdot)\big)_{\nin}$ is a sequence in $\K_j^I$  uniformly converging to $x(\cdot)\in\K_j^I$, then \vspace{-.025cm}
\begin{equation}
\qquad\qquad \lim_{\nti}\int_If\big(t,x_n(t)\big)\, dt=\int_If\big(t,x(t)\big)\, dt.
\label{WT}
\end{equation}
\end{itemize}
\end{itemize}
\end{defn}
As regards Definitions \ref{def:LC}, \ref{def:SC} and \ref{def:TCnWTC}, we identify the functions which lay in the same set and only differ on a negligible subset of  $\R^{1+N}$, following the same reasoning presented in \cite{paper:LNO}. The constraint about belonging to the same set is crucial. Indeed, without any additional constraint, a function in $\SC(\R^M)$ could actually be identified with a function which is not in $\SC(\R^M)$. Furthermore, such identifications imply that $\LC(\R^M)\subset\SC(\R^M)$ and $\TC(\R^M)\subset\WTC(\R^M)$, but $\SC(\R^M)$  is not included in $\TC(\R^M)$. Nevertheless, a continuous injection, which is not a bijection, of $\SC(\R^M)$ in $\TC(\R^M)$ is straightforward.  Thus, the following chain can be sketched\vspace{-.025cm}
\begin{equation}
\LC(\R^M)\subset\SC(\R^M)\hookrightarrow\TC(\R^M)\subseteq\WTC(\R^M)\, ,
\label{eq:SPincl}
\vspace{-.025cm}
\end{equation}
where $\Theta$ is any suitable set of moduli of continuity.\par \vspace{.025cm}
The following result characterizes the process of identification in $\WTC(\R^M)$ and, as a consequence, implies that $\WTC(\R^M)$ is a metric space when endowed with the topology defined immediately after. We skip the proof because it presents minor changes with respect to the one of Proposition 2.6 in  \cite{paper:LNO}.
\begin{prop}
Let $f,g\in\WTC(\R^M)$ coincide almost everywhere in $\R\times\R^N$. Then, for any $\K_j^I$ as in \rm{Definition~\ref{def:TCnWTC}}, we have that
\begin{equation*}\label{prop:coincideAE=>CoincideOnTheContinuous}
\forall\, x(\cdot)\in\K_j^I\ :\ f\big(t,x(t)\big)=g\big(t,x(t)\big) \text{ for a.e. } t\in I\, .
\end{equation*}
\end{prop}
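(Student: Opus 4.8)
The plan is to set $u=f-g$, which again satisfies condition (W) by linearity (so the integral-convergence property is inherited from $f$ and $g$) and which vanishes almost everywhere on $\R\times\R^N$; the assertion then reads $u\big(t,x(t)\big)=0$ for a.e.\ $t\in I$, for every $x(\cdot)\in\K_j^I$, and it suffices to argue componentwise, so I may take $M=1$. The difficulty is that the graph $\{(t,x(t))\mid t\in I\}$ is Lebesgue\nbd-null in $\R\times\R^N$, so the hypothesis $u=0$ a.e.\ cannot be used along it directly. To circumvent this I would perturb the path by constants. Fixing $x(\cdot)\in\K_j^I$ and considering the shear $\Phi(t,v)=(t,x(t)+v)$ on $I\times\R^N$, one sees that $\Phi$ is a homeomorphism acting fibrewise by translation, hence it preserves Lebesgue measure and pulls back the null set $\{u\neq 0\}$ to a null set. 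By Fubini's theorem the set $G$ of displacements $v\in\R^N$ with $u\big(t,x(t)+v\big)=0$ for a.e.\ $t\in I$ has full measure; in particular there is a sequence $v_n\in G$ with $v_n\to 0$.

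Next I would feed these admissible translates into condition (W). The paths $x_n(\cdot)=x(\cdot)+v_n$ converge uniformly to $x(\cdot)$, share its modulus of continuity, and for $|v_n|\le 1$ take values in $B_{j+1}$; since $\theta^I_j\le\theta^I_{j+1}$ one checks $x_n(\cdot),x(\cdot)\in\K_{j+1}^I$. As $u\big(t,x_n(t)\big)=0$ for a.e.\ $t$, condition (W) applied to $u$ yields $\int_I u\big(t,x(t)\big)\,dt=\lim_{\nti}\int_I u\big(t,x_n(t)\big)\,dt=0$. Running the same argument over an arbitrary rational interval shows, more precisely, that $v\mapsto\int_{I'} u\big(t,x(t)+v\big)\,dt$ is continuous and vanishes on the full-measure set $G$, hence vanishes at $v=0$: the path-integral of $u$ is zero on every rational interval against which $x(\cdot)$ is an admissible test path.

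The decisive step is to promote this family of vanishing integrals to the pointwise statement. Writing $\phi(t)=u\big(t,x(t)\big)$, which belongs to $L^1(I)$ because $|\phi(t)|\le m^{B_j}(t)$, I would show $\int_J\phi=0$ for every subinterval $J\subseteq I$ and then invoke the Lebesgue differentiation theorem to conclude $\phi=0$ a.e.\ on $I$. For a rational subinterval $J$ the identities $u\big(t,x(t)+v_n\big)=0$ continue to hold a.e.\ on $J$, so the conclusion would follow as soon as condition (W) can be applied on $J$ to the uniformly convergent translates.

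I expect this localization to be the main obstacle, and it is exactly where the weak setting departs from the strong one. In \cite{paper:LNO} the analogue is immediate: condition (T) gives $L^1(I)$\nbd-convergence of $f\big(\cdot,x_n(\cdot)\big)$ and $g\big(\cdot,x_n(\cdot)\big)$, so $f\big(\cdot,x(\cdot)\big)$ and $g\big(\cdot,x(\cdot)\big)$ are $L^1$\nbd-limits of one and the same sequence and coincide a.e.\ with no recourse to subintervals. Under (W) only the integrals converge, so one genuinely has to restrict to $J$, and the delicate point is to guarantee that the restricted path remains admissible on $J$; this is handled through the partial\nbd-order structure of the suitable set of moduli $\Theta$ and the attendant bookkeeping of the $\theta^I_j$, rather than by any new analytic idea, which is precisely the sense in which the proof differs only by minor changes from that of Proposition~2.6 in \cite{paper:LNO}.
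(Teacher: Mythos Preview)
The paper omits the proof entirely, deferring to Proposition~2.6 of \cite{paper:LNO}, so there is no in-paper argument to compare against directly. Your outline---the Fubini/shear argument producing a full-measure set of constant displacements $v$ along which $u(\cdot,x(\cdot)+v)$ vanishes a.e., followed by (W) to pass to the limit $v\to0$---is the correct adaptation of the strong-topology proof, and everything up to the conclusion $\int_I u(t,x(t))\,dt=0$ is sound.

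The gap is in the localization step, and your proposed resolution does not work as stated. You claim that the restricted path ``remains admissible on $J$'' and that this ``is handled through the partial-order structure'' of $\Theta$; but the partial order points the wrong way. For $J\subsetneq I$ the definition gives $\theta_{j'}^J\le\theta_{j'}^I$ for every $j'$, so a path with modulus $\theta_j^I$ on $I$ has no reason to have the (possibly finer) modulus $\theta_{j'}^J$ on $J$ for any $j'$, and in general $x|_J\notin\K_{j'}^J$. The fix is not restriction but \emph{extension by constants}: given rational $J=[p_1,p_2]\subset I=[q_1,q_2]$, replace $x_n$ (and $x$) on $I\setminus J$ by the constant values $x_n(p_1)$, $x_n(p_2)$ to obtain functions still in $\K_{j+1}^I$, and decompose
\[
\int_{p_1}^{p_2} = \int_{q_1}^{q_2} - \int_{q_1}^{p_1} - \int_{p_2}^{q_2},
\]
where the two boundary terms are integrals along constant paths, which lie in every $\K_{j'}^{I'}$. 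Each piece then converges by (W). This is precisely Lemma~\ref{lem:conv-subint}(i), proved immediately after the proposition and logically independent of it; with that lemma in hand your argument closes: $\int_J u(t,x(t))\,dt=0$ for every rational $J\subseteq I$, hence $u(\cdot,x(\cdot))=0$ a.e.\ by Lebesgue differentiation.
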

We endow the previously introduced sets with suitable topologies. As a rule, when inducing a topology on a subspace we will denote the induced topology with the same symbol which denotes the topology on the original space. The space $\WTC(\R^M)$ will be endowed with the following topology.
\begin{defn}
Let $\Theta$ be a suitable set of moduli of continuity. We call $\sigma_{\Theta}$ the topology on $\WTC(\R^M)$ generated by the family of seminorms \vspace{-.025cm}
\begin{equation*}
p_{I,\, j}(f)=\sup_{x(\cdot)\in\K_j^I}\left|\,\int_If\big(t,x(t)\big)\,dt\,\right| ,\quad f\in\WTC(\R^M)\, ,
\vspace{-.025cm}
\end{equation*}
with $I=[q_1,q_2]$, $q_1,q_2\in\Q$, $j\in\N$, and $\K_j^I$  as in Definition~\ref{def:TCnWTC}.
$\left(\WTC(\R^M),\sigma_{\Theta}\right)$ is a locally convex metric space.
\end{defn}
On $\SC(\R^M)$, we introduce the following topology
\begin{defn}\label{def:SigmaD}
Let $D$ be a countable and dense subset of $\R^N$. We call $\sigma_{D}$ the topology on $\SC(\R^M)$ generated by the family of seminorms
\begin{equation*}
p_{I,\, x}(f)=\left|\,\int_If(t,x)\, dt\,\right|,\quad f\in\SC(\R^M), \, x\in D,\, I=[q_1,q_2],\,  q_1,q_2\in\Q\, .
\end{equation*}
$\left(\SC(\R^M),\sigma_{D}\right)$ is a  locally convex metric space.
\end{defn}
Notice that $\SC(\R^M)$ and $\LC(\R^M)$ can be endowed with both previous topologies and the following chain of order holds
\begin{equation}
\sigma_D\le\sigma_{\Theta}\, .
\label{eq:TopIncl}
\end{equation}
We also recall two strong topologies for the spaces $\TC$ and $\SC$ respectively.
\begin{defn}
Let $\Theta$ be a suitable set of moduli of continuity. We call $\T_{\Theta}$ the topology on $\TC(\R^M)$ generated by the family of seminorms
\begin{equation*}
p_{I,\, j}(f)=\sup_{x(\cdot)\in\K_j^I}\int_I\big|f\big(t,x(t)\big)\big|\,dt ,\quad f\in\TC(\R^M)\, ,
\end{equation*}
with $I=[q_1,q_2]$, $q_1,q_2\in\Q$, $j\in\N$, and $\K_j^I$  as in Definition~\ref{def:TCnWTC}.
$\left(\TC(\R^M),\T_{\Theta}\right)$ is a locally convex metric space.
\end{defn}
\begin{defn}\label{def:TD}
Let $D$ be a countable and dense subset of $\R^N$. We call $\T_{D}$ the topology on $\SC(\R^M)$ generated by the family of seminorms
\begin{equation*}
p_{I,\, x}(f)=\int_I|\,f(t,x)\,|\,dt,\quad f\in\SC(\R^M), \, x\in D,\, I=[q_1,q_2],\,  q_1,q_2\in\Q\, .
\end{equation*}
$\left(\SC(\R^M),\T_{D}\right)$ is a  locally convex metric space.
\end{defn}
Notice, once again, that the space $\LC$ can be endowed with both $\T_\Theta$ and $\T_D$ and also that $\sigma_{\Theta}\le\T_\Theta$.
\par
Finally, we recall the notions of $L^1_{loc}$-equicontinuity and $L^1_{loc}$-boundedness and prove some results on the previously outlined topological spaces once such properties are assumed to hold.
A subset $S$ of positive functions in $L^1_{loc}$  is bounded if for every $r>0$ the following inequality holds
\begin{equation*}
\sup_{m\in S}\int_{-r}^r m(t)\,  dt<\infty\,.
\end{equation*}
In such a case we will say that $S$ is $L^1_{loc}$-bounded.
\begin{defn}\label{weakcomp}
A set $S$ of positive functions in $L^1_{loc}$ \emph{is $L^1_{loc}$-equicontinuous} if for any $r>0$ and for any $\ep>0$ there exists a $\delta=\delta(r,\ep)>0$ such that, for any $-r\le s\le t\le r$, with $t-s<\delta$, the following inequality holds
\begin{equation*}
\sup_{m\in S}\int_{s}^t m(u)\,du<\ep\, .
\end{equation*}
\end{defn}
\begin{rmk}\label{rmk:equicnt=>bound}
Notice that the $L^1_{loc}$-equicontinuity implies the $L^1_{loc}$-boundedness.
\end{rmk}
The following definition extends the previous notions to sets of Carath\'eodory functions through their $m$-bounds and/or $l$-bounds. By time translation at time $t$ of a function $f$ we mean the application $f_t\colon\R\times\R^N\to\R^M$ defined by $(s,x)\mapsto f_t(s,x)=f(s+t,x)$.
\begin{defn}
We say that
\begin{itemize}
\item[(i)] a set $E\subset\C(\R^M)$ \emph{has $L^1_{loc}$-bounded (resp. $L^1_{loc}$-equicontinuous)} $m$-bounds, if for any $j\in\N$ there exists a set  $S^j\subset L^1_{loc}$ of $m$-bounds of the functions of $E$ on $B_j$, such that $S^j$ is $L^1_{loc}$\nbd-bounded (resp. $L^1_{loc}$-equicontinuous);
\item[(ii)] \emph{$f\in\C(\R^M)$ has $L^1_{loc}$-bounded (resp. $L^1_{loc}$-equicontinuous) $m$-bounds} if the set $\{f_t\mid t\in\R\}$ admits  $L^1_{loc}$-bounded (resp. $L^1_{loc}$-equicontinuous) $m$-bounds;
\item[(iii)]a set $E\subset\LC(\R^M)$  \emph{has  $L^1_{loc}$-bounded  (resp. $L^1_{loc}$-equicontinuous)} $l$\nbd-bounds, if for any $j\in\N$, the set $S^j\subset L^1_{loc}$, made up of the optimal $l$-bounds on $B_j$  of the functions in $E$,  is  $L^1_{loc}$-bounded  (resp. $L^1_{loc}$-equicontinuous);
\item[(iv)] $f\in\LC(\R^M)$  \emph{has $L^1_{loc}$-bounded  (resp. $L^1_{loc}$-equicontinuous) $l$-bounds} if  the set $\{f_t\mid t\in\R\}$  has $L^1_{loc}$-bounded (resp. $L^1_{loc}$-equicontinuous) $l$-bounds.
\end{itemize}
\label{def:05.07-13:05}
\end{defn}
\subsection{First topological results}\label{firtop}
Some preliminary information on the previously outlined topological spaces is given. In particular, we also generalize some of the results given in \cite[Section 4]{paper:LNO}. Firstly, notice that, if a function $x(\cdot)$ belongs to the set $\K_j^I$ given in definition~\ref{def:TCnWTC}, and we take $p_1$, $p_2\in\Q$ such that $J=[p_1,p_2]\subset I$, then $x(\cdot)$ does not necessarily belong to $\K_j^J$. Thus, the next technical lemma is needed.
\begin{lem}\label{lem:conv-subint}
Let $\Theta$ be a suitable set of moduli of continuity.
\begin{itemize}
\item[(i)] Let $f$ be a function of $\,\WTC(\R^M)$. For each $j\in\N$ and  $I=[q_1,q_2]$, $q_1,q_2\in\Q$, if $\big(x_n(\cdot)\big)_{\nin}$ is a sequence in $\K_j^I$  uniformly converging to $x(\cdot)\in\K_j^I$, then
\begin{equation*}
\lim_{\nti}\int_{p_1}^{p_2} f\big(t,x_n(t)\big)\, dt=\int_{p_1}^{p_2} f\big(t,x(t)\big)\, dt.
\end{equation*}
whenever $p_1$, $p_2\in \Q$ and $q_1\le p_1 < p_2\le q_2$. \smallskip
\item[(ii)]  Let $(g_n)_\nin$ be a sequence in $\WTC(\R^M)$ converging to a function $g$ in $\left(\WTC(\R^M),\sigma_\Theta\right)$. Then, for each $I=[q_1,q_2]$, $q_1,q_2\in\Q$ and $j\in\N$
\[ \lim_{n\to\infty}\sup_{x(\cdot)\in\K_j^I}\left| \int_{p_1}^{p_2} \big[g_n\big(t,x(t)\big)- g\big(t,x(t)\big)\big]\,dt\right|=0\]
whenever $p_1$, $p_2\in \Q$ and $q_1\le p_1 < p_2\le q_2$.
\end{itemize}
\end{lem}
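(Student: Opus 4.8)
The plan is to reduce the claimed convergences over the sub-interval $J=[p_1,p_2]$ to the defining integral property (W) over the full interval $I=[q_1,q_2]$, where they are available by hypothesis. The obstruction, already flagged before the lemma, is that restricting a function $x(\cdot)\in\K_j^I$ to $J$ need not produce an element of $\K_j^J$: the restriction only inherits $\theta^I_j$ as a modulus of continuity, and since $\theta^J_j\le\theta^I_j$ (by the partial order on $\Theta$, as $J\subseteq I$) this is in general too coarse to guarantee membership in $\K_j^J$. Hence one cannot simply invoke (W) on $J$.

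To circumvent this I would introduce, for any $x(\cdot)\in\K_j^I$, its \emph{freezing} $\tilde x(\cdot)\in C(I,B_j)$, defined to equal the constant $x(p_1)$ on $[q_1,p_1]$, to coincide with $x(\cdot)$ on $J$, and to equal the constant $x(p_2)$ on $[p_2,q_2]$. A direct check, using that $\theta^I_j$ is non-decreasing with $\theta^I_j(0)=0$, shows that $\tilde x(\cdot)$ still admits $\theta^I_j$ as modulus of continuity, so $\tilde x(\cdot)\in\K_j^I$. The purpose of this device is the exact splitting
\[\int_{p_1}^{p_2}\phi\big(t,x(t)\big)\,dt=\int_I\phi\big(t,\tilde x(t)\big)\,dt-\int_{q_1}^{p_1}\phi\big(t,x(p_1)\big)\,dt-\int_{p_2}^{q_2}\phi\big(t,x(p_2)\big)\,dt,\]
valid for any integrand $\phi$, in which the two correction terms involve only the \emph{constant} profiles $x(p_1),x(p_2)\in B_j$; as constants have vanishing modulus of continuity, they belong to $\K_j^{[q_1,p_1]}$ and $\K_j^{[p_2,q_2]}$ respectively. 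When $p_1=q_1$ or $p_2=q_2$ the corresponding correction term is absent and the argument only simplifies.

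For part (i) I would apply the splitting with $\phi=f$. Since $x_n\to x$ uniformly on $I$, the frozen profiles satisfy $\tilde x_n\to\tilde x$ uniformly on $I$, and $x_n(p_i)\to x(p_i)$ in $B_j$ for $i=1,2$. Property (W) on $I$ gives convergence of $\int_I f\big(t,\tilde x_n(t)\big)\,dt$ to $\int_I f\big(t,\tilde x(t)\big)\,dt$, while (W) applied on the rational sub-intervals $[q_1,p_1]$ and $[p_2,q_2]$ to the (uniformly convergent) constant sequences $t\mapsto x_n(p_i)$ gives convergence of the two correction integrals. Adding the three limits recovers $\int_{p_1}^{p_2}f\big(t,x(t)\big)\,dt$, which proves (i).

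For part (ii) I would set $h_n:=g_n-g$, which lies in $\WTC(\R^M)$ because property (W) is preserved under linear combinations, and recall that $g_n\to g$ in $\sigma_\Theta$ means $p_{I',j}(h_n)\to0$ for every generating seminorm. Applying the splitting with $\phi=h_n$, taking absolute values and then the supremum over $x(\cdot)\in\K_j^I$, the first term is bounded by $p_{I,j}(h_n)$ since $\tilde x(\cdot)\in\K_j^I$, and each correction term is bounded, uniformly in $x(\cdot)$, by $p_{[q_1,p_1],j}(h_n)$ and $p_{[p_2,q_2],j}(h_n)$ respectively, because the frozen profiles are constants lying in $B_j$. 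All three seminorms have rational endpoints, hence belong to the family defining $\sigma_\Theta$ and tend to $0$; the claimed uniform convergence follows. The only genuine difficulty is the modulus-of-continuity mismatch identified above, and once the freezing device is in place both parts reduce to convergences already granted over rational intervals.
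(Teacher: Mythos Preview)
Your proposal is correct and follows essentially the same approach as the paper: both introduce the ``freezing'' extension of $x(\cdot)$ by constants outside $[p_1,p_2]$, verify it stays in $\K_j^I$, and use the resulting three-term decomposition together with property (W) on $I$, $[q_1,p_1]$, and $[p_2,q_2]$. Your treatment of part (ii) via $h_n=g_n-g$ and the seminorm bounds is slightly more explicit than the paper's, which simply notes that the argument is analogous using the definition of $\sigma_\Theta$-convergence.
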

\begin{proof} First, for every $\nin$, we define $\widehat x_n\colon I\to B_j$ by
$\widehat x_n(t)=x_n(p_1)$ if  $t\in [q_1,p_1]$, $\widehat x_n(t)=x_n(t)$ if $t\in [p_1,p_2]$, and $\widehat x_n(t)=x_n(p_2)$ if $t\in [p_2,q_2]$.
 Notice that for all $\nin$ one can write
 \begin{equation}\label{eq:decom}
 \int_{p_1}^{p_2}\!\!\!\! f\big(t,x_n(t)\big)\,dt= \int_{q_1}^{q_2} \!\!\!\!f\big(t,\widehat x_n(t)\big)\,dt
 -\int_{q_1}^{p_1} \!\!\!\!f\big(t,x_n(p_1)\big)\,dt -\int_{p_2}^{q_2}\!\!\!\! f\big(s,x_n(p_2)\big)\,dt\,.
 \end{equation}
 The sequence $\big(\widehat x_n(\cdot)\big)_{\nin}$ is in $\K_j^I$ and converges uniformly to the function $\widehat x\colon I\to \R^N$ defined by $\widehat x(t)=x(p_1)$ if   $t\in [q_1,p_1]$, $\widehat x(t)= x(t)$ if $t\in [p_1,p_2]$ and $\widehat x(t)= x(p_2)$ if   $t\in [p_2,q_2]$. Moreover, the sequences of constant functions  $\big(x_n(p_1)\big)_{\nin}$ and $\big(x_n(p_2)\big)_{\nin}$   respectively belong to $\K_j^{[q_1,p_1]}$ and $\K_j^{[p_2,q_2]}$, and converge to the constant functions given by $x(p_1)$ and $x(p_2)$ respectively. Therefore, statement (i) follows from~\eqref{eq:decom} and \eqref{WT}. The proof of (ii) is obtained through similar reasonings and using the definition of convergence in $\left(\WTC(\R^M),\sigma_\Theta\right)$.
\end{proof}
A technical Lemma, proving that any time translation of a function in $\WTC(\R^M)$ is still a function in $\WTC(\R^M)$, is also necessary.
\begin{lem}\label{lem:flow-well-defined}
Let $\Theta$ be a suitable set of moduli of continuity. If $t\in\R$ and $f\in\WTC(\R^M)$, then $f_t\in\WTC(\R^M)$.
\end{lem}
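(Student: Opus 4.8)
The plan is to verify the three defining conditions (C1), (C2) and (W) for $f_t$. Conditions (C1) and (C2) are routine. Since the map $(s,x)\mapsto(s+t,x)$ is continuous, hence Borel, $f_t$ inherits Borel measurability from $f$; and if $m^K$ is an $m$-bound of $f$ on a compact set $K\subset\R^N$, then $s\mapsto m^K(s+t)$ bounds $|f_t(\cdot,x)|$ on $K$ for a.e.\ $s$ and still lies in $L^1_{loc}$, because time translation preserves local integrability. So the whole difficulty lies in (W).

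Fix $j\in\N$, a rational interval $I=[q_1,q_2]$, and a sequence $\big(x_n(\cdot)\big)_\nin$ in $\K_j^I$ converging uniformly to $x(\cdot)\in\K_j^I$; I must show $\int_I f_t(s,x_n(s))\,ds\to\int_I f_t(s,x(s))\,ds$. The natural first step is the change of variable $u=s+t$, which rewrites these as integrals over $J=[q_1+t,q_2+t]$ of $f(u,y_n(u))$, where $y_n(u)=x_n(u-t)$ converges uniformly to $y(u)=x(u-t)$. Since translating the time variable does not alter the modulus of continuity, each $y_n$ still admits $\theta^I_j$ as modulus of continuity on $J$. The obstruction is now visible and is the main obstacle of the proof: $J$ in general has \emph{irrational} endpoints, so condition (W) for $f$, stated only over rational intervals, cannot be applied directly on $J$.

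To circumvent this I would enlarge the interval, mimicking the proof of Lemma~\ref{lem:conv-subint}. Choose $r_1,r_2\in\Q$ with $[r_1,r_2]\supseteq I\cup J$ and extend each $y_n$ to $\widehat y_n\colon[r_1,r_2]\to B_j$ by the constants $x_n(q_1)$ on $[r_1,q_1+t]$ and $x_n(q_2)$ on $[q_2+t,r_2]$. Extension by constants preserves the modulus $\theta^I_j$, and since $I\subseteq[r_1,r_2]$ the partial order on $\Theta$ gives $\theta^I_j\le\theta^{[r_1,r_2]}_j$, so $\widehat y_n\in\K_j^{[r_1,r_2]}$ and $\widehat y_n\to\widehat y$ uniformly there. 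Applying \eqref{WT} to $f$ on the rational interval $[r_1,r_2]$ yields convergence of $\int_{r_1}^{r_2} f(u,\widehat y_n(u))\,du$, and the analogue of the decomposition \eqref{eq:decom} expresses $\int_J f(u,y_n(u))\,du$ as this convergent integral minus the two boundary pieces $\int_{r_1}^{q_1+t} f(u,x_n(q_1))\,du$ and $\int_{q_2+t}^{r_2} f(u,x_n(q_2))\,du$.

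It remains to show these two boundary pieces converge, and here the irrational endpoints $q_1+t$, $q_2+t$ reappear. I would treat them by a squeeze argument that is uniform in $n$: the common $m$-bound $m^{B_j}$ gives $\big|\int_{p}^{q_1+t} f(u,x_n(q_1))\,du\big|\le\int_p^{q_1+t} m^{B_j}(u)\,du$ independently of $n$, and this tail tends to $0$ as a rational $p\uparrow q_1+t$ by absolute continuity of the integral of $m^{B_j}\in L^1_{loc}$. Splitting $\int_{r_1}^{q_1+t}=\int_{r_1}^{p}+\int_{p}^{q_1+t}$ and using that the constants $x_n(q_1)\to x(q_1)$ give convergence on the rational interval $[r_1,p]$ (by Lemma~\ref{lem:conv-subint}(i) applied to constant functions), a standard $\ep/3$ estimate shows $\int_{r_1}^{q_1+t} f(u,x_n(q_1))\,du\to\int_{r_1}^{q_1+t} f(u,x(q_1))\,du$, and symmetrically for the other piece. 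Subtracting the two convergent boundary terms from the convergent integral over $[r_1,r_2]$ gives convergence of $\int_J f(u,y_n(u))\,du$, and undoing the change of variable yields (W) for $f_t$, completing the proof.
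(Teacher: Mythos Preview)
Your proof is correct and follows essentially the same approach as the paper's: change of variables to $I+t$, extension by constants to a larger rational interval, and an $m$-bound squeeze to handle the irrational endpoints $q_i+t$. The paper's organization is marginally more direct---it enlarges $[q_1+t,q_2+t]$ once to a nearby rational interval $[p_1,p_2]$ with $\int_{p_1}^{q_1+t}m^j_f,\ \int_{q_2+t}^{p_2}m^j_f<\ep/4$ and applies Lemma~\ref{lem:conv-subint}(i) there, rather than passing to the full $[r_1,r_2]$ and then separately approximating within each boundary piece---but the substance is identical.
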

\begin{proof}
Firstly, notice that for any fixed $f\in\WTC(\R^M)$ and $t\in\R$, the function $f_t$ trivially satisfies (C1) and (C2). In order to prove that condition (W) of Definition \ref{def:TCnWTC} holds, consider $j\in\N$, $I=[q_1,q_2], \ q_1,q_2\in\Q$, and  a sequence $\big(x_n(\cdot)\big)_{\nin}$ in $\K_j^I$   converging uniformly to $x(\cdot)\in\K_j^I$. Then, one has that
\begin{align}
\lim_{\nti}&\left|\int_I\big[f_t\big(s,x_n(s)\big)-f_t\big(s,x(s)\big)\big]\, ds\,\right|\nonumber\\
&\qquad\qquad=\lim_{\nti}\left|\int_I\big[f\big(s+t,x_n(s)\big)-f\big(s+t,x(s)\big)\big]\, ds\,\right|\nonumber\\
&\qquad\qquad=\lim_{\nti}\left|\int_{I+t}\big[f\big(u,x_n(u-t)\big)-f\big(u,x(u-t)\big)\big]\, du\,\right|.
\label{eq:27-06_18:49}
\end{align}
Considering an interval $J$ with rational extremes such that $I\cup(I+t)\subset J$ and, up to an extension by constants to $J$, the functions $x_n(\cdot\,-t)$ and $x(\cdot\,-t)$ are in $\K^J_j$. If $t\in\Q$ we immediately obtain the thesis thanks to Lemma \ref{lem:conv-subint}(i). If $t\in\R$, fix $\ep>0$ and let $\delta_1,\delta_2>0$ be such that
\begin{equation*}
\int_{q_1+t-\delta_1}^{q_1+t}m^j_f(u)\, du<\frac{\ep}{4}\qquad \text{and}\qquad \int_{q_2+t}^{q_2+t+\delta_2}m^j_f(u)\, du<\frac{\ep}{4}\,,
\end{equation*}
where $m^j_f(\cdot)$ is the $m$-bound of $f$ on $B_j$. The previous inequalities hold because of the continuity of the integral. Thus, denoted by $\delta=\min\{\delta_1,\delta_2\}$, consider $p_1\in[q_1+t-\delta,\, q_1+t]\cap\Q$ and $p_2\in[q_2+t,\, q_2+t+\delta]\cap\Q$. Starting from the last member of the chain of equalities in \eqref{eq:27-06_18:49}, one has
\begin{equation*}
\begin{split}
\lim_{\nti}\bigg|\int_{q_1+t}^{q_2+t}\big[&f\big(u,x_n(u-t)\big)-f\big(u,x(u-t)\big)\big]\, du\,\bigg|\\
&\le \lim_{\nti}\left|\int_{p_1}^{p_2}\big[f\big(u,x_n(u-t)\big)-f\big(u,x(u-t)\big)\big]\, du\,\right|\\
&\qquad\qquad+2\int_{p_1}^{q_1+t}m^j_f(u)\, du+2\int_{q_2+t}^{p_2}m^j_f(u)\, du\\
&\le\lim_{\nti}\left|\int_{p_1}^{p_2}\big[f\big(u,x_n(u-t)\big)-f\big(u,x(u-t)\big)\big]\, du\,\right|+\ep\,.
\end{split}
\end{equation*}
Therefore, we obtain the thesis, thanks to Lemma \ref{lem:conv-subint}(i), putting together the previous chain of inequalities and \eqref{eq:27-06_18:49}.
\end{proof}
\begin{prop}
Let $\Theta$ be a suitable set of moduli of continuity and $\sigma_\Theta$ the topology defined as in \rm{Definition \ref{def:TCnWTC}}. The following statements hold
\begin{itemize}
\item[(i)] If $E\subset \WTC(\R^M)$ (resp. $E\subset\LC(\R^M)$) admits $L^1_{loc}$-equicontinuous $m$-bounds  (resp. $L^1_{loc}$-equicontinuous  $l$-bounds), then $\mathrm{cls}_{(\WTC(\R^M),\sigma_\Theta)}(E)$ has $L^1_{loc}$-equicontinuous $m$-bounds  (resp.  $L^1_{loc}$-equicontinuous $l$-bounds).
\item[(ii)] If $E\subset \WTC(\R^M)$ (resp. $E\subset\LC(\R^M)$) admits $L^1_{loc}$-bounded $m$-bounds  (resp. $L^1_{loc}$-bounded $l$-bounds)  then $\mathrm{cls}_{(\WTC(\R^M),\sigma_\Theta)}(E)$ has $L^1_{loc}$-bounded $m$-bounds  (resp. $L^1_{loc}$-bounded $l$-bounds).
 \end{itemize}
 \label{prop:analog4.10paper}
\end{prop}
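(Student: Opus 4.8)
The plan is the following. Fix $j\in\N$ and recall that, by hypothesis and Definition~\ref{def:05.07-13:05}, there is an $L^1_{loc}$-equicontinuous (resp.\ $L^1_{loc}$-bounded) set $S^j\subset L^1_{loc}$ of positive $m$-bounds on $B_j$ covering $E$. Given $f$ in the closure, I would use that $(\WTC(\R^M),\sigma_\Theta)$ is metric to pick a sequence $(f_n)_\nin$ in $E$ with $f_n\to f$, and for each $n$ choose $m_n\in S^j$ with $|f_n(u,x)|\le m_n(u)$ for a.e.\ $u$ and all $x\in B_j$. Since $S^j$ is in particular $L^1_{loc}$-bounded (Remark~\ref{rmk:equicnt=>bound}), the measures $m_n\,du$ have uniformly bounded mass on each $[-r,r]$, so, after passing to a subsequence and diagonalizing in $r\in\N$, they converge weakly-$*$ to a Radon measure $\mu$; write $\mu=m\,du+\mu_s$ with $m\in L^1_{loc}$ its absolutely continuous density. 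The candidate $m$-bound for $f$ on $B_j$ will be $m$.

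The core of the argument is to certify that $m$ really dominates $f$. For a fixed $x\in B_j$ the constant curve belongs to every $\K_j^I$, so Lemma~\ref{lem:conv-subint} gives $\int_J f_n(\cdot,x)\,du\to\int_J f(\cdot,x)\,du$ for every rational subinterval $J$; comparing the vector measures $f_n(\cdot,x)\,du$, whose variation is $\le m_n\,du$, and letting $n\to\infty$ shows that the absolutely continuous measure $|f(\cdot,x)|\,du$ is $\le\mu$, whence $|f(u,x)|\le m(u)$ for a.e.\ $u$ (the singular part $\mu_s$ has zero density a.e.). Running the same comparison along an \emph{arbitrary} $x(\cdot)\in\K_j^I$ yields $|f(u,x(u))|\le m(u)$ for a.e.\ $u\in I$. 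By Tonelli the pointwise-in-$x$ inequality forces $|f(u,x)|\le m(u)$ for a.e.\ $(u,x)$, so truncating $f$ on the null set $\{|f|>m\}$ gives a representative $\tilde f$ with $|\tilde f|\le m$ \emph{everywhere}; the along-every-curve bound guarantees $\int_I\tilde f(\cdot,x(\cdot))\,du=\int_I f(\cdot,x(\cdot))\,du$ for all $x(\cdot)\in\K_j^I$, so $\tilde f\in\WTC(\R^M)$ is identified with $f$ according to the characterization of the identification stated before. Thus $m$ is a genuine $m$-bound of $f$ on $B_j$.

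It remains to see that the constructed family $\{m_f\}$ inherits the uniform control. In case (ii) the uniform bound $\int_{-r}^r m_n\le\sup_{\eta\in S^j}\int_{-r}^r\eta(u)\,du$ passes to the limit, giving $\int_{-r}^r m\le\mu([-r,r])\le\sup_{\eta\in S^j}\int_{-(r+1)}^{\,r+1}\eta(u)\,du<\infty$ uniformly in $f$, so $\{m_f\}$ is $L^1_{loc}$-bounded. In case (i) the $L^1_{loc}$-equicontinuity of $S^j$ is exactly uniform integrability of the $m_n$ on each compact interval, so by Dunford--Pettis the weak-$*$ limit is absolutely continuous and $m_n\rightharpoonup m$ in $L^1$; hence, for $[s,t]\subset[-r,r]$ with $t-s<\delta$, one gets $\int_s^t m=\lim_n\int_s^t m_n\le\sup_{\eta\in S^j}\int_s^t\eta(u)\,du<\ep$, and $\{m_f\}$ is $L^1_{loc}$-equicontinuous.

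Finally, the $l$-bound statements for $E\subset\LC(\R^M)$ follow the same scheme applied to the differences $f(u,x)-f(u,y)$ and the optimal $l$-bounds of \eqref{eqOptimalMLbound}: one controls $\int_J[f_n(\cdot,x(\cdot))-f_n(\cdot,y(\cdot))]\,du$ through $\sigma_\Theta$-convergence together with $|f_n(u,x(u))-f_n(u,y(u))|\le l_n(u)\,|x(u)-y(u)|$, where $l_n\in S^j$. Since $\LC(\R^M)$ functions are continuous in $x$, the domination obtained on a countable dense set of pairs $(x,y)$ extends to all pairs, so no truncation is needed (and the same control shows that the closure stays in $\LC(\R^M)$, with uniformly controlled Lipschitz constants). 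I expect the genuinely delicate point to be the upgrade, in the $\WTC(\R^M)$ setting, from the along-every-curve / pointwise-in-$x$ domination to an $m$-bound valid for \emph{all} $x\in B_j$: this is exactly where the weakness of $\sigma_\Theta$ and the absence of continuity in $x$ interact, and it is handled above only by combining Tonelli with the a.e.\ identification characterizing $\WTC(\R^M)$.
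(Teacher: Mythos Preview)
Your overall strategy matches the paper's: vague compactness of the $m$-bound measures, take the Lebesgue density $m$ of the limit as the candidate $m$-bound, verify domination via differentiation along constant and general curves in $\K^I_j$, and truncate to a representative. However, the Dunford--Pettis step contains a genuine error. $L^1_{loc}$-equicontinuity in the sense of Definition~\ref{weakcomp} controls only integrals over short \emph{intervals}; this is strictly weaker than uniform integrability, which requires control over arbitrary sets of small measure. Take $m_n=(3/2)^n\mathds 1_{C_n}$ with $C_n$ the $n$-th stage of the ternary Cantor set: the primitives $\int_0^{\,\cdot}m_n$ converge uniformly to the Cantor function, so $\{m_n\}$ is $L^1_{loc}$-equicontinuous, yet $\int_{C_n}m_n=1$ while $|C_n|\to0$, and the vague limit is the singular Cantor measure. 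So you cannot conclude $\mu_s=0$ or $\int_s^t m=\lim_n\int_s^t m_n$. The fix is the paper's: keep the full $\mu$, use only $\int_s^t m\le\mu([s,t])$, and bound the right side by $\int\phi\,d\mu=\lim_n\int\phi\,m_n\le\sup_{\eta\in S^{j+1}}\int_{s-\delta'}^{t+\delta'}\eta$ for a bump $\phi\in C_c^+$ with $\phi\equiv 1$ on $[s,t]$; this yields the equicontinuity of the new family directly, with no absolute-continuity claim needed.

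On the truncation, the paper also works harder than you indicate. A single global representative must be defined annulus-by-annulus, and your along-curve bound in $\K^I_j$ only gives $|f(u,x(u))|\le m^j(u)$, not the finer $|f(u,x(u))|\le m^{i(u)}(u)$ with $i(u)$ the annulus index of $x(u)$ that the truncation actually tests. The paper resolves this by localizing in time: near $t_0$ with $i\le|x(t_0)|<i+1$, continuity keeps $x$ inside $B_{i+1}$ on a short interval, one extends $x$ by constants to a curve still in $\K^I_j$ but with values in $B_{i+1}$, and applies the along-curve bound with $m^{i+1}$. This is why the paper's truncation uses the shifted threshold $m^{i+1}$ on $B_i\setminus B_{i-1}$, and it is exactly the ``delicate point'' you flag at the end.
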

\begin{proof} Consider $E\subset \WTC(\R^M)$ with $L^1_{loc}$-equicontinuous $m$-bounds, that is, for every $j\in\N$ there is a family of $m$-bounds for $E$, namely $S^j=\{m^j_f(\cdot)\mid f\in E, \, m^j_f(\cdot) \text{ $m$-bound for } f \text{ on } B_j\}$, satisfying the condition in Definition \ref{weakcomp}. Moreover, we will assume, by simplicity, that for every $j\in\N$, $m_f^j(t)\le m_f^{j+1}(t)$ for almost every $t\in\R$.
Let us denote by $\overline E=\mathrm{cls}_{(\WTC(\R^M),\sigma_\Theta)}(E)$, and, for any $g\in \overline E$, let $(g_n)_\nin$ be a sequence in $E$ converging to $g$ in $\left(\WTC(\R^M),\sigma_\Theta\right)$.
\par
Now, consider the topological space $(\M^+,\widetilde{\sigma})$, i.e. the space of  positive and regular Borel measures on $\R$ with the topology $\widetilde \sigma$ defined through convergence of sequences as follows; we say that  a sequence  $(\mu_n)_\nin$ of measures in $\M^+$ vaguely converges to $\mu\in\M^+$, and write $\mu_n\xrightarrow[]{\widetilde{\sigma}}\mu$, if and only if
\begin{equation*}
 \lim_{\nti}\int_\R \phi(s)\, d\mu_n(s)=\int_\R\phi(s)\, d\mu(s)\qquad \text{for each } \phi\in C^+_C(\R).
\end{equation*}
Then, fixed $j\in\N$, for every $\nin$, let $\mu^j_n\in\M^+$ be the positive absolutely continuous measure (with respect to Lebesgue measure) with density $m^j_{g_n}(\cdot)$. We recall that since $S^j$ is $L^1_{loc}$-equicontinuous, then it is in particular $L^1_{loc}$-bounded, which implies that $\{\mu^j_n\mid\nin\}$ is relatively compact in $(\M^+,\widetilde{\sigma})$ (see Kallenberg \cite[Theorem 15.7.5, p.170]{book:Kall}). Thus $(\mu^j_n)_\nin$ vaguely converges, up to a subsequence, to a measure $\mu^j\in\M^+$.
Moreover, by Lebesgue-Besicovitch differentiation theorem, there exists $m^j(\cdot)\in L^1_{loc}$ such that
\begin{equation*}
m^j(t)=\lim_{h\to0}\frac{\mu^j([t,t+h])}{h}\, , \qquad \mathrm{for\ a.e.} \ t\in\R\, ,
\label{eq:16.06-11:32}
\end{equation*}
and $m^j(\cdot)$ is the density of the absolutely continuous part of the Radon-Nikod\'ym decomposition of $\mu^j$ in each compact interval. We claim that $m^j(\cdot)$ is an $m$-bound for $g$ on $B_j$. Let us fix $x\in B_j$, $I=[q_1,q_2]$ with $q_1,q_2\in\Q$, and firstly assume that $t,h\in\Q$, with $h>0$ such that $t+h\in I$. If $\phi\in C_C^+(\R)$ is such that $\phi\equiv 1$ in $[t,t+h]$, then recalling that $g_n\xrightarrow[]{\sigma_\Theta}g$ and using Lemma \ref{lem:conv-subint}, we have
\begin{equation*}
\begin{split}
\left|\,\frac{1}{h}\int_{t}^{t+h}g(s,x)\, ds\,\right|&=\lim_{\nti}\left|\,\frac{1}{h}\int_{t}^{t+h}g_n(s,x)\, ds\,\right|\\
&\le \lim_{\nti}\frac{1}{h}\int_\R\phi(s)\, m_{g_n}^j(s)\, ds=\frac{1}{h}\int_\R\phi(s)\, d\mu^j(s)\,.
\end{split}
\end{equation*}
Moreover, by the regularity of the measure $\mu^j$ one has that
\begin{equation*}
 \mu^j\big([t,t+h]\big)=\inf\left\{\int_\R\phi(s)\,  d\mu^j(s)\;\Big|\; \phi\in C_C^+(\R),\;  \phi\equiv 1\; \text{in } [t,t+h]\right\}\,.
\end{equation*}
Hence, for any $t,h\in\Q$, with $h>0$, one has
\begin{equation}
\left|\,\frac{1}{h}\int_{t}^{t+h}g(s,x)\, ds\,\right|\le\frac{\mu^j([t,t+h])}{h}\,.
\label{eq:27-06_12:23}
\end{equation}
Now, consider $t,h\in\R$, with $h>0$ and $t+h\in I$, and let $(s_n)_\nin$ and $(t_n)_\nin$ be two sequences in $\Q$ such that, as $\nti$, $s_n\downarrow t$ and $t_n\uparrow t+h$, respectively. By \eqref{eq:27-06_12:23}, applied on the intervals $[s_n,t_n]$, and noticing that $\mu^j([s_n,t_n)]\le \mu^j([t,t+h])$ for every $\nin$, one can write
\begin{equation}
\left|\,\frac{1}{h}\int_{s_n}^{t_n}g(s,x)\, ds\,\right|\le\frac{\mu^j([t,t+h])}{h}\,,\quad\text{for all }\nin\,.
\label{eq:13-07_14:05}
\end{equation}
Therefore, passing to the limit as $\nti$ and using the continuity of the integral, one obtains \eqref{eq:27-06_12:23} for every $t,h\in\R$ with $h>0$ and $t+h\in I$.
As a further step, we take the limit as $h\to0$ and obtain that for every $x\in B_j$ there exists $I_x\subset I$ with $\meas(I\setminus I_x)=0$ such that for all $t \in I_x$ one has
\begin{equation}
|g(t,x)|\le m^j(t)\,.
\label{eq:16.06-11:45}
\end{equation}
From the arbitrariness of $I$, for any fixed $x\in B_j$ one obtains \eqref{eq:16.06-11:45} for almost every $t\in\R$ using a numerable covering of the real line. For every fixed $x\in B_j$ let us now denote, with a little bit of abuse of notation, by $I_x$ the subset of $\R$ such that $\meas(\R\setminus I_x)=0$ and \eqref{eq:16.06-11:45} holds for all $t\in I_x$. Such a set clearly depends on $x\in B_j$. However, by Fubini's Theorem we obtain that for almost every $t\in \R$ the inequality \eqref{eq:16.06-11:45} holds for almost every $x\in B_j$. Therefore, we look for a new function $g^*\in\WTC(\R^M)$ that coincides with $g$ almost everywhere, which implies that $g$ and $g^*$ are in fact representatives of the same element in $\WTC(\R^M)$,  and such that for almost every $t\in\R$, the function $g^*$ satisfies an inequality of the type \eqref{eq:16.06-11:45} for all $x\in B_j$. Let us consider the function $g^*\colon\R\times\R^N\to\R^M$ defined as follows: for every $ t\in\R$ we set
\begin{equation}
g^*(t,x)=
\begin{cases}
g(t,x)&\text{if }x\in B_i\setminus B_{i-1}\text{ and }|g(t,x)|\le m^{i+1}(t)\,,\text{ with } i\in\N\\
\mathbf{0}&\text{otherwise}\,,
\end{cases}
\label{eq:19/06-18:45}
\end{equation}
where $\mathbf{0}$ represents the zero vector of $\R^N$. The function $g^*$ is Borel measurable and coincides with $g$ almost everywhere. Furthermore, we have that for each $j\in\N$ and for every $t\in\R$, $g^*$ satisfies
\begin{equation}
|g^*(t,x)|\le m^{j+1}(t)\,,
\label{eq:13/06-14:41}
\end{equation}
for all $x\in B_j$. Thus, $g^*$ satisfies (C1) and (C2). Therefore, to prove that $g$ and $g^*$ are representatives of the same element in $\WTC(\R^M)$, we only need to prove that $g^*$ satisfies (W) of \ref{def:TCnWTC}. In order to do that, we firstly show that for any $I=[q_1,q_2]$, $q_1,q_2\in\Q$, if $x(\cdot)\in\K^I_j$, then $g^*\big(t,x(t)\big)=g\big(t,x(t)\big)$ for almost every $t\in I$. Let $x(\cdot)\in\K^I_j$ and reason locally. Consider $t_0\in I$ and assume that $i\le|x(t_0)|<i+1$ for some $i\in\N$. Then, by the continuity of $x(\cdot)$, there exist $\delta>0$, such that  $|x(t)|\in(i-1,i+1]$ for every $t\in I_{t_0}=[t_0-\delta,t_0+\delta]\cap I$. Let $\widetilde x(\cdot)$ be the continuous function defined on $I$ which coincides with $x(\cdot)$ on $I_{t_0}$ and it is its extension by constants on $I\setminus I_{t_0}$. Trivially, $\widetilde x(\cdot)\in\K^I_j$ and $\|\widetilde x(\cdot)\|_{L^\infty(I_{t_0})}\le i+1$. Hence, for every $t\in I\cap\Q$ and for every $h\in\Q$, with $h>0$ and $t+h\in I$, considered $\phi\in C_C^+(\R)$ such that $\phi\equiv 1$ in $[t,t+h]$ and using Lemma \ref{lem:conv-subint}, we have that
\begin{equation*}
\begin{split}
\left|\,\frac{1}{h}\int_{t}^{t+h}g\big(s,\widetilde x(s)\big)\, ds\,\right|&=\lim_\nti\left|\,\frac{1}{h}\int_{t}^{t+h}g_n\big(s,\widetilde x(s)\big)\, ds\,\right| \\
&\le\lim_\nti\frac{1}{h}\int_\R\phi(s)\, m^{i+1}_{g_n}(s)\, ds=\frac{1}{h}\int_\R\phi(s)\,d\mu^{i+1}(s)\,.
\end{split}
\end{equation*}
Reasoning as in \eqref{eq:13-07_14:05}, one can prove that the previous inequality actually holds for any $t\in I$ and $h>0$ such that $t+h\in I$. Thus, taking the limit as $h\to 0$ and reasoning as before, we obtain that  $\big|g\big(t,\widetilde x(t)\big)\big|\le m^{i+1}(t)$ for almost every $t\in I$.  In particular,  for almost every $t\in  I_{t_0}$,
\[ \big|g\big(t, x(t)\big)\big|\le m^{i+1}(t) \]
and recalling how $g^*$ is defined in \eqref{eq:19/06-18:45}, we have that $g^*\big(t,x(t)\big)=g\big(t,x(t)\big)$ for  almost every $t\in  I_{t_0}$. Thanks to the compactness of $I$, we can repeat such an argument a finite number of times and deduce that actually $g^*\big(t,x(t)\big)=g\big(t,x(t)\big)$ for  almost every $t\in I$. As a consequence, one can easily prove that condition (W) of Definition \ref{def:TCnWTC} holds for $g^*$.
Therefore, $g$ and $g^*$ are two representatives of the same element of $\WTC(\R^M)$, because both are in $\WTC(\R^M)$ and only differ from each other on a negligible subset of $\R\times\R^N$.
\par\smallskip

Finally, we prove that $\overline E$ admits $L^1_{loc}$-equicontinuous $m$-bounds. For each $f\in \overline E$ and any $j\in\N$ let $m_f^j$ be either, the $m$-bound of $f$ in $S^j$ if $f\in E$, or the $m$-bound given by \eqref{eq:13/06-14:41} if $f\in \overline E\setminus E$, i.e. the absolutely continuous part of a limit measure.
Consider $j\in\N$, $r,\ep>0$ and  let $\delta=\delta(r,\ep)>0$ be the one given by the $L^1_{loc}$-equicontinuity of $S^{j+1}$. If $t,s\in[-r,r]$ with $s<t$, $t-s<\delta/3$, and $\phi\in C^+_C$ is such that $\supp \phi\subset [s-\delta/3,t+\delta/3]$ and $\phi\equiv 1$ in $[s,t]$. Then, we have
\begin{equation*}
\begin{split}
\int_{s}^t m_f^j(u)\, du&\le\int_\R\phi(u)\,  m_f^j(u)\, du\le \lim_\nti \int_\R \phi(u)\, m_{f_n}^{j+1}(u)\, du\\
&\le\sup_{g\in E}\int_{s-\delta/3}^{t+\delta/3} \, m_{g}^{j+1}(u)\, du<\ep\, ,
\end{split}
\end{equation*}
and thus, taking the superior over the  functions in $\overline E$ in the previous expression, one gets\par\vspace{-.69cm}
\begin{equation*}
\sup_{f\in\overline E}\int_{s}^t m_f^j(u)\, du<\ep\, .
\end{equation*}
Therefore, $\overline E$ admits $L^1_{loc}$-equicontinuous $m$-bounds. Analogous reasonings apply to the remaining cases in (i) and (ii).
\end{proof}
Below, the definition of hull of a function is given.
\begin{defn}\label{def:Hull}
Let $E$ denote one of the spaces in~\eqref{eq:SPincl} and $\sigma$  one of the topologies in \eqref{eq:TopIncl}, assuming that endowing $E$ with the topology $\sigma$ makes sense. If $f\in E$, we call  \emph{the hull of $f$ with respect to $(E,\sigma)$}, the topological subspace of  $(E,\sigma)$ defined~by
\begin{equation*}
\mathrm{Hull}_{(E,\sigma)}(f)=\big(\mathrm{cls}_{(E,\sigma)}\{f_t\mid t\in\R\} ,\, \sigma\big) ,
\end{equation*}
where, $\mathrm{cls}_{(E,\sigma)}(A)$ represents the closure in $(E,\sigma)$ of the set $A$, and $\sigma$ is the induced topology.
\end{defn}
Thus, as a corollary of Proposition \ref{prop:analog4.10paper} and considering the previous definition, one has the following result.
\begin{cor}
Let $\Theta$ be a suitable set of moduli of continuity and $\sigma_\Theta$ the topology defined as in \rm{Definition \ref{def:TCnWTC}}. The following statements hold
\begin{itemize}
\item[(i)] If $f\in \WTC(\R^M)$ (resp. $\LC(\R^M)$) has $L^1_{loc}$-equicontinuous $m$-bounds  (resp. $L^1_{loc}$-equicontinuous  $l$-bounds), then any $g\in\mathrm{Hull}_{(\WTC(\R^M),\sigma_\Theta)}(f)$  has $L^1_{loc}$-equicontinuous $m$-bounds  (resp.  $L^1_{loc}$-equicontinuous $l$-bounds).
\item[(ii)]
 If $f\in \WTC(\R^M)$ (resp. $\LC(\R^M)$) has $L^1_{loc}$-bounded $m$-bounds  (resp. $L^1_{loc}$-bounded $l$-bounds)  then any $g\in\mathrm{Hull}_{(\WTC(\R^M),\sigma_\Theta)}(f)$ has $L^1_{loc}$-bounded $m$\nbd-bounds  (resp. $L^1_{loc}$-bounded $l$-bounds).
 \end{itemize}
\label{cor:17-05_13:35}
\end{cor}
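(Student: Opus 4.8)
The plan is to deduce the corollary from Proposition~\ref{prop:analog4.10paper} applied to the orbit of $f$, together with two elementary facts about hulls. Set $E:=\{f_t\mid t\in\R\}$. By Definition~\ref{def:05.07-13:05}(ii), the hypothesis that $f$ has $L^1_{loc}$-equicontinuous $m$-bounds (resp. $l$-bounds, resp. the $L^1_{loc}$-bounded versions) says precisely that $E$ admits $L^1_{loc}$-equicontinuous $m$-bounds (resp. the corresponding variant). Since $\mathrm{Hull}_{(\WTC(\R^M),\sigma_\Theta)}(f)=\overline E:=\mathrm{cls}_{(\WTC(\R^M),\sigma_\Theta)}(E)$ by Definition~\ref{def:Hull}, Proposition~\ref{prop:analog4.10paper} gives at once that $\overline E$, as a set, has $L^1_{loc}$-equicontinuous (resp. bounded) $m$-bounds (resp. $l$-bounds); the four assertions of the corollary correspond to the four cases of the Proposition, so a single argument treats them in parallel.

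It remains to pass from the set-level conclusion ``$\overline E$ has equicontinuous $m$-bounds'' to the pointwise conclusion ``each $g\in\overline E$ has equicontinuous $m$-bounds,'' the latter meaning by Definition~\ref{def:05.07-13:05}(ii) that $\{g_t\mid t\in\R\}$ admits equicontinuous $m$-bounds. I would use two ingredients. The first is heredity under subsets: if a set $A$ has $L^1_{loc}$-equicontinuous (resp. bounded) $m$-bounds and $B\subseteq A$, then so does $B$, since the family $S^j$ furnished by Definition~\ref{def:05.07-13:05}(i) for $A$ restricts to a family of $m$-bounds for the functions of $B$, and a subfamily of an equicontinuous (resp. bounded) family retains the property. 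The second, which is the real content, is the translation invariance of the hull: for every $g\in\overline E$ and every $s\in\R$ one has $g_s\in\overline E$, equivalently $\pi_s(\overline E)\subseteq\overline E$ where $\pi_s\colon g\mapsto g_s$. Granting this, $\{g_t\mid t\in\R\}\subseteq\overline E$, so by heredity this set inherits the equicontinuity (resp. boundedness) of the $m$-bounds (resp. $l$-bounds), which is exactly the desired conclusion.

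To prove $\pi_s(\overline E)\subseteq\overline E$ for a fixed $s\in\R$, I would fix $g\in\overline E$ and, using that $\sigma_\Theta$ is metrizable, pick a sequence $(f_{t_n})_\nin$ in $E$ with $f_{t_n}\xrightarrow{\sigma_\Theta}g$. Since $(f_{t_n})_s=f_{t_n+s}\in E\subseteq\overline E$ and $\overline E$ is closed, it suffices to show $f_{t_n+s}\xrightarrow{\sigma_\Theta}g_s$. After the change of variables $v=u+s$, the seminorm $p_{I,j}\big(f_{t_n+s}-g_s\big)$ becomes the supremum, over curves $x(\cdot-s)$ with $x(\cdot)\in\K_j^I$, of the integrals of $f_{t_n}-g$ over $I+s$; extending such curves by constants to a rational interval $J\supseteq I+s$ places them in $\K_j^J$. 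The argument of Lemma~\ref{lem:flow-well-defined} now applies verbatim: one replaces $I+s$ by a rational interval differing from it by arbitrarily small end-pieces, on which the contribution is absorbed, and applies Lemma~\ref{lem:conv-subint}(ii) on the rational interval.

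The main obstacle is exactly this step. Unlike Lemma~\ref{lem:flow-well-defined}, which deals with a single fixed function and its single $m$-bound, here the end-piece errors produced by an irrational shift $s$ must be controlled \emph{simultaneously} for the whole sequence $(f_{t_n})$; this uniformity is furnished precisely by the $L^1_{loc}$-equicontinuity of the $m$-bounds of $\{f_{t_n}\mid n\in\N\}\cup\{g\}\subseteq\overline E$ guaranteed by Proposition~\ref{prop:analog4.10paper}. For rational $s$ no approximation is needed, since $I+s$ already has rational endpoints and Lemma~\ref{lem:conv-subint}(ii) applies directly. Finally, the $l$-bound and the $L^1_{loc}$-bounded variants require no new idea: translation invariance of $\overline E$ does not depend on which quantity is being bounded, and heredity together with the corresponding case of the Proposition covers the remaining combinations.
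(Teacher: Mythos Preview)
The paper states this as an immediate corollary of Proposition~\ref{prop:analog4.10paper} (applied to $E=\{f_t\mid t\in\R\}$) and gives no further argument, so your overall route is the intended one. You go further than the paper, correctly observing that the Proposition only yields that the \emph{set} $\overline E=\mathrm{Hull}(f)$ has the relevant property, whereas the statement requires that the orbit $\{g_t\mid t\in\R\}$ of each $g\in\overline E$ does; you then close this by showing $\pi_s(\overline E)\subseteq\overline E$ and invoking heredity under subsets. For the $m$-bound equicontinuous case this is complete, and your continuity argument for $\pi_s$ (adapting Lemma~\ref{lem:flow-well-defined} with end-piece control from Proposition~\ref{prop:analog4.10paper}(i)) is exactly the mechanism behind Theorem~\ref{thm:theta_translCont}.

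The weak point is your last paragraph. Your proof that $\pi_s(\overline E)\subseteq\overline E$ for irrational $s$ hinges on the uniform smallness of the end-piece integrals $\int m^j$, which is precisely $L^1_{loc}$-equicontinuity of the $m$-bounds. In the remaining cases---equicontinuous $l$-bounds only, and both parts of~(ii)---this $m$-bound equicontinuity is not part of the hypothesis, so the translation-invariance step as written does not go through; your claim that it ``does not depend on which quantity is being bounded'' conflates the \emph{use} of translation invariance (which is indeed agnostic) with its \emph{proof} (which, as you yourself note two sentences earlier, is not). For case~(ii) one can bypass translation invariance and argue directly from the construction in the proof of Proposition~\ref{prop:analog4.10paper}: the $m$-bound $m_g^j$ built there is dominated by a vague limit $\mu^j$ of the measures with densities $m_f^j(\cdot+t_n)$, and comparing $\mu^j([t-r,t+r])$ against $\sup_{s\in\R}\int_{s-r-1}^{s+r+1}m_f^j$ via a test function $\phi$ gives $\sup_{t\in\R}\int_{t-r}^{t+r}m_g^j<\infty$ directly. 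The $l$-bound cases admit the analogous argument. The paper itself is silent on all of this.
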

\section{Continuity of the flow}
\label{sec:cont-Flow}
This section contains several results of continuity for skew-product flows generated by either, a singular Carath\'eodory system with vector field in $\LC$ or,  by triangular systems composed of a nonlinear system with vector field in $\LC$ and a linear system with vector field in $\WTC$. The second case assumes additional relevance when the linear system is the variational equation of the non-linear one. The classic theory of Carath\'eodory ODEs  provides the differentiability of the solutions with respect to the initial conditions when the respective vector fields are continuously differentiable with respect to $x$ (see Kurzweil \cite{book:Kurz}). Nevertheless, in the last part of the section, we provide conditions that allow to extend such conclusions to the solutions of specific Carath\'eodory differential equations whose vector fields may possibly not admit continuous partial derivatives with respect to $x$. In particular, we recall the notion of linearized skew-product flow given in \cite[Definition 6.2]{paper:LNO} and introduce a weak and new type of continuous linearized skew-product flow.
\par \smallskip
We start with a result of continuity of the base flow, that is, continuity of the time translations in the space $\left(\WTC(\R^M),\sigma_{\Theta}\right)$.
\begin{thm}\label{thm:theta_translCont}
Let $\Theta$ be a suitable set of moduli of continuity and consider the space $\left(\WTC(\R^M),\sigma_{\Theta}\right)$. If $E\subset\WTC(\R^M)$ admit $L^1_{loc}$-equicontinuous $m$-bounds, then, denoted by $\overline E=\mathrm{cls}_{(\WTC(\R^M),\sigma_\Theta)}(E)$, one has that the map
\begin{equation*}
\varphi:\R\times \overline E\to \WTC(\R^M)\, ,\qquad (t,f)\mapsto\varphi(t,f)=f_t\, ,
\end{equation*}
is  well-defined and continuous.
\label{thm:continuityBASE}
\end{thm}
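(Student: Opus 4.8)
The plan is to separate the assertion into well\nobreakdash-definedness, which is immediate, and continuity, which I would reduce to a seminorm estimate and then to two elementary limits. For well\nobreakdash-definedness, since $\overline E\subset\WTC(\R^M)$, Lemma~\ref{lem:flow-well-defined} guarantees $f_t\in\WTC(\R^M)$ for every $f\in\overline E$ and $t\in\R$, so $\varphi$ indeed takes values in $\WTC(\R^M)$. For continuity, I would first record, via Proposition~\ref{prop:analog4.10paper}, that $\overline E$ itself has $L^1_{loc}$\nobreakdash-equicontinuous $m$-bounds, and fix for each $j$ a corresponding equicontinuous family $\{m^j_f\mid f\in\overline E\}$. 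Because $(\WTC(\R^M),\sigma_\Theta)$ is metrizable, the product $\R\times\overline E$ is metric and it suffices to prove sequential continuity: given $(t_n,f_n)\to(t_0,f_0)$, I must show $p_{I,j}\big((f_n)_{t_n}-(f_0)_{t_0}\big)\to0$ for every rational $I=[q_1,q_2]$ and every $j\in\N$.

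Fix such $I,j$ and an arbitrary $x(\cdot)\in\K_j^I$. I would write
\[
\int_I\!\big[f_n(s+t_n,x(s))-f_0(s+t_0,x(s))\big]ds
= \underbrace{\int_I\!\big[f_n(s+t_n,x(s))-f_0(s+t_n,x(s))\big]ds}_{\mathrm{(I)}}
+\underbrace{\int_I\!\big[f_0(s+t_n,x(s))-f_0(s+t_0,x(s))\big]ds}_{\mathrm{(II)}},
\]
so that it is enough to bound $\sup_x|\mathrm{(I)}|$ and $\sup_x|\mathrm{(II)}|$ separately. In both terms I would apply the change of variables $u=s+t_n$ (resp. $u=s+t_0$), turning the integrals into integrals over the translated interval $I+t_n$ (resp. $I+t_0$) of the relevant integrand $f_\bullet(u,z(u))$, where $z(\cdot)=x(\cdot-t_n)$ (resp. $x(\cdot-t_0)$) has the same modulus $\theta^I_j$ and, extended by constants, belongs to $\K_j^J$ for a fixed rational interval $J$ containing $I+t_n$ for all large $n$.

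For term (I) I would pick rational $p_1<q_1+t_0<q_2+t_0<p_2$ inside $J$ whose boundary gaps carry $m$-bound mass below $\ep$; by the $L^1_{loc}$\nobreakdash-equicontinuity of the $m$-bounds of $\overline E$ the integrals of $m^j_{f_n}+m^j_{f_0}$ over $[p_1,p_2]\setminus(I+t_n)$ stay below $2\ep$ for all large $n$, uniformly in $x$, so that $\sup_x|\mathrm{(I)}|\le \sup_{z\in\K_j^J}\big|\int_{p_1}^{p_2}[f_n(u,z(u))-f_0(u,z(u))]\,du\big|+2\ep$. The first summand tends to $0$ by Lemma~\ref{lem:conv-subint}(ii), since $f_n\to f_0$ in $\sigma_\Theta$; letting $n\to\infty$ and then $\ep\to0$ disposes of term (I). For term (II) the function is the fixed $f_0$, and after the same boundary reduction on $[p_1,p_2]$ I am left with $\int_{p_1}^{p_2}[f_0(u,z_n(u))-f_0(u,z_0(u))]\,du$, where $z_n=x(\cdot-t_n)$ and $z_0=x(\cdot-t_0)$. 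Here the functional $\Phi(z)=\int_{p_1}^{p_2}f_0(u,z(u))\,du$ is continuous on $(\K_j^J,\|\cdot\|_\infty)$ by condition (W) of Definition~\ref{def:TCnWTC} together with Lemma~\ref{lem:conv-subint}(i); since $\K_j^J$ is compact by Arzel\`a--Ascoli, $\Phi$ is uniformly continuous, and as $\|z_n-z_0\|_\infty\le\theta^I_j(|t_n-t_0|)\to0$ uniformly in $x$, this forces $\sup_x|\mathrm{(II)}|\to0$.

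I expect the main obstacle to be term (II): the supremum over the infinite family $\K_j^I$ cannot be interchanged with the time\nobreakdash-translation limit using condition (W) alone, since (W) only furnishes continuity along a single convergent sequence of curves. The resolution is precisely the compactness of $\K_j^J$, which upgrades this pointwise continuity of $\Phi$ to uniform continuity and thereby makes the estimate uniform in $x$; the secondary technical nuisance, namely the moving and generically irrational endpoints $q_i+t_n$ of the translated intervals, is absorbed by the $L^1_{loc}$\nobreakdash-equicontinuity of the $m$-bounds transferred to $\overline E$ by Proposition~\ref{prop:analog4.10paper}.
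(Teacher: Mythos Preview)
Your proof is correct and follows essentially the same approach as the paper's: split the seminorm into an $f_n$-versus-$f_0$ piece handled by Lemma~\ref{lem:conv-subint}(ii) and a $t_n$-versus-$t_0$ piece handled via condition~(W), with the irrational translated endpoints absorbed by the $L^1_{loc}$-equicontinuous $m$-bounds transferred to $\overline E$ by Proposition~\ref{prop:analog4.10paper}. Your treatment of term~(II) is in fact more explicit than the paper's, which simply cites Lemma~\ref{lem:conv-subint}(i) for $R_n\to0$ without spelling out the compactness-of-$\K_j^J$/uniform-continuity step needed to push the limit through the supremum over $x(\cdot)\in\K_j^I$.
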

\begin{proof}
Firstly, notice that the map is well-defined thanks to Lemma \ref{lem:flow-well-defined}. Let $(f_n)_{\nin}$  be a sequence in $E$  converging to $f$ in $\left(\WTC(\R^M),\sigma_{\Theta}\right)$ and $(t_n)_\nin$ a sequence in $\R$ converging to  $t\in\R$. We want to prove that for every $I=[q_1,q_2]$, $q_1,q_2\in\Q$, and every $j\in\N$ one has that
\begin{equation*}
\lim_\nti \sup_{x(\cdot)\in\K^I_j} \bigg|\int_I\big[f_n\big(t_n+s,x(s)\big)-f\big(t+s,x(s)\big)\big]ds\,\bigg|=0\,.
\end{equation*}
Let us fix $\ep>0$, $j\in\N$, and $I=[q_1,q_2]$, $q_1,q_2\in\Q$ and consider an interval $[r_1,r_2]$ such that, for every $\nin$, one has $[q_1+t_n, q_2+t_n]\subset[r_1,r_2]$.  Since $E$ admits $L^1_{loc}$-equicontinuous $m$-bounds, and thanks to Proposition \ref{prop:analog4.10paper}, one has that there exists $\delta>0$ such that
\begin{equation*}
\sup_{g\in \overline E}\int_{\tau_1}^{\tau_2} m_g^j(u)\,du<\ep/6\,,
\end{equation*}
whenever $\tau_1,\tau_2\in[r_1,r_2]$ and $0< \tau_2-\tau_1<\delta$. Consider $p_1(t),\, p_2(t)\in\Q$ such that $q_1+t<p_1(t)<p_2(t)<q_2+t$ and
\begin{equation*}
 p_1(t)-q_1-t<\delta\quad\text{and}\quad q_2+t-p_2(t)<\delta\,.
\end{equation*}
Notice also that, since $t_n\to t$, then there exists $n_0\in\N$ such that for every $n>n_0$ one has that $q_1+t_n<p_1(t)<p_2(t)<q_2+t_n$ and
\begin{equation*}
 p_1(t)-q_1-t_n<\delta\quad\text{and}\quad q_2+t_n-p_2(t)<\delta\,.
\end{equation*}
Then, for every $n>n_0$ one has that
\begin{equation*}
\begin{split}
&\sup_{x(\cdot)\in\K^I_j} \bigg|\int_I\big[f_n\big(t_n+s,x(s)\big)-f\big(t+s,x(s)\big)\big]ds\,\bigg|\\
&\;\quad=\sup_{x(\cdot)\in\K^I_j} \bigg|\int_{q_1+t_n}^{q_2+t_n}f_n\big(u,x(u-t_n)\big)du- \int_{q_1+t}^{q_2+t}f\big(u,x(u-t)\big)du\,\bigg|\\
&\;\quad\le\sup_{x(\cdot)\in\K^I_j} \bigg|\int_{p_1(t)}^{p_2(t)}\big[f_n\big(u,x(u-t_n)\big)-f\big(u,x(u-t)\big)\big]du\,\bigg|+\frac{4\ep}{6}\\
&\;\quad\le\sup_{x(\cdot)\in\K^I_j} \bigg|\int_{p_1(t)}^{p_2(t)}\big[f_n\big(u,x(u-t_n)\big)-f\big(u,x(u-t_n)\big)\big]du\,\bigg|+\\
&\qquad+\sup_{x(\cdot)\in\K^I_j} \bigg|\int_{p_1(t)}^{p_2(t)}\!\!\big[f\big(u,x(u-t_n)\big)-f\big(u,x(u-t)\big)\big]du\,\bigg|+\frac{2\ep}{3} = P_n+ R_n+\frac{2\ep}{3}.
\end{split}
\end{equation*}
Then, if we take an interval $J$ with rational extremes such that $I\cup [p_1(t),p_2(t)]\subset J$,  up to a suitable extension by constants to $J$, the functions  $y_n(\cdot)=x(\cdot\,-t_n)$  belong to $\K_j^J$ and we deduce that
\[ \lim_{n\to\infty}P_n\le \lim_{\nti}\sup_{y(\cdot)\in\K^J_j} \bigg|\int_{p_1(t)}^{p_2(t)}\big[f_n\big(u,y(u)\big)-f\big(u,y(u)\big)\big]\,du\bigg|=0\]
because $(f_n)_{\nin}$ converges to $f$ in $\left(\WTC(\R^M),\sigma_{\Theta}\right)$ and thanks to Lemma~\ref{lem:conv-subint}(ii).\par
Analogously, recalling that $f\in\WTC$  satisfies an equality of the type \eqref{WT}, from  Lemma~\ref{lem:conv-subint}(i) we deduce that $\lim_{n\to\infty}R_n=0$, which finishes the proof.
\end{proof}
As a corollary of the previous theorem, one has that the following map is well defined and continuous. Thus,  a continuous flow on the hull of a function in $\WTC$ with $L^1_{loc}$-equicontinuous $m$-bounds is obtained.
\begin{cor}
Let $f\in\WTC(\R^M)$  admit $L^1_{loc}$-equicontinuous $m$-bounds. Then, the map
\begin{equation*}
\varphi:\R\times \mathrm{Hull}_{(\WTC,\sigma_\Theta)}(f)\to \mathrm{Hull}_{(\WTC,\sigma_\Theta)}(f)\, ,\qquad (t,g)\mapsto\varphi(t,g)=g_t\, ,
\end{equation*}
defines a continuous flow on $\mathrm{Hull}_{(\WTC,\sigma_\Theta)}(f)$.
\end{cor}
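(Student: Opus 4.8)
The plan is to deduce everything from Theorem~\ref{thm:continuityBASE} by taking $E=\{f_t\mid t\in\R\}$. Since $f\in\WTC(\R^M)$ has $L^1_{loc}$-equicontinuous $m$-bounds, Definition~\ref{def:05.07-13:05}(ii) tells us precisely that the set $E$ admits $L^1_{loc}$-equicontinuous $m$-bounds, and by construction $\overline E:=\mathrm{cls}_{(\WTC(\R^M),\sigma_\Theta)}(E)=\mathrm{Hull}_{(\WTC,\sigma_\Theta)}(f)$. Theorem~\ref{thm:continuityBASE} then yields directly that $(t,g)\mapsto g_t$ is well-defined and continuous as a map $\R\times\overline E\to\WTC(\R^M)$. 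What remains is to check that its image actually lies inside the hull, that it obeys the flow axioms, and that corestricting the codomain to the hull preserves continuity.

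Second, I would show that $g_t\in\mathrm{Hull}(f)$ for every $g\in\mathrm{Hull}(f)$ and every $t\in\R$. As $(\WTC(\R^M),\sigma_\Theta)$ is a metric space, I can choose a sequence $(f_{s_n})_\nin\subset E$ with $f_{s_n}\xrightarrow{\sigma_\Theta}g$. The partial map $\varphi(t,\cdot)$ is continuous by the theorem, so $g_t=\lim_\nti (f_{s_n})_t=\lim_\nti f_{s_n+t}$ in $\sigma_\Theta$. Each translate $f_{s_n+t}$ belongs to $E$, hence to the closed set $\mathrm{Hull}(f)$, and therefore the limit $g_t$ lies in $\mathrm{Hull}(f)$ as well. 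This proves that $\varphi\colon\R\times\mathrm{Hull}(f)\to\mathrm{Hull}(f)$ is well-defined.

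Third, the flow axioms follow from a direct computation on representatives. For $g\in\mathrm{Hull}(f)$ one has $\varphi(0,g)=g_0$ with $g_0(s,x)=g(s,x)$, so $\varphi(0,\cdot)$ is the identity. For the group property, $(g_s)_t(u,x)=g_s(u+t,x)=g(u+t+s,x)=g_{t+s}(u,x)$, that is $\varphi(t,\varphi(s,g))=\varphi(t+s,g)$ for all $s,t\in\R$. Finally, since $\mathrm{Hull}(f)$ carries the subspace topology inherited from $(\WTC(\R^M),\sigma_\Theta)$ and the map is continuous as a map into the larger space, its corestriction to $\mathrm{Hull}(f)$ is continuous as well.

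The main obstacle, though a mild one, is the well-definedness step: one must be certain that translating an element of the hull leaves it in the hull. This is exactly where the full strength of Theorem~\ref{thm:continuityBASE} is used, namely continuity of the base translations on all of $\overline E$ and not merely on $E$, together with the metrizability of $\sigma_\Theta$ that licenses the sequential argument and the closedness of the hull.
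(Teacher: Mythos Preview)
Your proof is correct and follows exactly the approach the paper intends: the corollary is stated without explicit proof, simply as a direct consequence of Theorem~\ref{thm:continuityBASE} applied with $E=\{f_t\mid t\in\R\}$. You have spelled out the details the paper leaves implicit---invariance of the hull under translations, the flow axioms, and the continuity of the corestriction---all of which are routine once the theorem is in hand.
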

As follows, ordinary differential equations whose vector fields belong to the already introduced Carath\'eodory spaces, are treated. For the sake of completeness and to set some notation, we state a theorem of existence and uniqueness of the solution for Cauchy problems of Carath\'eodory type. A proof can be found in Coddington and Levinson \cite[Theorems 1.1, 1.2 and 2.1]{book:CL}.
\begin{thm}
For any $f\in\LC$ and any $x_0\in\R^N$ there exists a maximal interval $I_{f,x_0}=(a_{f,x_0},b_{f,x_0})$ and a unique continuous function $x(\cdot,f,x_0)$ defined on $I_{f,x_0}$ which is the solution of the Cauchy problem
\begin{equation*}\label{eq:solLCE}
\dot x=f(t,x)\, ,\qquad x(0)=x_0\, .
\end{equation*}
In particular, if $a_{f,x_0}>-\infty$ (resp. $b_{f,x_0}<\infty$), then $|x(t,f,x_0)|\to\infty$ as $t\to a_{f,x_0}$ (resp. as $t\to b_{f,x_0}$).
\label{thm:05.07-13:44}
\end{thm}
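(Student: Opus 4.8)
The plan is to follow the classical Carath\'eodory scheme adapted to the Lipschitz class $\LC$, reducing the Cauchy problem to the integral equation
\[
x(t)=x_0+\int_0^t f\big(s,x(s)\big)\,ds\,,
\]
whose continuous solutions coincide with the absolutely continuous solutions of $\dot x=f(t,x)$, $x(0)=x_0$. Indeed, for any continuous $x(\cdot)$ taking values in a compact set $K$, the map $s\mapsto f(s,x(s))$ is measurable by (C1) and dominated by the $m$-bound $m^K\in L^1_{loc}$ by (C2); hence the integral is well defined, its right-hand side is absolutely continuous, and a fixed point of the associated integral operator is exactly a local solution.

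For local existence and uniqueness I would fix $\rho>0$, set $K=B_{|x_0|+\rho}$, and work on the complete metric space $C([-a,a],\overline B(x_0,\rho))$. Using (C2) one chooses $a>0$ small enough that $\int_{-a}^{a} m^K(s)\,ds\le\rho$, which guarantees that the integral operator $T$ maps this space into itself; shrinking $a$ further, absolute continuity of the integral yields $\int_{-a}^{a} l^K(s)\,ds<1$, where $l^K\in L^1_{loc}$ is the Lipschitz bound from (L), so that $T$ becomes a contraction. The Banach fixed point theorem then produces a unique local solution. Uniqueness on any common interval also follows directly from (L) and Gronwall's inequality applied to $|x(t)-y(t)|\le\int_0^t l^K(s)\,|x(s)-y(s)|\,ds$. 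The maximal interval is obtained by gluing: by uniqueness any two solutions defined on open intervals around $0$ agree on their overlap, so their union defines a solution on the union of the intervals, and taking the union over all local solutions produces the maximal open interval $I_{f,x_0}=(a_{f,x_0},b_{f,x_0})$ together with the unique $x(\cdot,f,x_0)$.

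The only genuinely delicate point, which I expect to be the main obstacle, is the blow-up alternative. Suppose $b:=b_{f,x_0}<\infty$ and, towards a contradiction, that $|x(t)|\not\to\infty$ as $t\to b^-$; then there exist $R>0$ and a sequence $t_n\uparrow b$ with $|x(t_n)|\le R$. Put $K=B_{R+1}$ with $m$-bound $m^K\in L^1_{loc}$, and use absolute continuity of the integral to pick $\delta>0$ with $\int_s^t m^K(u)\,du<1$ whenever $0<t-s<\delta$ and $s,t\in[b-\delta,b]$. Choosing $n$ with $t_n>b-\delta$, a bootstrapping argument shows that $x(t)$ cannot leave $B_{R+1}$ on $[t_n,b)$: as long as $x(\cdot)$ stays in $B_{R+1}$ one has $|x(t)-x(t_n)|\le\int_{t_n}^t m^K(u)\,du<1$, whence $|x(t)|<R+1$. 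Therefore $x(\cdot)$ remains in the compact set $K$ on all of $[t_n,b)$, and the same estimate makes $x(\cdot)$ Cauchy as $t\to b^-$, so that $x_b:=\lim_{t\to b^-}x(t)$ exists. Solving the Cauchy problem with data $(b,x_b)$ and concatenating would extend $x$ beyond $b$, contradicting maximality. Hence $|x(t)|\to\infty$ as $t\to b_{f,x_0}^-$, and the symmetric argument treats the left endpoint $a_{f,x_0}$. The reference to Coddington and Levinson \cite{book:CL} supplies the remaining routine computations.
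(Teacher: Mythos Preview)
Your proposal is correct and follows the standard Carath\'eodory scheme. The paper does not give its own proof of this statement at all: it simply cites \cite[Theorems 1.1, 1.2 and 2.1]{book:CL}, so your outline is precisely the kind of argument that reference supplies, and there is nothing to compare.
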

\begin{cor}\label{cor:14:07-21:05}
Let $\Theta$ be a suitable set of moduli of continuity. For any $f\in\LC$, $F\in\WTC\big(\R^{N\times N}\big),\, h\in\WTC$, and $x_0,\, y_0\in\R^N$, there exists a unique solution of the Cauchy problem
\begin{equation}
\begin{cases}
\dot x=f(t,x), &x(0)=x_0\,,\\
\dot y=F(t,x)\,y+h(t,x), &y(0)= y_0\,,
\end{cases}
\label{eq:system}
\end{equation}
which will be denoted by $(x(\cdot,f,x_0),y(\cdot,f,F,h,x_0, y_0))$, and whose maximal interval of definition coincides with the interval $I_{f,x_0}$ provided by {\rm Theorem~\ref{thm:05.07-13:44}}.
\end{cor}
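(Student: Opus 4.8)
The plan is to exploit the triangular structure of \eqref{eq:system}: the first equation does not involve $y$, so I would solve it in isolation first and then substitute the resulting $x$-trajectory into the second equation, which thereby becomes a genuinely linear non-autonomous system whose coefficients depend only on $t$.

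First, since $f\in\LC$, Theorem~\ref{thm:05.07-13:44} supplies a unique maximal solution $x(\cdot,f,x_0)$ defined on $I_{f,x_0}=(a_{f,x_0},b_{f,x_0})$. Writing $x(t)=x(t,f,x_0)$ for brevity, I would set $A(t)=F\big(t,x(t)\big)$ and $b(t)=h\big(t,x(t)\big)$, so that the second equation reads $\dot y=A(t)\,y+b(t)$ with $y(0)=y_0$, a linear inhomogeneous Cauchy problem on $I_{f,x_0}$.

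The key preliminary step is to verify that $A\in L^1_{loc}(I_{f,x_0},\R^{N\times N})$ and $b\in L^1_{loc}(I_{f,x_0},\R^N)$. Measurability is immediate: the map $t\mapsto\big(t,x(t)\big)$ is continuous, while $F$ and $h$ are Borel measurable by (C1), so the compositions are measurable. For local integrability I would invoke the $m$-bounds granted by (C2): any compact subinterval $[c,d]\subset I_{f,x_0}$ has compact image $x([c,d])$, hence contained in some ball $B_j$; if $m^{B_j}_F,\,m^{B_j}_h\in L^1_{loc}$ denote $m$-bounds of $F$ and $h$ on $B_j$, then $|A(t)|\le m^{B_j}_F(t)$ and $|b(t)|\le m^{B_j}_h(t)$ for a.e. $t\in[c,d]$, so both $A$ and $b$ are integrable on $[c,d]$.

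Once $A$ and $b$ are locally integrable, existence and uniqueness of an absolutely continuous solution $y(\cdot)$ on the whole of $I_{f,x_0}$ follow from the classical Carath\'eodory theory for linear systems (Coddington and Levinson \cite{book:CL}): the right-hand side $(t,y)\mapsto A(t)\,y+b(t)$ is Lipschitz in $y$ with $L^1_{loc}$ Lipschitz constant $|A(\cdot)|$, which yields uniqueness, while the Gronwall estimate $|y(t)|\le\big(|y_0|+\int_0^t|b(s)|\,ds\big)\exp\big(\int_0^t|A(s)|\,ds\big)$ precludes finite-time blow-up inside $I_{f,x_0}$ and thus forces the maximal interval of $y$ to contain $I_{f,x_0}$. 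Since $x$, and therefore the coupled pair, is defined exactly on $I_{f,x_0}$, the maximal interval of $\big(x(\cdot,f,x_0),y(\cdot,f,F,h,x_0,y_0)\big)$ coincides with $I_{f,x_0}$, which is the claim. The only genuinely delicate point is the local integrability of the coefficients; once the $m$-bound mechanism of (C2) is brought to bear, the remainder is standard linear Carath\'eodory ODE theory.
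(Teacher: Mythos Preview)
Your argument is correct, and it is precisely the natural way to fill in the details: the paper states this result as a corollary without proof, presumably regarding it as an immediate consequence of Theorem~\ref{thm:05.07-13:44} together with the standard linear Carath\'eodory theory. Your decoupling via the triangular structure, the verification that $A(\cdot)=F(\cdot,x(\cdot))$ and $b(\cdot)=h(\cdot,x(\cdot))$ lie in $L^1_{loc}$ on $I_{f,x_0}$ by means of the $m$-bounds from~(C2), and the Gronwall estimate preventing blow-up inside $I_{f,x_0}$ together constitute exactly the argument the paper leaves implicit.
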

\begin{defn}\label{def:THETAjseq}
Let $E\subset\LC$ admit $L^1_{loc}$-equicontinuous $m$-bounds. For any $j\in\N$ and for any interval $I=[q_1,q_2]$, $q_1,q_2\in\Q$, define
\begin{equation*}
\theta^I_j(s):=
 \sup_{t\in I,f\in E}\int_t^{t+s}m_f^j(u)\, du\, ,
\end{equation*}
where, for any $f\in E$, the function $m_f^j(\cdot)\in L^1_{loc}$ denotes the optimal $m$-bounds of $f$ on $B_j$. Notice that, since $E$ admits $L^1_{loc}$-equicontinuous $m$-bounds, then  $\Theta=\{\theta^I_j(\cdot)\mid I=[q_1,q_2],\, q_1,q_2\in\Q,\, j\in\N\}$ defines a suitable set of moduli of continuity.
\end{defn}
\begin{rmk}\label{rmk:THETAjfunc}
If $f\in\LC$ has $L^1_{loc}$-equicontinuous $m$-bounds we similarly define for any $B_j\subset\R^N$,
\begin{equation*}
\theta_j(s):= \sup_{t\in\R}\int_t^{t+s}m^j(u)\, du\, ,
\end{equation*}
where $m^j(\cdot)$ is the optimal $m$-bound for $f$ on $B_j$. Here again, notice that $\Theta=\{\theta^I_j(\cdot)\mid I=[q_1,q_2],\, q_1,q_2\in\Q,\, j\in\N\}$  defines a suitable set of moduli of continuity thanks to the $L^1_{loc}$-equicontinuity.
\end{rmk}
Eventually, before proving the theorem of continuity of the solutions with respect to the initial data and the variation on the vector fields, we present a technical lemma that will be necessary in the second part of the proof of the theorem. The proof is carried out through standard arguments of measure theory and thus it is omitted.
\begin{lem}
Let $\big(a_n(\cdot)\big)_\nin$ be a sequence in $L^1_{loc}$ such that $\{a_n(\cdot)\mid\nin\}$ is $L^1_{loc}$-bounded. If there exists $a(\cdot)\in L^1_{loc}$ such that  for any $ t_1,t_2\in\Q$ with $t_1<t_2$ one has
\begin{equation}
 \lim_\nti\int_{t_1}^{t_2}a_n(s)\, ds= \int_{t_1}^{t_2}a(s)\, ds\,,
\label{eq:02.05-12:12}
\end{equation}
then, for any $ t_1,t_2\in\Q$
\begin{equation}
\lim_\nti\int_{t_1}^{t_2}a_n(s)\phi_n(s)\, ds= \int_{t_1}^{t_2}a(s)\phi(s)\, ds\,,
\label{eq:02.05-11:43}
\end{equation}
where $\big(\phi_n(\cdot)\big)_\nin$ is any sequence in $C[t_1,t_2]$ converging uniformly to some $\phi(\cdot)\in C[t_1,t_2]$.
\label{lem:25.04}
\end{lem}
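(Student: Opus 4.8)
The plan is to deduce \eqref{eq:02.05-11:43} from the hypothesis \eqref{eq:02.05-12:12} by a two-stage approximation: first I replace the variable weight $\phi_n$ by its uniform limit $\phi$, exploiting the $L^1_{loc}$-boundedness of the $a_n$, and then I approximate the fixed continuous function $\phi$ by a step function whose breakpoints are rational, so that \eqref{eq:02.05-12:12} can be applied to each piece.

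First I would fix $t_1,t_2\in\Q$ with $t_1<t_2$, set $r=\max\{|t_1|,|t_2|\}$, and use the $L^1_{loc}$-boundedness of $\{a_n\mid\nin\}$ to obtain a constant $M>0$ with $\int_{t_1}^{t_2}|a_n(s)|\,ds\le M$ for every $\nin$. Splitting
\[
\int_{t_1}^{t_2} a_n\phi_n\,ds-\int_{t_1}^{t_2} a\phi\,ds=\int_{t_1}^{t_2} a_n(\phi_n-\phi)\,ds+\left(\int_{t_1}^{t_2} a_n\phi\,ds-\int_{t_1}^{t_2} a\phi\,ds\right),
\]
the first summand is bounded in absolute value by $M\,\|\phi_n-\phi\|_\infty$, which tends to $0$ since $\phi_n\to\phi$ uniformly on $[t_1,t_2]$. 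Thus it remains to prove that $\int_{t_1}^{t_2} a_n\phi\,ds\to\int_{t_1}^{t_2} a\phi\,ds$ for the single continuous function $\phi$.

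For this last step I would invoke the uniform continuity of $\phi$ on $[t_1,t_2]$. Given $\ep>0$, I choose a partition $t_1=r_0<r_1<\dots<r_k=t_2$ with all $r_i\in\Q$ and mesh so fine that the step function $\psi$ equal to $\phi(r_{i-1})$ on $[r_{i-1},r_i)$ satisfies $\|\phi-\psi\|_\infty<\ep$. Then
\[
\int_{t_1}^{t_2} a_n\phi\,ds-\int_{t_1}^{t_2} a\phi\,ds=\int_{t_1}^{t_2} a_n(\phi-\psi)\,ds+\int_{t_1}^{t_2}(a_n-a)\psi\,ds+\int_{t_1}^{t_2} a(\psi-\phi)\,ds,
\]
where the outer two terms are bounded by $M\ep$ and $\ep\int_{t_1}^{t_2}|a(s)|\,ds$ respectively, hence are uniformly small in $n$. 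The middle term is the finite sum $\sum_{i=1}^k\phi(r_{i-1})\int_{r_{i-1}}^{r_i}\big(a_n(s)-a(s)\big)\,ds$, and because every $r_i$ is rational each of its integrals tends to $0$ as $\nti$ by \eqref{eq:02.05-12:12}. Letting first $\nti$ and then $\ep\to 0$ gives the required convergence, completing the proof of \eqref{eq:02.05-11:43}.

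The argument uses only elementary measure theory; the one point deserving attention is that the approximating step function must have rational breakpoints, so that the hypothesis \eqref{eq:02.05-12:12} is directly available on each subinterval, and that the bound $M$ controlling $\int_{t_1}^{t_2}|a_n|\,ds$ is uniform in $n$ --- precisely what $L^1_{loc}$-boundedness supplies. There is no genuine obstacle beyond organizing these approximations in the right order.
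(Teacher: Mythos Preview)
Your argument is correct and is precisely the kind of standard measure-theoretic approximation the paper has in mind; the paper in fact omits the proof entirely, stating only that it ``is carried out through standard arguments of measure theory.'' Your two-stage reduction---first absorbing $\phi_n-\phi$ via the uniform $L^1$ bound, then approximating the fixed $\phi$ by a rational-breakpoint step function so that \eqref{eq:02.05-12:12} applies on each subinterval---fills this gap cleanly.
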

\begin{thm}\label{thm:ContODETthetaE}
Consider $E\subset\LC$  with $L^1_{loc}$-equicontinuous $m$-bounds and let $\Theta=\{\theta^I_j\mid I=[q_1,q_2],\,  q_1,q_2\in\Q,\,  j\in\N\}$ be the countable family of moduli of continuity in {\rm Definition~\ref{def:THETAjseq}}. Additionally, consider $B\subset\WTC\big(\R^{N\times N}\big)$ and $C\subset\WTC$, both with $L^1_{loc}$-equicontinuous $m$-bounds. With the notation of {\rm Theorem~\ref{thm:05.07-13:44}} and {\rm Corollary~\ref{cor:14:07-21:05}},
\begin{itemize}
\item[(i)] if $(f_n)_\nin$  in $E$ converges to $f$ in $(\LC,\sigma_\Theta)$ and $(x_{0,n})_\nin$ in $\R^N$ converges to $x_0\in\R^N$, then
    \[ \qquad x(\cdot,f_n,x_{0,n}) \xrightarrow{\nti}  x(\cdot,f,x_0)\]
     uniformly in any $[T_1,T_2]\subset I_{f,x_0}$;\par\vspace{0.05cm}
\item[(ii)] if $(F_n)_\nin$ in $B$ converges to $F$ in $\big(\WTC\big(\R^{N\times N}\big),\sigma_\Theta\big)$, $(h_n)_\nin$ in $C$ converges to $h$ in $(\WTC,\sigma_\Theta)$, and  $( y_{0,n})_\nin$ in $\R^N$ converges to $ y_0\in\R^N$, then
    \[  \hspace{1cm} y(\cdot,f_n,F_n,h_n,x_{0,n}, y_{0,n})\xrightarrow{\nti} y(\cdot,f,F,h,x_0,  y_0)\]
uniformly  in any $[T_1,T_2]\subset I_{f,x_0}$.
\end{itemize}
\end{thm}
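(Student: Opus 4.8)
The plan is to reduce both statements to their integral (fixed-point) formulations and to control the resulting difference equations by a Gronwall argument, isolating in each case the single term for which only the weak convergence in $\sigma_\Theta$ is available. Throughout I write $x_*(\cdot)=x(\cdot,f,x_0)$, $x_n(\cdot)=x(\cdot,f_n,x_{0,n})$, and likewise $y_*,y_n$ for the second components. The decisive structural fact, built into Definition~\ref{def:THETAjseq}, is that any solution of $\dot x=g(t,x)$ with $g\in E$ taking values in $B_j$ on a rational interval $I=[q_1,q_2]$ automatically admits $\theta^I_j$ as a modulus of continuity, since $|x(t)-x(s)|\le\int_s^t m_g^j(u)\,du\le\theta^I_j(t-s)$; hence such a solution lies in $\K^I_j$ and may be used both as a test function in Lemma~\ref{lem:conv-subint} and in the seminorms generating $\sigma_\Theta$. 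This is what lets the weak topology interact with the nonlinearity.

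For (i), fix a rational interval $I\supset[T_1,T_2]\cup\{0\}$ inside $I_{f,x_0}$, let $\rho$ bound $|x_*|$ on $I$, and choose $j$ with $B_{\rho+1}\subset B_j$. Running forward and backward from $0$, let $\tau_n$ be the exit time of $x_n$ from $B_{\rho+1}$; on the closure of the corresponding interval one has $x_n\in\K^I_j$. Writing the difference equation
\begin{equation*}
x_n(t)-x_*(t)=(x_{0,n}-x_0)+\int_0^t\!\big[f_n(s,x_n(s))-f(s,x_n(s))\big]ds+\int_0^t\!\big[f(s,x_n(s))-f(s,x_*(s))\big]ds,
\end{equation*}
the first integral tends to $0$ uniformly in $t$ by Lemma~\ref{lem:conv-subint}(ii) (each $x_n\in\K^I_j$ and $f_n\to f$ in $\sigma_\Theta$), the passage from rational to arbitrary upper limits being supplied by the $L^1_{loc}$-equicontinuity of the $m$-bounds; the second integral is bounded by $\int_0^t l_f^{\,j}(s)\,|x_n(s)-x_*(s)|\,ds$ through the $l$-bound of $f\in\LC$. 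Gronwall's inequality then gives $\sup|x_n-x_*|\le\ep_n\exp\!\big(\int_I l_f^{\,j}\big)$ with $\ep_n\to0$. For $n$ large this forces $x_n$ to stay strictly inside $B_{\rho+1}$, so $\tau_n$ reaches the endpoints of $[T_1,T_2]$ (otherwise the non-blow-up characterization of Theorem~\ref{thm:05.07-13:44} would contradict maximality of $\tau_n$), and the same estimate yields uniform convergence on $[T_1,T_2]$.

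For (ii), Corollary~\ref{cor:14:07-21:05} together with part (i) ensures that $y_n$ exists on $[T_1,T_2]$ for $n$ large, and the $L^1_{loc}$-equicontinuous $m$-bounds of $B$ and $C$ provide, via Gronwall, a uniform bound on $|y_n|$ there. In the difference equation for $y_n-y_*$ I split the coefficient term as
\begin{equation*}
F_n(s,x_n(s))\,y_n(s)-F(s,x_*(s))\,y_*(s)=F_n(s,x_n(s))\big(y_n(s)-y_*(s)\big)+\big[F_n(s,x_n(s))-F(s,x_*(s))\big]y_*(s).
\end{equation*}
The first piece, together with the analogous $h$-term, supplies the Gronwall kernel $\int_0^t m_{F_n}^j(s)\,|y_n(s)-y_*(s)|\,ds$. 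For the second piece I first show $\int_{t_1}^{t_2}\!\big[F_n(s,x_n(s))-F(s,x_*(s))\big]ds\to0$ for rational $t_1,t_2$ via the splitting $F_n(s,x_n)-F(s,x_*)=[F_n-F](s,x_n)+[F(s,x_n)-F(s,x_*)]$, where the first summand vanishes by Lemma~\ref{lem:conv-subint}(ii) and the second by property \eqref{WT} applied to $F\in\WTC$ along $x_n\to x_*$; the $h$-term is identical. Since $\{F_n(\cdot,x_n(\cdot))\}$ is $L^1_{loc}$-bounded, Lemma~\ref{lem:25.04} applied with the fixed (constant, hence uniformly convergent) sequence $\phi_n\equiv y_*$ then gives $\int_0^t[F_n(s,x_n(s))-F(s,x_*(s))]\,y_*(s)\,ds\to0$, uniformly in $t$ after interpolation. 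A final Gronwall estimate closes the argument and produces uniform convergence $y_n\to y_*$.

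The genuinely delicate points are twofold. First, the continuation argument in (i): one must guarantee that the perturbed solutions do not escape the reference tube before reaching $[T_1,T_2]$, which is precisely where the bootstrap between the Gronwall bound and the exit time $\tau_n$ is required. Second, and most importantly, passing the weak limit through the \emph{product} $F_n(s,x_n(s))\,y_n(s)$: the coefficient converges only in the integrated $\sigma_\Theta$ sense while $y_n$ converges uniformly, and these two modes cannot be combined naively. Routing the varying factor $y_n-y_*$ into the Gronwall kernel and reserving Lemma~\ref{lem:25.04} for the fixed factor $y_*$ is the device that reconciles them, and it is the crux of the proof.
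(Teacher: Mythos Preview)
Your argument is correct, but it follows a route different from the paper's.  For (i) the paper does not perform the Gronwall estimate you describe; instead it truncates the solutions $x_n$ at the exit time from a tube, invokes Ascoli--Arzel\`a on the resulting equicontinuous family, passes to the limit in the integral equation using only the $\sigma_\Theta$ convergence, and then appeals to uniqueness in $\LC$ to identify the limit with $x_*$; a subsequence argument closes.  Your approach is more direct and explicitly quantitative: you feed the Lipschitz $l$-bound of the limit $f$ into a Gronwall inequality, so that no subsequence extraction is needed and one obtains an actual rate $\sup|x_n-x_*|\le\ep_n\exp\big(\int_I l_f^j\big)$.  For (ii) the contrast is similar: the paper again argues by compactness, obtains a limit $\zeta$ of the truncated $y_n$, and only then applies Lemma~\ref{lem:25.04} with $\phi_n=\zeta_n\to\zeta$; you instead route the unknown factor $y_n-y_*$ into the Gronwall kernel and apply Lemma~\ref{lem:25.04} with the \emph{fixed} test function $\phi_n\equiv y_*$, which is a neat device.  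What each approach buys: the paper's compactness route uses $f\in\LC$ only through uniqueness and would transfer more readily to settings where explicit $l$-bounds for the limit are unavailable; your Gronwall route is shorter, avoids subsequences entirely, and makes the dependence on the data explicit.  The one place where you should be slightly more careful is the uniformity in $t$ of $\int_0^t[f_n-f](s,z_n(s))\,ds\to 0$: Lemma~\ref{lem:conv-subint}(ii) yields this only at rational endpoints, and the interpolation to all $t$ requires that the limit $f$ (not just the $f_n$) have $L^1_{loc}$-equicontinuous $m$-bounds, which is supplied by Proposition~\ref{prop:analog4.10paper}.
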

\begin{proof}
(i) We will prove the uniform convergence of $\big(x(\cdot,f_n,x_{0,n})\big)_\nin$ to  $x(\cdot,f,x_0)$ in $[0,T]$ for any $0< T<b_{f,x_0}$. The case $a_{f,x_0}<T<0$ is analogous. Denote
\begin{equation}
0<\rho =1+ \max\big\{ (|x_{0,n}|)_\nin,\  \|x(\cdot,f,x_0)\|_{L^\infty([0,T])}\big\}\, ,
\label{eq:DefRHO}
\end{equation}
and define
\begin{equation*}
z_n(t)=
\begin{cases}
x(t,f_n,x_{0,n}), & \text{if $0\le t< T_n$,} \\
x(T_n,f_n,x_{0,n}), & \text{if $T_n\le t\le T$.}
\end{cases}
\end{equation*}
where $T_n=\sup\{t\in[0,T]\mid|x(s,f_n,x_{0,n})|\le \rho,\, \forall\, s\in[0,t]\}$. Notice that by \eqref{eq:DefRHO} and by the continuity of $\big(x(\cdot,f_n,x_{0,n})\big)_\nin$, we have that $T_n>0$ for any $n\in\N$. In particular notice that $(z_n(\cdot))_{\nin}$ is uniformly bounded. Moreover, consider $j\in\N$ so that $\rho<j$ and let $(m_n(\cdot))_{\nin}=(m_{f_n}^j(\cdot))_{\nin}$ be the sequence of optimal $m$-bounds of $(f_n)_{\nin}$ on $B_j$. If $t_1,t_2\in[0,T_n)$, $t_1<t_2$, then
\begin{equation}
|z_n(t_1)-z_n(t_2)|\le \int_{t_1}^{t_2}\big|f_n\big(s,z_n(s)\big)\big|\, ds \le \int_{t_1}^{t_2}m_n(s)\, ds\, .
\label{eq:14.06-18:11}
\end{equation}
Fixed $\ep>0$,  since $E$ admits $L^1_{loc}$-equicontinuous $m$-bounds, there exists $\delta=\delta(T,\ep)>0$ such that, if $0\le t_1 \le t_2< T_n$, then the right-hand side in \eqref{eq:14.06-18:11} is smaller than $\ep$ whenever $t_2-t_1<\delta$. Notice that, in fact, the inequality $|z_n(t_1)-z_n(t_2)|<\ep$ is true on the whole interval $[0,T]$ whenever $t_2-t_1<\delta$ because in $[T_n,T]$ the difference on the left side of equation \eqref{eq:14.06-18:11} is zero. Thus, the sequence  $(z_n(\cdot))_{\nin}$ is equicontinuous. Then, Ascoli-Arzel\'a's theorem implies that $(z_n(\cdot))_{\nin}$ converges uniformly, up to a subsequence, to some continuous function $z:[0,T]\to \R^N$.
\par\vspace{0.05cm}
In order to conclude the proof, we prove that $z(\cdot)\equiv x(\cdot,f,x_0)$ in $[0,T]$. Define
\begin{equation}
T_0=\sup \{t\in[0,T]\mid |z(s)|<\rho-1/2\quad \forall \, s\in[0,t]\}\, ,
\label{eq:defT0}
\end{equation}
and notice that $T_0>0$ because  $(x_{0,n})_\nin$ converges to $x_0$ and $z(\cdot)$ is continuous.  Since $z_n(\cdot)$ converges uniformly to $z(\cdot)$ in $[0,T]$, then there exists $n_0\in\N$ such that if $n>n_0$, then
\begin{equation*}
|z_n(t)|<\rho-1/4\qquad \forall \, t\in[0,T_0]\, .
\end{equation*}
Therefore, for any $t\in[0,T_0]$  and for any $n>n_0$ one has  $z_n(t)=x(t,f_n,x_{0,n})$ and thus
\begin{equation}
z_n(t)=x_{0,n}+\int_0^tf_n\big(s,z_n(s)\big)\, ds\, ,\qquad t\in[0,T_0]\, ,\ n>n_0\, .
\label{eq:14.06-19:35}
\end{equation}
Now let us fix $ t\in[0,T_0]\cap \Q$ and consider the compact set $\mathcal{K}=\{z_n(\cdot)\mid\nin\}\cup\{z(\cdot)\}\subset C\big([0,t],\R^N\big)$. Notice that $\mathcal{K}\subset\mathcal{K}^{[0,t]}_j$ for the previously identified $j\in\N$. Moreover, remind that $(f_n)_{\nin}$ converges to $f$ in $\sigma_\Theta$, $(z_n(\cdot))_{\nin}$ converges uniformly to $z(\cdot)$ in $[0,T]$ and $(x_{0,n})_{\nin}$ converges to $x_0$ as $\nti$. Then, passing to the limit in \eqref{eq:14.06-19:35}, we have that
\begin{equation*}
z(t)=x_0+\int_0^tf\big(s,z(s)\big)\, ds\qquad \text{for }  t\in[0,T_0]\cap \Q\, .
\end{equation*}
As a matter of fact, the equality holds on the whole interval $[0,T_0]$ because of the continuity of $z(\cdot)$ and of the integral operator. Therefore, $z(\cdot)$ coincides with $x(\cdot,f,x_0)$ on  $[0,T_0]$. We prove that $T_0=T$ in order to conclude the proof. Otherwise, by \eqref{eq:defT0} and by the continuity of $z(\cdot)$, one would have $|z(T_0)|=|x(T_0,f,x_0)|=\rho-1/2$, which contradicts \eqref{eq:DefRHO}. Hence, $T_0=T$, as claimed, and thus for any  $t\in [0,T]$ we have that $x(t,f,x_0)= z(t)$ and $x(t,f_n,x_{0,n})= z_n(t)$ for any $\nin$, which concludes the proof of (i).\par\smallskip
(ii) In order to simplify the notation, let us denote by $x_n(\cdot)=x(\cdot,f_n,x_{0,n})$,  $y_n(\cdot)=y(\cdot,f_n,F_n,h_n,x_{0,n}, y_{0,n})$, $x(\cdot)=x(\cdot,f,x_0)$, and $y(\cdot)=y(\cdot,f,F,h,x_0,  y_0)$. Consider $0< T<b_{f,x_0}$ and, as well as we did in \eqref{eq:DefRHO} of  (i), define $0<\rho =1+ \max\big\{ (|y_{0,n}|)_\nin,\  \|y(\cdot)\|_{L^\infty([0,T])}\big\}$. Define the functions
\begin{equation*}
\zeta_n(t)=
\begin{cases}
y_n(t), & \text{if $0\le t< T_n$,} \\
y_n(T_n), & \text{if $T_n\le t\le T$.}
\end{cases}
\end{equation*}
where $T_n=\sup\{t\in[0,T]\mid|y_n(s)|\le \rho,\, \forall\, s\in[0,t]\}$. Then, thanks to the assumptions on  $B\subset\WTC\big(\R^{N\times N}\big)$ and $C\subset\WTC$ and reasoning like in \eqref{eq:14.06-18:11} one can easily prove that the sequence $\big(\zeta_n(\cdot)\big)_\nin$ is equicontinuous. Thus, since $\big(\zeta_n(\cdot)\big)_\nin$ is also uniformly bounded by construction, once again we obtain that it converges uniformly up to a subsequence to some $\zeta(\cdot)\in C\big([0,T]\big)$. Therefore, defining $T_0$ similarly to \eqref{eq:defT0}, we have that there exists $n_0\in\N$ such that if $n>n_0$, then for any $t\in [0,T_0]$ one has that
\begin{equation*}
\zeta_n(t)=y_{0,n}+\int_0^tF_n\big(s,x_n(s)\big)\zeta_n(s)\, ds+\int_0^th_n\big(s,x_n(s)\big)\, .
\end{equation*}
Hence, using the fact that  $(h_n)_\nin$ converges to $h$ in $(\WTC,\sigma_\Theta)$, $( y_{0,n})_\nin$  converges to $ y_0\in\R^N$, $(F_n)_\nin$ converges to $F$ in $\big(\WTC\big(\R^{N\times N}\big),\sigma_\Theta\big)$ and applying Lemma \ref{lem:25.04} with $a_n(t)=F_n\big(t,x_n(t)\big)$, $a(t)=F\big(t,x(t)\big)$, $\phi_n(t)=\zeta_n(t)$ and $\phi(t)=\zeta(t)$, one has that passing to the limit as $\nti$
\begin{equation*}
\zeta(t)=y_{0}+\int_0^tF\big(s,x_n(s)\big)\zeta(s)\, ds+\int_0^th\big(s,x(s)\big)\qquad \text{for }  t\in[0,T_0]\cap \Q\, .
\end{equation*}
Reasoning as in the last part of (i), one obtains the previous inequality on the whole interval $[0,T_0]$ and eventually proves that $T_0=T$, which ends the proof.
\end{proof}
Consider $f\in\LC$, a suitable set of moduli of continuity $\Theta=(\theta_j)_{j\in\N}$, and the family of differential equations $\dot x =g(t,x)$, where $g\in\mathrm{Hull}_{(\LC,\sigma_\Theta)}(f)$. With the notation introduced in Theorem \ref{thm:05.07-13:44}, let us denote by $\U_1$ the subset of $\R\times\mathrm{Hull}_{(\LC,\sigma_\Theta)}(f)\times\R^N$ given by
\begin{equation*}
\U_1=\bigcup_{\substack{g\in\mathrm{Hull}_{(\LC,\sigma_\Theta)}(f)\,,\\x\in\R^N}}
\{(t,g,x)\mid t\in I_{g,x}\}\,.
\end{equation*}
Analogously, let $f\in\LC$, $F\in \WTC(\R^{N\times N})$ and $h\in \WTC$, where $\Theta=(\theta_j)_{j\in\N}$ is a suitable set of moduli of continuity, and consider the family of differential equations of the type~\eqref{eq:system} for $(g,G,k)\in \Hu=\mathrm{Hull}_{\left(\LC\times \WTC\times\WTC,\sigma_\Theta\times\sigma_\Theta\times\sigma_\Theta\right)}(f,F,h)$, where the hull is constructed as in Definition~\ref{def:Hull}. Then, we denote by $\U_2$ the subset of $\R\times\Hu\times\R^N\!\!\times\R^N$ given by
\begin{equation*}
\U_2=\bigcup_{\substack{(g,G,k)\in\Hu\,,\\ x_0\in\R^N}}\{(t,g,G,k,x_0,y_0)\mid t\in I_{g,x_0}\,,\; y_0\in\R^n\}\,.
\end{equation*}
With the previous notation we can state the following theorem.
\begin{thm}
Let the functions $f\in\LC$, $F\in \WTC(\R^{N\times N})$ and $h\in \WTC$ have $L^1_{loc}$-equicontinuous $m$-bounds, where $\Theta=(\theta_j)_{j\in\N}$ is the suitable set of moduly of continuity given by the $m$-bounds of $f$ as shown in {\rm Remark \ref{rmk:THETAjfunc}}.
\begin{itemize}
\item[(i)] The set $\U_1$ is open in $\R\times\mathrm{Hull}_{(\LC,\sigma_\Theta)}(f)\times\R^N$ and the map
\begin{equation*}
\begin{split}
\Pi\colon  \ \U_1\subset \R\times\mathrm{Hull}_{(\LC,\sigma_\Theta)}(f)\times\R^N\  &\to\ \ \mathrm{Hull}_{(\LC,\sigma_\Theta)}(f)\times\R^N\\
\ (t,g,x_0)\quad \qquad \qquad &\mapsto\qquad \big(g_t, x(t,g,x_0)\big)
\end{split}
\end{equation*}
defines a local continuous skew-product flow on $\mathrm{Hull}_{(\LC,\sigma_\Theta)}(f)\times\R^N$.
\item[(ii)] The set $\U_2$ is open in $\R\times \Hu\times \R^N\times
\R^N$ and the map
\begin{equation*}
\begin{split}
\qquad\quad\;\;\Psi\colon \ \U_2\subset \R\times\Hu\times \R^N\!\!\times \R^N &\to\qquad\qquad\qquad\Hu\times \R^N \!\!\times \R^N\\
 (t,g,G,k,x_0, y_0) \  \ & \mapsto\;  \big(g_t, G_t, k_t ,x(t,g,x_0),y(t,g,G,k,x_0, y_0)\big)
\end{split}
\end{equation*}
defines a local continuous skew-product flow on $\Hu\times \R^N\!\!\times \R^N$.
\end{itemize}
\label{thm:contFlowTtheta}
\end{thm}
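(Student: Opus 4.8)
The plan is to verify, for each of the two maps, the three defining features of a local continuous skew-product flow: the cocycle (flow) identities, joint continuity on the domain, and openness of the domain. Since $(\LC,\sigma_\Theta)$, $(\WTC,\sigma_\Theta)$ and the relevant product spaces are metric (hence so are the hulls with their induced topologies), both continuity and openness may be checked sequentially throughout. I will treat (i) in detail and obtain (ii) by the same scheme applied to the triangular system, using both components of Theorem~\ref{thm:ContODETthetaE} and the uniqueness statement of Corollary~\ref{cor:14:07-21:05}. Note that by Corollary~\ref{cor:17-05_13:35} every element of the hull inherits $L^1_{loc}$-equicontinuous $m$-bounds, so that Theorem~\ref{thm:ContODETthetaE} may be applied with $E=\mathrm{Hull}_{(\LC,\sigma_\Theta)}(f)$ and the same $\Theta$.

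First, the algebraic (cocycle) properties. By Theorem~\ref{thm:continuityBASE} and its corollary, $(t,g)\mapsto g_t$ is a continuous flow on the hull, with $g_0=g$ and $(g_s)_t=g_{t+s}$ (for (ii) this is applied componentwise to $(g,G,k)$). For the fibre component, fix $(g,x_0)$ and $s\in I_{g,x_0}$ and set $u(t)=x(t+s,g,x_0)$; then $u(0)=x(s,g,x_0)$ and $\dot u(t)=g(t+s,u(t))=g_s(t,u(t))$, so by uniqueness in Theorem~\ref{thm:05.07-13:44} one gets the cocycle identity $x(t+s,g,x_0)=x(t,g_s,x(s,g,x_0))$. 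Together with $x(0,g,x_0)=x_0$ this gives $\Pi(0,g,x_0)=(g,x_0)$ and $\Pi(t+s,\cdot)=\Pi(t,\Pi(s,\cdot))$. For (ii) the analogous identity for the $y$-component follows the same way: differentiating $v(t)=y(t+s,g,G,k,x_0,y_0)$ and inserting the already established $x$-cocycle shows that $v$ solves the system driven by $(g_s,G_s,k_s)$ with data $\big(x(s,g,x_0),y(s,g,G,k,x_0,y_0)\big)$, so uniqueness in Corollary~\ref{cor:14:07-21:05} yields the desired relation.

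Second, joint continuity. Let $(t_n,g_n,x_n)\to(t_0,g_0,x_0)$ in $\U_1$. The base component is continuous by Theorem~\ref{thm:continuityBASE}. For the fibre I split
\[
|x(t_n,g_n,x_n)-x(t_0,g_0,x_0)|\le |x(t_n,g_n,x_n)-x(t_n,g_0,x_0)|+|x(t_n,g_0,x_0)-x(t_0,g_0,x_0)|.
\]
The second term tends to $0$ since $x(\cdot,g_0,x_0)$ is continuous and $t_n\to t_0$. For the first, fix a compact $[T_1,T_2]\subset I_{g_0,x_0}$ with $t_0$ in its interior; by Theorem~\ref{thm:ContODETthetaE}(i) the solutions $x(\cdot,g_n,x_n)$ are defined on $[T_1,T_2]$ for $n$ large and converge to $x(\cdot,g_0,x_0)$ uniformly there, so $\sup_{[T_1,T_2]}|x(\cdot,g_n,x_n)-x(\cdot,g_0,x_0)|\to0$, and since $t_n\in[T_1,T_2]$ eventually the first term also vanishes. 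This proves continuity of $\Pi$ on $\U_1$; for (ii) the identical splitting, now also invoking Theorem~\ref{thm:ContODETthetaE}(ii), handles the $y$-component.

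Finally, openness of the domain, which I expect to be the main obstacle, as it is precisely where one must control the \emph{existence interval} of nearby solutions rather than merely their values. Arguing by contradiction, suppose $(t_0,g_0,x_0)\in\U_1$ yet there are $(t_n,g_n,x_n)\to(t_0,g_0,x_0)$ with $t_n\notin I_{g_n,x_n}$. Choosing $[T_1,T_2]\subset I_{g_0,x_0}$ with $t_0$ interior, the construction in the proof of Theorem~\ref{thm:ContODETthetaE}(i), where the truncated solutions converge uniformly to $x(\cdot,g_0,x_0)$, which stays strictly inside the chosen ball, forces $x(\cdot,g_n,x_n)$ to be defined on all of $[T_1,T_2]$ for $n$ large; hence $t_n\in(T_1,T_2)\subset I_{g_n,x_n}$ eventually, a contradiction. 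Thus $\U_1$ is open. For (ii), the maximal interval of the $y$-equation coincides with $I_{g,x_0}$ by Corollary~\ref{cor:14:07-21:05}, so the same argument applied to the triangular system shows that $\U_2$ is open, completing the proof.
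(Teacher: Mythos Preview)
Your proof is correct and follows essentially the same approach as the paper, which simply states that the result is a direct consequence of Theorem~\ref{thm:continuityBASE}, Theorem~\ref{thm:ContODETthetaE}, and Corollary~\ref{cor:17-05_13:35}. You have unpacked precisely these ingredients: Corollary~\ref{cor:17-05_13:35} for the inheritance of $L^1_{loc}$-equicontinuous $m$-bounds on the hull, Theorem~\ref{thm:continuityBASE} for continuity of the base flow, and Theorem~\ref{thm:ContODETthetaE} both for joint continuity of the fibre component and (via the implicit eventual containment $[T_1,T_2]\subset I_{g_n,x_n}$ established in its proof) for openness of the domain.
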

\begin{proof}
The proof is a direct consequence of Theorem \ref{thm:continuityBASE}, Theorem \ref{thm:ContODETthetaE} and Corollary~\ref{cor:17-05_13:35}.
\end{proof}
We end this section introducing the concept of linearized skew-product flow.
\begin{defn}\label{def:linearizedskew}
Let $f\in \LC$ be continuously differentiable with respect to $x$ for a.e. $t\in\R$ and with $L^1_{loc}$-equicontinuous $m$-bounds. Let $\Theta$ be defined as  in Remark~\ref{rmk:THETAjfunc} and denote by $J_xf\in \SC\big(\R^{N\times N}\big)$ the Jacobian of $f$ with respect to the coordinates~$x$.
\begin{itemize}
\item If $\Hu_\T=\mathrm{Hull}_{(\LC\times\TC,\T_\Theta\times\T_\Theta)}(f,  J_xf)$ and $\U$ is the subset of $\R\times\Hu_\T\times\R^N\!\!\times\R^N$ given by
\begin{equation*}
\U_\T=\bigcup_{\substack{(g,G)\in\Hu_\T\\ x_0\in\R^N}}\left\{ (t,g,G,x_0,y_0)\mid t\in I_{g,x_0},\,y_0\in\R^N\right \}\,,
\end{equation*}
then we call  \emph{a linearized skew-product flow} the map
\begin{equation*}\label{eq:skewp-lineT}
\begin{split}
\qquad\qquad\Psi_\T\colon \U_\T\subset \R\times\Hu_\T\times\R^N\!\!\times\R^N\quad&\to\quad\ \qquad\qquad\Hu_\T\times\R^N\!\!\times\R^N\\
\qquad\qquad(t,g,G, x_0, y_0)\qquad\quad&\mapsto\;\; \big(g_t,G_t, x(t,g,x_0), y(t,g,G, x_0, y_0)\big)\, ,
\end{split}
\end{equation*}
\item If $\Hu_\sigma=\mathrm{Hull}_{(\LC\times\WTC,\sigma_\Theta\times\sigma_\Theta)}(f,  J_xf)$, where $J_xf$ has $L^1_{loc}$-equicontinuous $m$-bounds, and if $\U_\sigma$ is the subset of $\R\times\Hu_\sigma\times\R^N\!\!\times\R^N$ given by
\begin{equation*}
\U_\sigma=\bigcup_{\substack{(g,G)\in\Hu\sigma\\ x_0\in\R^N}}\left\{ (t,g,G,x_0,y_0)\mid t\in I_{g,x_0},\,y_0\in\R^N\right \}\,,
\end{equation*}
then, we call  \emph{a $\sigma$-linearized skew-product flow} the map
\begin{equation*}\label{eq:skewp-lineS}
\begin{split}
\qquad\qquad\Psi_\sigma\colon \U_\sigma\subset \R\times\Hu_\sigma\times\R^N\!\!\times\R^N\quad&\to\quad\ \qquad\qquad\Hu_\sigma\times\R^N\!\!\times\R^N\\
\qquad\qquad(t,g,G, x_0, y_0)\qquad\quad&\mapsto\quad \big(g_t,G_t, x(t,g,x_0), y(t,g,G, x_0, y_0)\big)\, ,
\end{split}
\end{equation*}
\end{itemize}
\end{defn}
The use of the name \emph{linearized skew-product flow} is meaningful thanks to Theorem 6.1 in \cite{paper:LNO}. Moreover, one can easily check that  a slight generalization of the proof of such theorem gives meaning to the definition of \emph{$\sigma$-linearized skew-product flow}. Indeed, the weak topology $\sigma_\Theta$ used in Theorem \ref{thm:ContODETthetaE} is a good and weaker alternative. However, a stricter assumption on the $m$-bounds of the Jacobian of $f$ has to be assumed. In any case, one has that for every $(g,G)\in\Hu_\T$ ($(g,G)\in\Hu_\sigma$ resp.) and every $t\in I_{g,x_0}$
\begin{equation*}
\frac{\partial x(t,g,x_0)}{\partial x_0}\cdot  y_0= y(t,g,G,x_0, y_0)\,,
\end{equation*}
and therefore  in particular when $G\in\TC\big(\R^{N\times N}\big)\setminus\SC\big(\R^{N\times N}\big)$ ($G\in\WTC\big(\R^{N\times N}\big)\setminus\SC\big(\R^{N\times N}\big)$ resp.), i.e. when $g$ does not have continuous partial derivatives with respect to $x$ for almost every $t\in\R$.
\section{Exponential dichotomy and dichotomy spectrum}\label{expdich}
In this section we look more deeply into the properties of the linearized skew-product flows introduced at the end of last section. In particular we investigate the behavior of the solutions of the linear system when it has  exponential dichotomy and study its dichotomy spectrum. Firstly, let us state some assumptions and simplify  the notation.
\par\smallskip
Let $\Hu$ be either $\Hu_\T$ or $\Hu_\sigma$ as defined in Definition \ref{def:linearizedskew}, and assume that for each $(g,G)\in \Hu$.  the solutions of
\begin{equation*}
\begin{cases}
\dot x=g(t,x)\,, &x(0)=x_0\,,\\
\dot y=G(t,x)\,y\,, &y(0)= y_0\,,
\end{cases}
\end{equation*}
are globally defined or, equivalently, $x(t,g,x_0)$ is globally defined. As a consequence, the linearized skew-product flow is defined on the whole $\R\times\Hu\times\R^N\!\!\times\R^N$. \par\smallskip
Moreover, denoting by $\Omega=\Hu\times \R^N$, the continuous skew-product flow $\Psi$ in definition \ref{def:linearizedskew} can be read as a continuous linear skew-product flow
\begin{equation}\label{eq:skewp-linear}
\begin{array}{ccc}
\Psi\colon  \R\times\Omega\times\R^N &\to &\Omega\times\R^N \\[.1cm]
 \quad(t,\omega, y_0)&\mapsto& \big(\omega_t, y(t,\omega, y_0)\big)\, ,
\end{array}
\end{equation}
where the flow on the base  $\R\times\Omega\to \Omega$, $(t,\omega)\mapsto \omega_t$ is defined, for each $\omega=(g,G,x_0)$,  by $\omega_t=(g_t,G_t,x(t,g,x_0))$.
Additionally, consider the function $A:\Omega\to \R^{N\times N}$ defined as follows
\begin{equation*}
A(\omega)=\begin{cases}
\displaystyle \lim_{h\to 0}\frac{1}{h}\int_0^h G\big(s, x(s,g,x_0)\big)\, ds&\text{if the limit exists}\\[.1pt]
0&\text{otherwise.}
\end{cases}
\end{equation*}
Notice that, in fact,
\begin{equation}\label{eq:08.06-12:50}
A(\omega_t)=G\big(t,x(t,g,x_0)\big) \qquad\text{for a.e. }t\in\R.
\end{equation}
Indeed, fixed $\omega_t=\big(g_t,G_t,x(t,g,x_0)\big)$ one has
\begin{equation*}
\begin{split}
\lim_{h\to 0}\frac{1}{h}\int_0^h \!\!G_t\Big(s, x\big(s,g_t,x(t,g,x_0)\big)\Big)\, ds&=\lim_{h\to 0}\frac{1}{h}\int_0^h \!\!G\big(s+t, x(s+t,g,x_0)\big)\, ds\\
=\lim_{h\to 0}\frac{1}{h}\int_t^{t+h} \!\!G\big(u, x(u,g,x_0)\big)\, du&=G(t,x(t,g,x_0)) \qquad\text{for a.e. }t\in\R,
\end{split}
\end{equation*}
which implies \eqref{eq:08.06-12:50}.\par\smallskip
Then, the family of systems $\dot y=G(t,x(t,g,x_0))\,y$,  with $\omega=(g,G,x_0)\in \Omega$, can be written  as
\begin{equation}\label{eq:famsysA}
\dot y=A(\omega_t)\,y\,,\quad \omega\in\Omega\,,
\end{equation}
 and if $\Phi(t,\omega)$ denotes the fundamental matrix solution of the system corresponding to $\omega$ with $\Phi(0,\omega)={\rm I}_N$, we have that $y(t,\omega,y_0)=\Phi(t,\omega)\,y_0$.

\begin{defn} Let $I$ be one of the half-lines $(-\infty,0]$, $[0,\infty)$ or the real line $\R$ and let $\Delta$ be a subset of $\Omega$.
We say that the linear skew-product flow~\eqref{eq:skewp-linear}, or that the family \eqref{eq:famsysA}, \emph{has exponential dichotomy on $I$} over the set $\Delta$ if there are a family of continuous projections
$P\colon\Delta\to \mathcal L(\R^N,\R^N)$, $\omega\mapsto P(\omega)$, and constants $K\ge 1$ and $\alpha>0$, such that for every $s$, $t\in I$ and every $\omega\in\Delta$
\begin{equation}\label{eq:dichotomy}
\begin{split}
 &\left\|\Phi(t,\omega)\,P(\omega)\,\Phi^{-1}(s,\omega)\right\|\le K\,e^{-\alpha\,(t-s)}\qquad\qquad   \text{ if }\;\; t\ge s, \\
 & \left\|\Phi(t,\omega)\,\big({\rm I}_N-\,P(\omega)\big)\,\Phi^{-1}(s,\omega)\right\|
 \le K\,e^{\alpha\,(t-s)}\;  \quad \text{ if }\;\; t\le s.
\end{split}
\end{equation}
When $\Delta$ reduces to a point $\omega=(f,G,x_0)$, it is said that the corresponding system $\dot y=A(\omega_t)\,y$, i.e. $\dot y=G(t,x(t,g,x_0)\,y$ \emph{has exponential dichotomy on $I$}.
If $I=\R$ the interval will be omitted from the definition.
\end{defn}
\begin{defn} The set $\mathds{A}(\omega)$ will denote the alpha limit set of a point  $\omega=(g,G,x_0)\in\Omega$, that is,  $\widehat\omega=(\widehat g, \widehat G,\widehat x_0)\in\mathds{A}(\omega)$  if there is a sequence  $(t_n)_{n\in\N}$ in $\R$ such that $t_n\downarrow -\infty$ and $\widehat \omega=\lim_{n\to\infty} \omega_{t_n}$
in the corresponding product topology, i.e. $(\widehat g, \widehat G,\widehat x_0)=\lim_{n\to\infty} (g_{t_n},G_{t_n},x(t_n,g,x_0))$.\par\smallskip
Analogously $\widehat\omega=(\widehat g, \widehat G,\widehat x_0)$ belongs to the omega limit set $\mathds{O}(\omega)$   if there is a sequence  $(t_n)_{n\in\N}$ in $\R$ such that $t_n\uparrow \infty$ and $\widehat \omega=\lim_{n\to\infty} \omega_{t_n}$, i.e.
$(\widehat g, \widehat G,\widehat x_0)=\lim_{n\to\infty} (g_{t_n},G_{t_n},x(t_n,g,x_0))$.
Finally, $\mathds{H}(\omega)$ will denote the closure in $\Omega$ of the set $\{\omega_t=(g_t,G_t,x(t,g,x_0))\mid t\in\R\}$ for the corresponding product topology on $\Omega$.
\end{defn}
The next result shows how the exponential dichotomy of a particular system can be transferred to the exponential dichotomy  of the skew-product flow over its alpha limit set, its omega limit set or its hull.
\begin{prop}\label{prop:expo-over}
Let $\omega=(g,G,x_0)\in\Omega$.
\begin{itemize}
\item[(i)] If the linear system $\dot y=A(\omega_t)\,y$ has  exponential dichotomy on $(-\infty,0]$, then the skew-product flow~\eqref{eq:skewp-linear} has  exponential dichotomy over the alpha limit set $\mathds{A}(\omega)\subset \Omega$.
\item[(ii)] If the linear system $\dot y=A(\omega_t)\,y$ has exponential dichotomy on $[0,\infty)$, then the skew-product flow~\eqref{eq:skewp-linear} has  exponential dichotomy over the omega limit set $\mathds{O}(\omega)\subset \Omega$.
\item[(iii)] If the linear system $\dot y=A(\omega_t)\,y$ has  exponential dichotomy, then the skew-product flow~\eqref{eq:skewp-linear} has  exponential dichotomy over the whole hull $\mathds{H}(\omega)\subset \Omega$.
\end{itemize}
\end{prop}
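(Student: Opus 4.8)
The plan is to handle the three parts by one common mechanism: propagate the given dichotomy along the base trajectory of $\omega$ and then pass to the limit along the sequences that define the points of $\mathds{A}(\omega)$, $\mathds{O}(\omega)$ or $\mathds{H}(\omega)$. The key ingredient is the cocycle identity $\Phi(t,\omega_\tau)=\Phi(t+\tau,\omega)\,\Phi^{-1}(\tau,\omega)$, valid for the linear flow~\eqref{eq:skewp-linear} by uniqueness of solutions. Assume $\dot y=A(\omega_t)\,y$ has the dichotomy projection $P=P(\omega)$ with constants $K,\alpha$, and define $P(\omega_\tau)=\Phi(\tau,\omega)\,P\,\Phi^{-1}(\tau,\omega)$. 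A direct substitution gives
\[ \Phi(t,\omega_\tau)\,P(\omega_\tau)\,\Phi^{-1}(s,\omega_\tau)=\Phi(t+\tau,\omega)\,P\,\Phi^{-1}(s+\tau,\omega)\,, \]
so the dichotomy estimate at $\omega_\tau$ for a pair $(s,t)$ is exactly the estimate at $\omega$ for the shifted pair $(s+\tau,t+\tau)$, with the same $K$ and $\alpha$; the analogous identity holds for ${\rm I}_N-P$.

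In case (iii) the half-line is all of $\R$, so every $\omega_\tau$ inherits the two inequalities of~\eqref{eq:dichotomy} on all of $\R$. In cases (i) and (ii) the estimate at $\omega_\tau$ is available precisely for those $(s,t)$ with $s+\tau,t+\tau$ in the half-line, and since $\tau\downarrow-\infty$ (resp. $\tau\uparrow+\infty$) along the defining sequence, the set of admissible pairs exhausts $\R$. Now fix $\widehat\omega$ in the relevant limit set and write $\widehat\omega=\lim_n\omega_{t_n}$. Taking $s=t=0$ in the first inequality gives $\|P(\omega_{t_n})\|\le K$, so the projections lie in a bounded subset of $\mathcal L(\R^N,\R^N)$ and, passing to a subsequence, converge to some $\widehat P$; since $P^2=P$ is a closed condition, $\widehat P$ is again a projection. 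By the continuity of the (linearized) skew-product flow, Theorem~\ref{thm:contFlowTtheta}, the maps $\Phi(\cdot,\omega_{t_n})$ and $\Phi^{-1}(\cdot,\omega_{t_n})$ converge to $\Phi(\cdot,\widehat\omega)$ and $\Phi^{-1}(\cdot,\widehat\omega)$ uniformly on compact time intervals, so for each fixed $(s,t)$ the estimates survive the limit and yield~\eqref{eq:dichotomy} at $\widehat\omega$, on all of $\R$, with the same $K,\alpha$.

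It remains to assemble these limit projections into a \emph{continuous} family on $\Delta\in\{\mathds{A}(\omega),\mathds{O}(\omega),\mathds{H}(\omega)\}$, and this is the step I expect to be the crux. The decisive point is that a dichotomy projection is intrinsic to the flow: under an existing exponential dichotomy one has $\mathrm{Range}\,\widehat P=\{v\mid\sup_{t\ge0}|\Phi(t,\widehat\omega)\,v|<\infty\}$ and $\mathrm{Ker}\,\widehat P=\{v\mid\sup_{t\le0}|\Phi(t,\widehat\omega)\,v|<\infty\}$, the exponentially contracting forward and backward directions. Hence $\widehat P=P(\widehat\omega)$ is uniquely determined, independent of the chosen subsequence, and the same characterization yields the invariance $P(\widehat\omega_t)=\Phi(t,\widehat\omega)\,P(\widehat\omega)\,\Phi^{-1}(t,\widehat\omega)$. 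Continuity then follows from compactness and uniqueness together: if $\widehat\omega_k\to\widehat\omega$ in $\Delta$, each $P(\widehat\omega_k)$ has norm $\le K$, and every limit point of $\big(P(\widehat\omega_k)\big)$ is, by the passage to the limit above, a dichotomy projection at $\widehat\omega$ with constants $K,\alpha$, hence equals $P(\widehat\omega)$; boundedness then forces $P(\widehat\omega_k)\to P(\widehat\omega)$. The main obstacle is therefore not transferring the exponential estimates, which is mechanical, but pinning the candidate projections down uniquely so that the resulting family is genuinely continuous; it is the uniformity of $K,\alpha$ across the whole trajectory, together with the compactness of the hull furnished by the equicontinuous $m$-bound hypotheses, that makes this uniqueness characterization effective.
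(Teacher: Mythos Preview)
Your argument is correct and follows essentially the same route as the paper's: propagate the projection along the trajectory via $P(\omega_\tau)=\Phi(\tau,\omega)\,P\,\Phi^{-1}(\tau,\omega)$, use the uniform bound $\|P(\omega_\tau)\|\le K$ to extract subsequential limits, pass to the limit in the dichotomy estimates by continuity of the flow, and then invoke uniqueness of the dichotomy projection to upgrade subsequential convergence to actual convergence and hence continuity of $\widehat\omega\mapsto P(\widehat\omega)$. The only difference is cosmetic: where the paper cites an external reference for uniqueness of the projection, you supply the explicit characterization of $\mathrm{Range}\,\widehat P$ and $\mathrm{Ker}\,\widehat P$ as the bounded forward and backward orbits, which is exactly the content of that reference.

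One small inaccuracy in your closing remark: you attribute the effectiveness of the argument partly to ``the compactness of the hull furnished by the equicontinuous $m$-bound hypotheses,'' but $\mathds{H}(\omega)$ is \emph{not} assumed compact here (indeed the paper explicitly notes a few lines later that $\mathds{H}(\omega)$ need not be compact, which is why Siegmund's approach is invoked for the spectrum). What actually drives the compactness step is simply that the projections live in the finite-dimensional space $\mathcal L(\R^N,\R^N)$, so norm-boundedness already gives convergent subsequences. Your proof does not actually use hull compactness anywhere, so this is only a misstatement in the commentary, not a gap in the argument.
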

\begin{proof} (i) Let  $P(\omega)$ be the projection corresponding to the exponential dichotomy on $(-\infty,0]$ for the system $\dot y=A(\omega_t)\,y$ and  define the family of
projections
\begin{equation}\label{eq:proj-r}
P(\omega_r)=\Phi(r,\omega)\,P(\omega)\,\Phi^{-1}(r,\omega)\,\quad \text{ for each } r\le 0\,.
\end{equation}
We deduce that $\,\left\|\Phi(t,\omega_r)\,P(\omega_r)\,\Phi^{-1}(s,\omega_r)\right\|=
\left\|\Phi(t+r,\omega)\,P(\omega)\,\Phi^{-1}(s+r,\omega) \right\|$ and $ \left\|\Phi(t,\omega_r)\,({\rm I}_N-\,P(\omega_r))\,\Phi^{-1}(s,\omega_r)\right\|=
 \left\|\Phi(t+r,\omega)\,({\rm I}_N-P(\omega))\,\Phi^{-1}(s+r,\omega) \right\|$
 and, consequently
\begin{equation}\label{eq:dicho-r}
\begin{split}
 &\left\|\Phi(t,\omega_r)\,P(\omega_r)\,\Phi^{-1}(s,\omega_r)\right\|\le K\,e^{-\alpha\,(t-s)}\qquad\qquad   \text{ if }\;\;s\le t\le -r, \\
 & \left\|\Phi(t,\omega_r)\,\big({\rm I}_N-\,P(\omega_r)\big)\,\Phi^{-1}(s,\omega_r)\right\|
 \le K\,e^{\alpha\,(t-s)}\;  \quad\text{ if }\;\; t\le s\le - r.
\end{split}
\end{equation}
Next we take $\widehat \omega\in \mathds{A}(\omega)$ with $\widehat \omega=\lim_{n\to\infty} \omega_{r_n}$ for a sequence $r_n\downarrow -\infty$. From~\eqref{eq:proj-r} and~\eqref{eq:dichotomy} we deduce that $\|P(\omega_r)\|\le K$ for every $r\le 0$ and hence, the sequence of projections $\{P(\omega_{r_n})\}_{n\in\N}$ admits a subsequence converging to a projection $P(\widehat\omega)$ whose uniqueness is guaranteed by Proposition~1.56 in~\cite{book:JONNF}. From this fact,~\eqref{eq:dicho-r} and the continuity of the flow on the base $\Omega$, we deduce that $\dot y=A(\widehat\omega_t)\,y$ admits exponential dichotomy with projection $P(\widehat\omega)$, that is
\begin{equation}\label{eq:dichoA}
\begin{split}
 &\left\|\Phi(t,\widehat\omega)\,P(\widehat\omega)\,\Phi^{-1}(s,\widehat\omega)\right\|\le K\,e^{-\alpha\,(t-s)}\qquad\qquad   \text{ if }\;\;t\ge s \\
 & \left\|\Phi(t,\widehat\omega)\,\big({\rm I}_N-\,P(\widehat\omega)\big)\,\Phi^{-1}(s,\widehat\omega)\right\|
 \le K\,e^{\alpha\,(t-s)}\;  \quad\text{ if }\;\; t\le s.
\end{split}
\end{equation}
In order to conclude the proof, we show the continuity of
\begin{equation*}
P:\mathds{A}(\omega)\to \mathcal L(\R^N,\R^N),\qquad\widehat \omega\mapsto P(\widehat \omega).
\end{equation*}
To the aim, consider a sequence $(\widehat\omega^n)_\nin$ in $\mathds{A}(\omega)$ converging to some $\widehat\omega\in\mathds{A}(\omega)$ and let us prove that $\big(P(\widehat\omega^n)\big)_\nin$ converges to $P(\widehat\omega)$. As before, from \eqref {eq:dichoA}, with $t=s$, one has that for all $\nin$: $\|P(\widehat\omega^n)\|\le K$ and thus, it converges, up to a subsequence, to a projection $\widehat P$, and again, from \eqref {eq:dichoA} for each $\nin$, the continuity of the flow on the base $\Omega$ and the uniqueness of the projection, we have that $\widehat P=P(\widehat\omega)$. Then, one has  the exponential dichotomy of the skew-product flow~\eqref{eq:skewp-linear} over~$\mathds{A}(\omega)$, as stated in (i). The proofs of (ii) and (iii) are omitted because analogous.
\end{proof}
We recall the definition of dichotomy spectrum, or Sacker-Sell spectrum, for one of the systems and for a subfamily of the family~\eqref{eq:famsysA}.
\begin{defn} Let $\omega\in\Omega$ be fixed. The \emph{dichotomy spectrum of  $\dot y=A(\omega_t)\,y$}, which will be denoted by $\Sigma(\omega)$,  is the set of
$\gamma\in\R$ such that $\,\dot y=\big(A(\omega_t)-\gamma\,{\rm I}_N\big)\,y\,$ does not have exponential dichotomy.  The resolvent set is $\rho(\omega)=\R\setminus \Sigma(\omega)$.
\end{defn}
\begin{defn}
Let $\Delta$ be a subset of $\Omega$. The \emph{dichotomy spectrum of the linear skew-product flow~\eqref{eq:skewp-linear} over $\Delta$}, denoted by $\Sigma(\Delta)$ is the set
of $\gamma\in\R$ such that the family $\,\dot y=\big(A(\omega_t)-\gamma\,{\rm I}_N \big)\,y\,$  does not have exponential dichotomy over $\Delta$.
\end{defn}
When $\Delta$ is an invariantly connected compact invariant set of $\Omega$, Sacker and Sell~\cite{sase} proved that $\Sigma(\Delta)$ is the union of $k$ compact intervals
\[\Sigma(\Delta)=[a_1,b_1]\cup\dots \cup [a_k,b_k]\,,\]
where $1\le k\le N$ and $a_1\le b_1<a_2\le b_2<\dots\le a_{k}\le b_k$.\par\smallskip
From Proposition~\ref{prop:expo-over} we deduce that $\Sigma(\omega)=\Sigma\big(\mathds{H}(\omega)\big)$ but $\mathds{H}(\omega)$ is not necessarily compact. Therefore, we follow Siegmund's approach in~\cite{paper:SS} to study the dichotomy spectrum $\Sigma(\omega)$.
He showed that either $\Sigma(\omega)$ is empty, or it is the whole $\R$, or there exists $k\in\N$, with $1\le k\le N$, such that
\[
\Sigma(\omega)= I_1 \cup[a_2,b_2]\cup\dots \cup [a_{k-1},b_{k-1}]\cup I_k\,,
\]
where $I_1$ is either $[a_1,b_1]$ or $(-\infty,b_1]$,  $I_k$ is either $[a_k,b_k]$ or $[a_k,\infty)$, and $a_1\le b_1<a_2\le b_2<\dots\le a_{k}\le b_k$. In addition, a decomposition of $\R\times\R^N$ in spectral manifolds holds, i.e.
\[\R\times\R^N=\mathcal{W}_0\oplus\dots\oplus\mathcal{W}_{k+1}\, ;\]
see~\cite{paper:SS} for details.
\par
He also proved that $\Sigma(\omega)=[a_1,b_1]\cup\dots \cup [a_k,b_k]$, with $1\le k\le N$, if and only if the system $\dot y=A(\omega_t)\,y$ has \emph{bounded growth}, i.e. there exist constants $K\ge 1$ and $\alpha\ge 0$ such that
\[ \|\Phi(t,\omega)\,\Phi^{-1}(s,\omega)\|\le K\,e^{\alpha\,|t-s|} \qquad \text{for}\quad t,s\in\R
\]
Moreover, in such a case the spectral manifolds $\mathcal{W}_0$ and $\mathcal{W}_{k+1}$ are trivial, i.e.  $\R\times\R^N=\mathcal{W}_1\oplus\dots\oplus\mathcal{W}_{k}$.\par\smallskip
We finish this section providing conditions under which Carath\'eodory systems have bounded growth and, as a consequence, the dichotomy spectrum $\Sigma(\omega)=\Sigma\big(\mathds{H}(\omega)\big)$ is a finite number of compact intervals as in the Sacker-Sell dichotomy spectrum.
\begin{prop}\label{prop:spectrum}
Let $\omega=(g,G,x_0)\in\Omega$ fixed.  Assume that  $G$ has $L^1_{loc}$-bounded $m$-bounds and that $x(\cdot,g,x_0)$  is  bounded. Then the system $\dot y=A(\omega_t)\,y$ has bounded growth and  $\Sigma(\omega)=[a_1,b_1]\cup\dots \cup [a_k,b_k]$.
\end{prop}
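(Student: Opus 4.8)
The plan is to reduce the bounded growth condition to an elementary Gronwall estimate on the fundamental matrix $\Phi(t,\omega)$, using that along the \emph{bounded} orbit the coefficient $A(\omega_t)$ is dominated by a single $m$-bound of $G$ whose integral over an interval grows at most linearly in the length of that interval. Once bounded growth is established, the description $\Sigma(\omega)=[a_1,b_1]\cup\dots\cup[a_k,b_k]$ is immediate from the equivalence of Siegmund recalled just above the statement, so there is essentially nothing left to prove for the spectral part.

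First I would exploit the boundedness of $x(\cdot,g,x_0)$: pick $R>0$ with $|x(t,g,x_0)|\le R$ for all $t\in\R$ and fix $j\in\N$ with $R<j$, so that $x(t,g,x_0)\in B_j$ for every $t$. Writing $m_G^j$ for the optimal $m$-bound of $G$ on $B_j$, the identity~\eqref{eq:08.06-12:50} gives $A(\omega_t)=G(t,x(t,g,x_0))$ for a.e.\ $t$, whence $\|A(\omega_t)\|\le c\,m_G^j(t)$ a.e., where $c$ is the fixed constant relating the operator norm on $\R^{N\times N}$ to the norm used in the $m$-bound. Since $G$ has $L^1_{loc}$-bounded $m$-bounds, the orbit $\{G_t\mid t\in\R\}$ has $L^1_{loc}$-bounded $m$-bounds (Definition~\ref{def:05.07-13:05}), and since the optimal $m$-bounds $m^j_{G_t}(\cdot)=m_G^j(\,\cdot+t)$ are dominated by that $L^1_{loc}$-bounded family, this unwinds to $\sup_{t\in\R}\int_{t-r}^{t+r}m_G^j(u)\,du<\infty$ for every $r>0$. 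Taking $r=1/2$ produces a constant $C_j:=\sup_{a\in\R}\int_a^{a+1}m_G^j(u)\,du<\infty$.

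The key estimate is then a splitting of $\int_s^t m_G^j$ into unit-length subintervals: for $s<t$ write $t-s=n+\tau$ with $n=\lfloor t-s\rfloor$ and $0\le\tau<1$, so that
\[
\int_s^t m_G^j(u)\,du\le (n+1)\,C_j\le C_j\,(t-s)+C_j .
\]
By the standard Gronwall bound for linear systems one has $\|\Phi(t,\omega)\,\Phi^{-1}(s,\omega)\|\le\exp\!\big(\int_s^t\|A(\omega_u)\|\,du\big)$ for $t\ge s$, together with the analogous backward estimate for $t\le s$. Combining these with the displayed inequality gives
\[
\|\Phi(t,\omega)\,\Phi^{-1}(s,\omega)\|\le e^{cC_j}\,e^{cC_j\,|t-s|},\qquad t,s\in\R,
\]
so the system has bounded growth with $K=e^{cC_j}\ge 1$ and $\alpha=cC_j\ge 0$.

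Having established bounded growth, the structure $\Sigma(\omega)=[a_1,b_1]\cup\dots\cup[a_k,b_k]$ with $1\le k\le N$ follows directly from the cited equivalence, concluding the proof. I do not expect a genuine obstacle: the only point requiring a little care is the bookkeeping that converts the hypothesis ``$G$ has $L^1_{loc}$-bounded $m$-bounds'' (a statement about the translation orbit $\{G_t\}$) into the uniform unit-interval bound $C_j$, together with the observation that it is $m_G^j$ evaluated along the \emph{bounded} trajectory $x(\cdot,g,x_0)$, rather than any global bound on all of $\R^N$, that controls $A(\omega_t)$.
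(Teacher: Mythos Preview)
Your proposal is correct and follows essentially the same route as the paper: bound the trajectory inside some $B_j$, use the $L^1_{loc}$-bounded $m$-bounds of $G$ to get a uniform unit-interval bound $\sup_{a}\int_a^{a+1}m_G^j<\infty$, apply Gronwall to $\Phi(t,\omega)\Phi^{-1}(s,\omega)$, split $[s,t]$ into unit subintervals to obtain bounded growth, and then invoke Siegmund's equivalence for the spectral conclusion. The only cosmetic differences are your explicit bookkeeping of the norm-equivalence constant $c$ and the backward case $t\le s$, which the paper handles implicitly.
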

\begin{proof} Using the notation introduced in Theorem \ref{thm:05.07-13:44}, let $I_{g,x_0}$ be the interval of definition of $x(\cdot,g,x_0)$ and let $j\in\N$ such that $\|x(\cdot,g,x_0)\|_{L^\infty(I_{g,x_0})}\le j$. Since $G$, has  $L^1_{loc}$-bounded $m$-bounds, there is a positive constant $\alpha$ such that
\begin{equation}
\sup_{s\in\R}\int_0^1 m^j(r+s)\,dr\le \alpha
\label{eq:20-07_13:29}
\end{equation}
where $m^j$ is the $m$-bound of $G$ on $B_j$ satisfying the assumption of $L^1_{loc}$-boundedness. Consider $s,t\in I_{g,x_0}$ and, for simplicity, assume that $s\le t$, the other case being analogous. Notice that $\Phi(t,\omega)\,\Phi^{-1}(s,\omega)\,y_0=y(t,\omega,s,y_0)$. Then
\[|y(t,\omega,s,y_0)|\le |y_0|+\int_s^t \|G(u,x(u,g,x_0))\|\,|y(u,\omega,s,y_0)|\,du\,,\] and Gronwall inequality
provides
\begin{equation}\label{eq:ineqgrowth}
\begin{split}
|y(t,\omega,s,y_0)| & \le |y_0|\exp\bigg(\int_s^t\|G(u,x(u,g,x_0))\|\,du\bigg)\\
& = |y_0|\exp\bigg(\int_0^{t-s}\|G_s(r,x(r+s,g,x_0))\|\,dr\bigg)\,.
\end{split}
\end{equation}
Using the equivalence of the $2$-norm and the matrix norm, the inequality \eqref{eq:20-07_13:29}, and covering the interval $[0,t-s]$ with intervals of unit length, \eqref{eq:ineqgrowth} yields
\[|y(t,\omega,s,y_0)|\le K\, e^{\alpha\, (t-s)}|y_0|\]
for an appropriate constant $K\ge 1$, which finishes the proof.
\end{proof}
Recall that $\Omega$ is defined as either $ \Hu_\T\times\R^N$ or $ \Hu_\sigma\times\R^N$ where $\Hu_\sigma$ and $ \Hu_\T$ are defined in Definition \ref{def:linearizedskew}. Notice that in the case in which $\Omega= \Hu_\T\times\R^N$ both the assumptions of Proposition~\ref{prop:spectrum} are  necessary, whereas if $\Omega= \Hu_\sigma\times\R^N$, then the $L^1_{loc}$-boundedness for the $m$-bounds of $G$ is already implied by the $L^1_{loc}$-equicontinuity for the $m$-bounds of $J_x f$ thanks to Corollary \ref{cor:17-05_13:35}.
\section{Pullback and global attractors for Carath\'{e}odory ODEs}\label{sec-pullback}
This section deals with pullback and global attractors for Carath\'{e}odory ODEs as an application of the continuity of the skew-product flow. In particular we show how, starting from specific properties on the solutions of an initial problem $\dot x= f(t,x)$, it is possible to obtain the existence of a bounded pullback attractor for the processes induced by systems with vector field in either the alpha limit set of $f$, the omega limit set of $f$, or the whole hull of $f$. Furthermore, conditions for the existence of pullback and global attractors for the induced skew-product flow are also provided.
 \par\smallskip
 Let us recall two cases in which, depending on the properties on $f$ and on the used topology $\T$, one has a continuous skew-product flow on $\mathrm{Hull}_{(\LC,\T)}(f)\times\R^N$.\par\smallskip
\begin{itemize}
\item  \textbf{Case 1:} $f\in(\LC,\sigma_\Theta)$ with $L^1_{loc}$-equicontinuous $m$-bounds;  see Theorem~\ref{thm:contFlowTtheta}.\smallskip
\item \textbf{Case 2:} $f\in(\LC,\T_D)$ with $L^p_{loc}$-bounded $l$-bounds; see \cite [Theorem 5.9]{paper:LNO}.
\end{itemize}
In the rest of the section, $(\LC,\T)$ will denote any of the topological spaces outlined in the previous cases.
\subsection{Statement and definitions}
We set the environment in which we subsequently develop our results. Let $f$ be a function in $\LC$ and consider the nonautonomous initial value problems
\begin{equation}\label{eq:ivp}
\dot x=f(t,x) ,\quad x(r)=x_0,\qquad  \text{with } r\in \R \text{ and } x_0\in\R^N\!.
\end{equation}
If $f$ is such that for any $x_0\in\R^N$ and any $r\in\R$, the unique solution $x(\cdot,f,r,x_0)$ is defined on $[r,\infty)$, then a process is induced by
\begin{equation}\label{eq:process}
S_f(t+r,r)\,x_0=x(t+r,f,r,x_0)=x(t,f_r,x_0)\,,
\end{equation}
where $t\ge 0$, $r\in \R$ and $ x_0\in\R^N$. We recall that, given a metric space $(X,d)$ a process is a family of continuous maps $\{S(t,s)\mid t\ge s\}\subset \mathcal{C}(X)$ satisfying
\begin{itemize}
\item $S(t,t)\,x=x$ for every $t\in\R$ and $x\in X$.
\item $S(t,s)=S(t,r)\,S(r,s)$, for every $t\ge r \ge s$.
\item $(t,s,x)\mapsto S(t,s)\,x$ is continuous for every $t\ge s$ and $x\in X$.
\end{itemize}
\smallskip
The different types of ultimately bounded character of the solutions are defined in terms of the process as follows.
\begin{defn} Let $f\in\LC$. The solutions of $\dot x=f(t,x)$ are said to be
   \begin{itemize}
     \item[$\bullet$] \emph{uniformly ultimately bounded} if there is a positive constant $c>0$ such that for every $d>0$ there is a time $T(d)>0$ satisfying
     \[ \qquad|S_f(t+r,r)\,x_0|\le c \quad \text{ for every } r\in\R,\,t\ge T(d) \;\text{ and }\; |x_0|\le d\,;\]
      \item[$\bullet$] \emph{uniformly ultimately bounded on $[\tau,\infty)$}  if there is a positive constant $c(\tau)$ such that for every $d>0$ there is a  time $T(\tau,d)>0$ satisfying
       \begin{equation}\label{desi:ultimate}
         |S_f(t+r,r)\,x_0|\le c(\tau)\,,
       \end{equation}
whenever $r\ge \tau$, $t\ge T(\tau,d)$ and $|x_0|\le d$.
     \end{itemize}
\end{defn}
The definitions of pullback attractor for a process is also hereby recalled.
\begin{defn}\label{def:attractor}
A family of subsets $\mathcal A(\cdot)=\{\mathcal A(t) \mid t \in \R\}$ of the phase space $X$  is said to be a \emph{pullback attractor for the process $S(\cdot,\cdot)$} if
\begin{itemize}
\item[(i)] $\mathcal A(t)$ is compact for each $t\in\R$;
\item[(ii)] $\mathcal A(\cdot)$ is invariant, that is, $S(t, s)\,\mathcal A(s) = \mathcal A(t)$ for all $t\ge  s$;
\item[(iii)] for each $t\in \R$, $\mathcal A(t)$ pullback attracts bounded sets at time t, i.e. for any bounded set $B\subset X$ one has
    \[\lim_{s\to-\infty}\dist(S(t, s)\,B,\mathcal A(t)) = 0\,,\] where $\dist(A,B)$ is the \emph{Hausdorff semi-distance} between two nonempty sets $A$, $B\subset X$  i.e. $\dist(A,B) := \sup_{x\in A} \inf_{y\in B}d(x,y)$.
\item[(iv)] $\mathcal A$ is the minimal family of closed sets with property (iii).
\end{itemize}
\end{defn}
The notion of pullback absorbing family will also be necessary.
\begin{defn}\label{def:absorbing} Let  $S(\cdot,\cdot)$ be a process on a metric space $(X,d)$. A family of nonempty bounded sets $\{B(t)\subset X\mid t\in\R\}$ \emph{pullback absorbs bounded sets}, if for every $t\in\R$ and every bounded subset $D$ of $X$ there exists a time  $T(t,D)> 0$ such~that
\begin{equation*}
S(t,t-s)\,D \subset B(t) \quad \text{ for every }  s \ge T(t,D)\,.
\end{equation*}
We also say that $\{B(t)\subset X\mid t\in\R\}$ is a pullback bounded absorbing family. If for all $t\in\R$ one has $B(t)\equiv B\subset X$ we will say that $B$ is a pullback absorbing set.
\end{defn}
\begin{defn} A process $S(\cdot,\cdot)$ is \emph{pullback strongly bounded dissipative  on  $(-\infty,\tau]$} if there exists a family $\{B(t)\subset X\mid t\in\R\}$ of pullback bounded absorbing  sets such that for every bounded subset $D\subset X$, there is a time $T(\tau,D)>0$ so that
\begin{equation}\label{def:strpullbound}
S(t,t-s)\,D\subset B(\tau) \quad \text{ for every } t\le \tau  \text{ and } s\ge T(\tau,D)\,.
\end{equation}
\end{defn}
\begin{rmk}\label{rmk:bound-attrac}
In the finite dimensional case, the existence of a pullback bounded absorbing family ensures the existence of a pullback attractor  (see, e.g., Carvalho et al.~\cite{book:CLR} and Kloeden and Rasmussen~\cite{book:KR}). If in addition the family satisfies~\eqref{def:strpullbound}, then
the pullback attractor $\{\mathcal A(t) \mid t \in \R\}$ \emph{is bounded in the past} because
$\bigcup_{t\le \tau}\mathcal A(t)\subset B(\tau)\,.$\par\smallskip
Finally, if there is a bounded set $B$ such that for every bounded subset $D\subset X$ there is a time $T(D)>0$ so that
\begin{equation}\label{eq:bounabs}
S(t,t-s)\,D\subset B \quad \text{ for every } t\in\R \text{ and } s\ge T(D)\,,
\end{equation}
then there is a \emph{bounded pullback attractor}.
\end{rmk}
\begin{rmk}\label{rmk:unifultboun}
 Notice that condition~\eqref{eq:bounabs} is equivalent to the uniformly ultimately bounded character of the solutions of the system.
\end{rmk}
\par\smallskip
Finally, we recall the definitions of pullback and global attractor for a skew-product flow. Let $f\in\LC$ and let $\T$ be a topology in $\LC$ such that the induced local skew-product flow
\begin{equation}\label{skew-product}
\begin{split}
\Pi\colon  \U\subset \R\times\mathrm{Hull}_{(\LC,\T)}(f)\times\R^N\  &\to\ \ \mathrm{Hull}_{(\LC,\T)}(f)\times\R^N\\
\ (t,g,x_0)\quad \qquad \qquad &\mapsto\qquad \big(g_t, x(t,g,x_0)\big)\,,
\end{split}
\end{equation}
is continuous.
\begin{defn}
Assume that for any $g\in\mathrm{Hull}_{(\LC,\T)}(f)$ and any $x_0\in\R^N$, the solution $x(\cdot,g,x_0)$ of $\ \dot x=g(t,x),\  x(0)=x_0,\ $ is defined on $[0,\infty)$, i.e.  the skew-product semiflow \eqref{skew-product} is defined on $\R^+\!\!\times\mathrm{Hull}_{(\LC,\T)}(f)\times\R^N$.\vspace{0.2cm}
\begin{itemize}
\item A family $\widehat A=\{ A_g\mid g\in \mathrm{Hull}_{(\LC,\T)}(f)\}$ of nonempty, compact sets of $\R^N$ is said to be a \emph{pullback attractor for the skew-product semiflow} if it is invariant, i.e.
\[ x(t,g,A_g)=A_{g_t} \quad \text{ for each } t\ge 0 \;\text{ and }\; g\in \mathrm{Hull}_{(\LC,\T)}(f)\,,\]
and, for every nonempty bounded set $D$ of $\R^N$ and every $g\in \mathrm{Hull}_{(\LC,\T)}(f)$ one has
\begin{equation*}
\lim_{t\to\infty} \dist(x(t,g_{-t},D), A_g)=0\,,
\end{equation*}
where $\dist(A,B)$ denotes the \emph{Hausdorff semi-distance} of two nonempty sets $A$, $B$ of~$\R^N$.  A pullback attractor for the skew-product flow is said to be \emph{bounded} if
\begin{equation*}
\bigcup_{g\in\mathrm{Hull}_{(\LC,\T)}(f)} \!\!\!\!\!\!\!A_g\quad\text{is bounded.}
\end{equation*} \vspace{0.2cm}
\item A compact set $\mathcal A$ of $\mathrm{Hull}_{(\LC,\T)}(f)\times \R^N$ is said to be a \emph{global attractor for the skew-product semiflow} if it is the maximal nonempty compact subset of $\mathrm{Hull}_{(\LC,\T)}(f)\times \R^N$ which is $\Pi$-invariant, i.e.
\begin{equation*}
 \Pi(t,\mathcal A)=\mathcal A \quad \text{ for each }\; t\ge 0\,,
\end{equation*}
and attracts all compact subsets $\mathcal D$ of $\mathrm{Hull}_{(\LC,\T)}(f)\times \R^N$, i.e.
\[\lim_{t\to\infty} \dist(\Pi(t,\mathcal D),\mathcal A)=0\,,\]
where now $\dist(\mathcal B,\mathcal C)$ denotes the \emph{Hausdorff semi-distance} of two nonempty sets $\mathcal B$, $\mathcal C$ of $\mathrm{Hull}_{(\LC,\T)}(f)\times \R^N$.
\end{itemize}
\end{defn}
\begin{rmk}
Analogous definitions hold when we change $\mathrm{Hull}_{(\LC,\T)}(f)$ by the alpha limit set $\mathds A(f)$  or by the omega limit set $\mathds O(f)$ in~\eqref{skew-product} and consider the corresponding skew-product semiflows.
\end{rmk}
\subsection{General results for processes and skew-product semiflows}\label{sub:general}
The next result  gives conditions under which, given a pullback attractor bounded in the past for the process induced by  $\dot x =f(t,x)$, with $f\in\LC$, one has the existence of a bounded pullback attractor for the process induced by $\dot x= g(t,x)$, where $g\in\LC$ is any function in the alpha limit set  $\mathds{A}(f)$.
\begin{thm}\label{thm:alphalimit}
Let $f$ be in $\LC$ and $\T$ be a topology such that the induced local skew-product flow~\eqref{skew-product} is continuous, and assume that for any $g\in\{f_s\mid s\leq 0\}\cup\mathds{A}(f)$, and $x_0\in\R^N$, the solution $x(\cdot,g,x_0)$ of $\ \dot x=g(t,x),\  x(0)=x_0,\ $ is defined on $[0,\infty)$.
If there is a $\tau\in\R$ for which the process $S_f(\cdot,\cdot)$ is strongly pullback bounded dissipative  on $(-\infty,\tau]$, and if $g$ is any function in $\mathds{A}(f)$, then the solutions of $\dot x=g(t,x)$ are uniformly ultimately bounded.  In particular, the induced process $S_g(\cdot,\cdot)$ has a bounded pullback attractor.
\end{thm}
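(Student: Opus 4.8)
The plan is to transfer the pullback dissipativity of $f$ to a uniform ultimate bound for $g$ by combining the time-translation structure of the process $\eqref{eq:process}$ with the assumed continuity of the local skew-product flow $\eqref{skew-product}$. Fix $g\in\mathds{A}(f)$ and choose $t_n\downarrow-\infty$ with $f_{t_n}\to g$ in $(\LC,\T)$; by continuity of the base flow one then has $f_{t_n+r}=(f_{t_n})_r\to g_r$ for every $r\in\R$. Fix $d>0$, set $D=B_d$, let $T(d):=T(\tau,B_d)$ be the time provided by the strong pullback dissipativity on $(-\infty,\tau]$, and let $c>0$ be a radius with $B(\tau)\subset B_c$.

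The central computation is to apply $\eqref{def:strpullbound}$ along the pullback sequence. Fix $r\in\R$, $t\ge T(d)$ and $|x_0|\le d$. Using $\eqref{eq:process}$ we rewrite $x(t,f_{t_n+r},x_0)=S_f(t+t_n+r,t_n+r)\,x_0$. Since $t_n\downarrow-\infty$, for all large $n$ one has $t+t_n+r\le\tau$ and $t_n+r\le 0$, so the relevant initial time is non-positive and $f_{t_n+r}\in\{f_s\mid s\le 0\}$ has a globally forward defined solution. Taking $t'=t+t_n+r\le\tau$ and $s'=t\ge T(d)$ in $\eqref{def:strpullbound}$ gives $x(t,f_{t_n+r},x_0)=S_f(t',t'-s')\,x_0\in B(\tau)$.

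Next I would pass to the limit $n\to\infty$. Because $\mathds{A}(f)$ is flow-invariant, $g_r\in\mathds{A}(f)$ and the solution $x(\cdot,g_r,x_0)$ is globally forward defined; hence $(t,g_r,x_0)$ lies in the open domain of the flow and, for $n$ large, so does $(t,f_{t_n+r},x_0)$. The continuity of $\eqref{skew-product}$ then yields $x(t,f_{t_n+r},x_0)\to x(t,g_r,x_0)$, and combined with the previous step this gives $x(t,g_r,x_0)\in\overline{B(\tau)}\subset B_c$, i.e. $|S_g(t+r,r)\,x_0|=|x(t,g_r,x_0)|\le c$. Since $c$ depends only on $B(\tau)$ and $T(d)$ only on $d$—both independent of $r$ and of $x_0$—this is exactly the uniform ultimate boundedness of the solutions of $\dot x=g(t,x)$. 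The final assertion follows because uniform ultimate boundedness is equivalent to $\eqref{eq:bounabs}$ (Remark~\ref{rmk:unifultboun}), which by Remark~\ref{rmk:bound-attrac} furnishes a bounded pullback attractor for $S_g(\cdot,\cdot)$.

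The main obstacle is the bookkeeping in the second paragraph: one must line up the pullback initial time $t_n+r\to-\infty$ so that the final time $t+t_n+r$ lands in $(-\infty,\tau]$ while the elapsed time equals exactly $t\ge T(d)$, which is precisely what makes the single hypothesis $\eqref{def:strpullbound}$ applicable \emph{uniformly} in $r$ and what forces the constant $c$ to be independent of $r$. A secondary point, needed to legitimately pass to the limit, is to invoke the flow-invariance of $\mathds{A}(f)$—so that $g_r$ has a globally defined forward solution—together with the openness of the flow domain, guaranteeing that the continuity statement applies at the fixed time $t$.
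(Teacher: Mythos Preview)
Your proof is correct and follows essentially the same route as the paper: translate the strong pullback dissipativity of $S_f$ along the sequence $t_n\downarrow-\infty$, use the time-translation identity $x(t,f_{t_n+r},x_0)=S_f(t+t_n+r,t_n+r)\,x_0$, and pass to the limit via continuity of the skew-product flow to obtain a uniform bound independent of $r$, then invoke Remarks~\ref{rmk:unifultboun} and~\ref{rmk:bound-attrac}. You are slightly more explicit than the paper about the flow-invariance of $\mathds{A}(f)$ (so that $g_r$ inherits the global forward existence hypothesis) and about the openness of the flow domain needed to justify the limit at the fixed time $t$, but these are refinements rather than a different argument.
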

\begin{proof} Let  $D$ be a bounded set.  By hypothesis, there are $c=c(\tau)>0$  and $T(D)=T(\tau,D)>0$ such that for each $x_0\in D$ one has
\begin{equation*}
|S_f(t,t-s)\,x_0|=|x(s,f_{t-s},x_0)|\le c \quad \text{for } t\le \tau \text{ and } s\ge T(D)\,.
\end{equation*}
If $g=\lim_{n\to\infty} f_{t_n}$ with $t_n\downarrow -\infty$, then we have $g_{t-s}=\lim_{n\to\infty} f_{t_n+t-s}$ and by the continuity of the semiflow
\[|S_g(t,t-s)\,x_0)|=|x(s,g_{t-s},x_0)|=\left|\lim_{n\to\infty}x(s,f_{t_n+t-s},x_0)\right|\,.\]
Finally, there exists $n_0\in\N$ such that, if $n\ge n_0$, then $t_{n}+t\le\min\{ 0,\tau\}$, and thus \[ |S_g(t,t-s)\,x_0|\le c\quad \text{for every } t\in\R \text{ and } s\ge T(D)\,.\]
Therefore, from Remark~\ref{rmk:unifultboun} the solutions of $\dot x=g(t,x)$ are uniformly ultimately bounded and, as stated in Remark~\ref{rmk:bound-attrac}, a bounded pullback attractor exists.
\end{proof}
Analogously, we give conditions to have a bounded pullback attractor for the process induced by $\dot x= g(t,x)$, when $g\in\LC$ is any function in the omega limit set~$\mathds{O}(f)$.
\begin{thm}\label{th:boundedpullback}
Let $f$ be in $\LC$ and $\T$ be a topology such that the induced local skew-product flow \eqref{skew-product} is continuous, and assume that for any $g\in\{f_s\mid s\geq 0\}\cup\mathds{O}(f)$, and $x_0\in\R^N$, the solution $x(\cdot,g,x_0)$ of $ \dot x=g(t,x),\  x(0)=x_0,\ $ is defined on $[0,\infty)$.  If there is a $\tau\in\R$ for which the solutions of $\dot x=f(t,x)$ are uniformly ultimately bounded on $[\tau,\infty)$, and $g$ is any function in $\mathds{O}(f)$, then the solutions of $\dot x=g(t,x)$ are uniformly ultimately bounded and the induced process $S_g(\cdot,\cdot)$ has a bounded pullback attractor.
\end{thm}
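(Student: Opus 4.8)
The plan is to mirror the argument of Theorem~\ref{thm:alphalimit}, exchanging the roles of $-\infty$ and $+\infty$ and replacing the strong pullback dissipativity on $(-\infty,\tau]$ by the uniform ultimate boundedness on $[\tau,\infty)$. First I would fix $d>0$ and take $x_0$ with $|x_0|\le d$. By hypothesis there are a constant $c(\tau)>0$ and a time $T(\tau,d)>0$ such that $|S_f(t+r,r)\,x_0|=|x(t,f_r,x_0)|\le c(\tau)$ whenever $r\ge\tau$, $t\ge T(\tau,d)$ and $|x_0|\le d$. I would then set $c:=c(\tau)$ and $T(d):=T(\tau,d)$ and show that this single constant controls the solutions of $\dot x=g(t,x)$ for \emph{all} $r\in\R$.

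Since $g\in\mathds{O}(f)$, write $g=\lim_{n\to\infty}f_{t_n}$ with $t_n\uparrow\infty$ in $(\LC,\T)$. By continuity of the base flow the translations converge, $g_r=\lim_{n\to\infty}f_{t_n+r}$, and by continuity of the skew-product flow~\eqref{skew-product} one has, for each $r\in\R$, $t\ge 0$ and $|x_0|\le d$,
\[
|S_g(t+r,r)\,x_0|=|x(t,g_r,x_0)|=\Big|\lim_{n\to\infty}x(t,f_{t_n+r},x_0)\Big|=\Big|\lim_{n\to\infty}S_f(t_n+r+t,\,t_n+r)\,x_0\Big|.
\]
The key is to exploit $t_n\uparrow\infty$ in order to land in the admissible region of the uniform ultimate bound: for fixed $r$, choose $n_0$ so large that $t_n+r\ge\tau$ for all $n\ge n_0$; then for every $t\ge T(d)$ and every $n\ge n_0$ the triple $(t,\,t_n+r,\,x_0)$ meets the hypotheses of uniform ultimate boundedness on $[\tau,\infty)$, so $|S_f(t_n+r+t,t_n+r)\,x_0|\le c(\tau)$. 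Passing to the limit $n\to\infty$ gives $|S_g(t+r,r)\,x_0|\le c(\tau)$.

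Since both $c(\tau)$ and $T(d)=T(\tau,d)$ are independent of $r$, the bound holds for every $r\in\R$, which is exactly the uniform ultimate boundedness of the solutions of $\dot x=g(t,x)$. Finally, by Remark~\ref{rmk:unifultboun} this is equivalent to condition~\eqref{eq:bounabs} for the induced process $S_g(\cdot,\cdot)$, whence Remark~\ref{rmk:bound-attrac} yields a bounded pullback attractor.

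The delicate point, exactly as in Theorem~\ref{thm:alphalimit}, is to guarantee that neither the bound $c(\tau)$ nor the absorbing time depends on the base point $r$: this is secured because in the definition of uniform ultimate boundedness on $[\tau,\infty)$ the time $T(\tau,d)$ depends only on $\tau$ and $d$, while the forward shift $t_n\uparrow\infty$ pushes $t_n+r$ beyond $\tau$ regardless of $r$. The only genuine analytic input is the interchange $x(t,g_r,x_0)=\lim_n x(t,f_{t_n+r},x_0)$, which must be justified by the assumed continuity of the induced skew-product flow; this is precisely where the continuity hypothesis on $f$ enters.
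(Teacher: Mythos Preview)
Your argument is correct and follows essentially the same route as the paper's own proof: rewrite $S_g(t+r,r)\,x_0=x(t,g_r,x_0)$, pass to the limit along $f_{t_n+r}$ using continuity of the skew-product flow, and for $n$ large enough fall into the region where the uniform ultimate bound on $[\tau,\infty)$ applies, then invoke Remarks~\ref{rmk:unifultboun} and~\ref{rmk:bound-attrac}. The only cosmetic difference is that the paper chooses $n_0$ so that $t_n+r\ge\max\{0,\tau\}$ rather than just $t_n+r\ge\tau$, the extra condition $t_n+r\ge 0$ ensuring that $f_{t_n+r}\in\{f_s\mid s\ge 0\}$ so that the forward-existence hypothesis guarantees $x(\cdot,f_{t_n+r},x_0)$ is defined on $[0,\infty)$; this is harmless since $t_n\uparrow\infty$ accommodates both constraints simultaneously.
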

\begin{proof}
From~\eqref{desi:ultimate}  it holds
\[ |S_f(t+s,s)\,x_0|=|x(t+s,f,s,x_0)|=|x(t,f_{s},x_0)|\le c(\tau)\]
 if $s\ge \tau$, $t\ge T(\tau,d)$  and  $|x_0|\le d$.
Since $g\in\mathds{O}(f)$ there is a sequence $t_n\uparrow\infty$ with $\lim_{n\to\infty}f_{t_n}=g$. Thus, $g_{r}=\lim_{n\to\infty}f_{t_n+r}$ and by the continuity of the solutions
\[|S_g(t+r,r)\,x_0|=|x(t,g_{r},x_0)|=\left| \lim_{n\to\infty} x(t,f_{t_n+r},x_0)\right|\,.
\]
Since there is $n_0\in\N$ such that, if $n\ge n_0$, then $t_n+r\ge\max\{0, \tau\}$,  we conclude~that
\[ |S_g(t+r,r)\,x_0|\le c(\tau) \quad \text{ whenever } r\in\R\,,t\ge T(\tau,d)\, \text{ and }\, |x_0|\le d\,,\]
that is, the solutions of $\dot x=g(t,x)$ are uniformly ultimately bounded, as claimed.
As in Theorem~\ref{thm:alphalimit}, from Remarks~\ref{rmk:unifultboun} and~\ref{rmk:bound-attrac} we obtain the thesis.
\end{proof}
Finally, we give conditions to have a bounded pullback attractor for the process induced by $\dot x= g(t,x)$, when $g\in\LC$ is any function in the hull of $f$.
\begin{thm}\label{thm:pullbackHull}
Let $f$ be in $\LC$ and $\T$ be a topology such that the induced local skew-product flow \eqref{skew-product} is defined on $\R^+\!\!\times\mathrm{Hull}_{(\LC,\T)}(f)\times\R^N$ and it is continuous.
 If there is a  pullback bounded absorbing set $B$ satisfying~\eqref{eq:bounabs}, then, for any $g\in\mathrm{Hull}_{(\LC,\T)}(f)$, one has that the solutions of $\dot x=g(t,x)$ are uniformly ultimately bounded  and the induced process $S_g(\cdot,\cdot)$ has a bounded pullback attractor.
\end{thm}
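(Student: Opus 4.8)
The plan is to reduce the statement to the uniformly ultimately bounded character of the solutions and then transfer this property from $f$ to an arbitrary $g$ in the hull, following the same scheme as in the proofs of Theorems~\ref{thm:alphalimit} and~\ref{th:boundedpullback}. The crucial difference with those two cases is that here the hypothesis on $f$ holds \emph{uniformly in the initial time} $r\in\R$; this is what will make the argument work for any approximating sequence of translates, rather than only for sequences tending to $\pm\infty$.

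First I would invoke Remark~\ref{rmk:unifultboun} to rephrase the hypothesis: the existence of a set $B$ satisfying~\eqref{eq:bounabs} is equivalent to the solutions of $\dot x=f(t,x)$ being uniformly ultimately bounded. Thus there is a constant $c>0$ such that for every $d>0$ there is $T(d)>0$ with
\[|S_f(t+r,r)\,x_0|=|x(t,f_r,x_0)|\le c\qquad\text{for all } r\in\R,\ t\ge T(d),\ |x_0|\le d\,.\]

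Next I would fix $g\in\mathrm{Hull}_{(\LC,\T)}(f)$ and write $g=\lim_{n\to\infty}f_{t_n}$ for some sequence $(t_n)_\nin$ in $\R$. The continuity of the base flow gives $g_r=\lim_{n\to\infty}f_{t_n+r}$ for each fixed $r$, and the continuity of the skew-product flow~\eqref{skew-product} then yields
\[|S_g(t+r,r)\,x_0|=|x(t,g_r,x_0)|=\lim_{n\to\infty}|x(t,f_{t_n+r},x_0)|\,.\]
Since $x(t,f_{t_n+r},x_0)=S_f\big(t+(t_n+r),t_n+r\big)\,x_0$ and $t_n+r\in\R$ for every $\nin$, the uniform ultimate bound above applies verbatim with initial time $t_n+r$: for $t\ge T(d)$ and $|x_0|\le d$ one has $|x(t,f_{t_n+r},x_0)|\le c$ for all $n$. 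Passing to the limit would then give $|S_g(t+r,r)\,x_0|\le c$ for every $r\in\R$, $t\ge T(d)$ and $|x_0|\le d$, so that the solutions of $\dot x=g(t,x)$ are uniformly ultimately bounded. I would close the argument by reading Remark~\ref{rmk:unifultboun} backwards and invoking Remark~\ref{rmk:bound-attrac} to obtain a bounded pullback attractor for the induced process $S_g(\cdot,\cdot)$.

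The only delicate point, which I would emphasise, is that the constant $c$ and the times $T(d)$ obtained for $f$ do not depend on the initial time; it is precisely this uniformity that survives the passage to the limit for an \emph{arbitrary} sequence $(t_n)_\nin$, and hence covers the whole hull at once. In the alpha- and omega-limit settings one only had ultimate boundedness on half-lines, which forced the approximating translates to run to $-\infty$ or $+\infty$; here no such restriction is needed, and the proof is correspondingly shorter.
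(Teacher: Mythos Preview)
Your proof is correct, but it follows a different route from the paper's. The paper argues by decomposing the hull as $\mathrm{Hull}_{(\LC,\T)}(f)=\mathds{A}(f)\cup\mathds{O}(f)\cup\{f_\tau\mid\tau\in\R\}$ and then invokes Theorems~\ref{thm:alphalimit} and~\ref{th:boundedpullback} for the alpha- and omega-limit pieces respectively (noting that~\eqref{eq:bounabs} implies both sets of hypotheses via Remarks~\ref{rmk:bound-attrac} and~\ref{rmk:unifultboun}), while the orbit $\{f_\tau\}$ is handled trivially. You instead give a single direct argument: because the ultimate bound for $f$ is uniform over \emph{all} initial times $r\in\R$, you can pass to the limit along an arbitrary approximating sequence $(t_n)_\nin$, without caring whether it is bounded or tends to $\pm\infty$, and thereby cover every $g$ in the hull at once. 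Your approach is more self-contained and avoids the case split; the paper's approach highlights how the present theorem subsumes the two earlier ones as special cases and reuses them rather than repeating their mechanism. Both rely on the same ingredients (Remark~\ref{rmk:unifultboun}, continuity of the skew-product flow, Remark~\ref{rmk:bound-attrac}), and the global-existence issue that had to be checked separately in the earlier theorems is here absorbed into the standing hypothesis that the semiflow is defined on all of $\R^+\times\mathrm{Hull}_{(\LC,\T)}(f)\times\R^N$.
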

\begin{proof}
First, notice that $\mathrm{Hull}_{(\LC,\T)}(f)=\mathds{A}(f)\cup\mathds{O}(f)\cup \{f_\tau\mid \tau\in\R\}$. Moreover,  condition~\eqref{eq:bounabs} implies that the assumptions of Theorems~\ref{thm:alphalimit} and~\ref{th:boundedpullback} are satisfied, as shown in Remarks~\ref{rmk:unifultboun} and~\ref{rmk:bound-attrac}. Therefore, if $g\in\mathds{A}(f)$ (resp. $g\in\mathds{O}(f)$) the result follows from Theorem~\ref{thm:alphalimit} (resp. Theorem~\ref{th:boundedpullback}).  If $g$ is $f$, or one of its time-translations, the uniformly ultimately bounded character of the solutions cames  again from Remark~\ref{rmk:unifultboun}, which together with Remark~\ref{rmk:bound-attrac} allows to end the proof.
\end{proof}
The next result provides the existence of a pullback attractor as well as a global attractor (when $\mathrm{Hull}_{(\LC,\T)}(f)$ is compact) of the skew-product semiflow~\eqref{skew-product} and the relation between them.
We denote by $x(t,f,D)$ the subset of $\R^N$ given by $\{x(t,f,x_0)\mid x_0\in D\}$.
\begin{thm}\label{thm:skpHull} Let $f$ be in $\LC$ and $\T$ be a topology such that the induced skew-product semiflow \eqref{skew-product} is defined on $\R^+\!\!\times\mathrm{Hull}_{(\LC,\T)}(f)\times\R^N$ and it is continuous.
 Assume that there is a bounded set $B \subset\R^N$ such that for each nonempty bounded set $D$ there is a time $T(D)$ such that
\begin{equation}\label{conditionHull}
x(t,f_s,D) \subset B  \quad \text{ whenever }\; t\ge T(D)
\end{equation}
for every $s\in\R$.  Then
\begin{itemize}
\item[(i)]  there is a unique bounded pullback attractor $\widehat A=\{ A_g\mid g\in \mathrm{Hull}_{(\LC,\T)}(f)\}$ of the skew-product semiflow~\eqref{skew-product} given by
    \[A_g=\bigcap_{\tau\ge 0}\; \overline{\bigcup_{t\ge \tau}x(t,g_{-t}, B)}\quad \text{for each }\; g\in\mathrm{Hull}_{(\LC,\T)}(f)\,, \]
\item[(ii)] if $\mathrm{Hull}_{(\LC,\T)}(f)$ is compact, there is a global attractor of the skew-product semiflow~\eqref{skew-product} given by
    \[ \mathcal A=\bigcap_{\tau\ge 0}\; \overline{\bigcup_{t\ge \tau} \Pi(t, \mathrm{Hull}_{(\LC,\T)}(f)\times B)}\,=\bigcup_{g\in \mathrm{Hull}_{(\LC,\T)}(f)}\left\{\{ g\}\times A_g\right\}\,.\]
\end{itemize}
\end{thm}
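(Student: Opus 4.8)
The plan is to recognize $A_g$ as the pullback $\omega$-limit set of the absorbing set $B$ at the fibre $g$, and to reduce both statements to the classical existence theory for pullback attractors of cocycles and for global attractors of semiflows (see \cite{book:CLR,book:KR}); the only genuinely new work is to promote the absorption hypothesis \eqref{conditionHull}, stated only for the translates $\{f_s\mid s\in\R\}$, to a uniform absorption property over the whole hull, and then to identify the fibres of the global attractor with the pullback attractor. Throughout write $H=\mathrm{Hull}_{(\LC,\T)}(f)$.

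First I would fix the absorbing set. Since $\R^N$ is finite dimensional, $\overline B$ is compact, and I claim it absorbs uniformly over $H$, both forward and in the pullback sense: for every bounded $D\subset\R^N$ and every $t\ge T(D)$ one has $x(t,g,D)\subset\overline B$ and $x(t,g_{-t},D)\subset\overline B$ for all $g\in H$. For $g=f_s$ these are exactly \eqref{conditionHull} applied at the parameters $s$ and $s-t$; for a general $g=\lim_n f_{s_n}$ in $(\LC,\T)$ they follow because translation is continuous on the base, so $g_{-t}=\lim_n f_{s_n-t}$, and the continuity of the skew-product semiflow \eqref{skew-product} gives $x(t,g_{-t},x_0)=\lim_n x(t,f_{s_n-t},x_0)\in\overline B$ whenever $t\ge T(D)$ and $x_0\in D$ (and likewise for the forward map). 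Because $x\mapsto x(t,g_{-t},x)$ is continuous, the $\omega$-limit built from $B$ and the one built from $\overline B$ coincide, so I may freely work with the compact set $\overline B$.

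Next I would carry out the pullback-$\omega$-limit construction for part (i). For $\tau\ge T(B)$ the sets $\overline{\bigcup_{t\ge\tau}x(t,g_{-t},B)}$ are nonempty closed subsets of the compact set $\overline B$, nested and decreasing in $\tau$, so $A_g=\bigcap_{\tau\ge0}\overline{\bigcup_{t\ge\tau}x(t,g_{-t},B)}$ is nonempty and compact, and $A_g\subset\overline B$ makes $\bigcup_g A_g$ bounded at once. Invariance $x(t,g,A_g)=A_{g_t}$ is the standard property of pullback $\omega$-limits: using the cocycle identity together with $(g_t)_{-s}=g_{t-s}$ and the continuity of the flow, one verifies both inclusions through the characterization that $z\in A_g$ iff $z=\lim_k x(t_k,g_{-t_k},b_k)$ for some $t_k\to\infty$ and $b_k\in\overline B$. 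For the pullback attraction of an arbitrary bounded $D$, I would split $t=T(D)+s$ and apply the cocycle property to get $x(t,g_{-t},D)\subset x(s,g_{-s},\overline B)$ with $s\to\infty$; attraction then follows from $\dist(x(s,g_{-s},\overline B),A_g)\to0$, which is the usual contradiction argument exploiting relative compactness of the orbits inside $\overline B$. Uniqueness among bounded pullback attractors is the classical comparison: if $\{A'_g\}$ is another one with $\bigcup_g A'_g\subset D$ bounded, then invariance $A'_g=x(t,g_{-t},A'_{g_{-t}})\subset x(t,g_{-t},D)$ combined with the attraction of $\{A_g\}$ forces $A'_g\subset A_g$, and the symmetric inequality (using that $\bigcup_g A_g$ is bounded) gives equality.

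For part (ii), when $H$ is compact the set $H\times\overline B$ is compact and, by the forward absorption above, it absorbs every bounded subset of $H\times\R^N$ under the continuous semiflow $\Pi$; the classical theory of global attractors for semiflows with a compact absorbing set (\cite{book:CLR,book:KR}) then yields the global attractor as $\mathcal A=\bigcap_{\tau\ge0}\overline{\bigcup_{t\ge\tau}\Pi(t,H\times B)}$, contained in $H\times\overline B$. It remains to identify the fibres $\mathcal A_g=\{x\mid(g,x)\in\mathcal A\}$ with $A_g$. The inclusion $A_g\subset\mathcal A_g$ is immediate, since $z=\lim_k x(t_k,g_{-t_k},b_k)$ with $b_k\in B$, $t_k\to\infty$ gives $(g,z)=\lim_k\Pi(t_k,g_{-t_k},b_k)\in\overline{\bigcup_{t\ge\tau}\Pi(t,H\times B)}$ for every $\tau$, hence $(g,z)\in\mathcal A$. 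Conversely, since the base is a flow and $\Pi(n,\mathcal A)=\mathcal A$, each $(g,z)\in\mathcal A$ has for every $n\in\N$ a preimage $(g_{-n},w_n)\in\mathcal A\subset H\times\overline B$ with $x(n,g_{-n},w_n)=z$, and as $w_n\in\overline B$, $n\to\infty$, this exhibits $z\in A_g$; thus $\mathcal A=\bigcup_{g}\{g\}\times A_g$. The main obstacle throughout is precisely this bookkeeping: transferring \eqref{conditionHull} from the dense orbit to the whole (possibly non-compact) hull via continuity, and using the invertibility of the base flow to produce the backward orbits needed for $\mathcal A_g\subset A_g$.
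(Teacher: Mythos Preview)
Your proposal is correct and follows essentially the same approach as the paper: both proofs first extend the absorption condition \eqref{conditionHull} from the dense orbit $\{f_s\mid s\in\R\}$ to the whole hull via continuity of the skew-product semiflow, and then invoke the standard existence theory for pullback and global attractors. The paper's proof is terser, citing Theorem~3.20 of \cite{book:KR} for (i), Theorem~2.2 of Cheban--Kloeden--Schmalfu{\ss}~\cite{paper:CKS} for the global attractor in (ii), and Theorem~16.2 of \cite{book:CLR} for the identification of the fibres $\mathcal A_g=A_g$, whereas you unpack these results by hand; your care with the closure $\overline B$ when passing to limits is in fact slightly more precise than the paper's statement.
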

\begin{proof}
First, from the continuity of the skew-product flow, we deduce that
\begin{equation*}
x(t,g,D) \subset B \quad \text{ for every }\; t\ge T(D) \;\text { and every }\; g\in \mathrm{Hull}_{(\LC,\T)}(f).
\end{equation*}
Therefore, among other references, (i) follows from Theorem~3.20 of~\cite{book:KR}. The existence of a global attractor $\mathcal A$ under the compactness of the base $\mathrm{Hull}_{(\LC,\T)}(f)$ follows from~Theorem 2.2 of Cheban~\emph{et al.}~\cite{paper:CKS} and, as shown in Theorem 16.2 of~\cite{book:CLR}, $A_g$ is the section of $\mathcal A$ over $g$, that is ${\mathcal A}=\bigcup_{g\in \mathrm{Hull}_{(\LC,\T)}(f)}\left\{\{ g\}\times A_g\right\}$, which finishes the proof.
\end{proof}
\begin{rmk} Notice that~\eqref{conditionHull}  is equivalent to ~\eqref{eq:bounabs}, that is, the process induced by $f$ has a  pullback bounded absorbing set $B$.
\end{rmk}
\begin{rmk} Under the assumptions of Theorem 5.13(i-ii), in general the pullback attractors $\{A_{g_t} \mid t \in \R\}$ with $g \in \mathrm{Hull}_{(\LC,\T)}(f)$ have no forward attraction properties for the corresponding processes. However the global attractor $\mathcal A$ always exhibits collective properties of forward attractivity (see Caraballo \emph{et al.}~\cite{paper:CLO}).
\end{rmk}
Finally, from  Theorems~\ref{thm:alphalimit},~\ref{th:boundedpullback} and~\ref{thm:skpHull}
we obtain the corresponding results for the induced skew-product flow on
$\mathds{A}(f)\times\R^N$ and $\mathds{O}(f) \times\R^N$.
\begin{cor}\label{skpAlpha}
Let $f\in\LC$ and $\T$ be a topology such that the induced local skew-product flow \eqref{skew-product} is continuous, and assume that for any $g\in\{f_s\mid s\leq 0\}\cup\mathds{A}(f)$, and any $x_0\in\R^N$,  the solution $x(\cdot,g,x_0)$ of $\ \dot x=g(t,x),\  x(0)=x_0\ $ is defined on $[0,\infty)$. If there is a $\tau\in\R$ for which $S_f(\cdot,\cdot)$  is strongly pullback bounded dissipative  on $(-\infty,\tau]$, then {\rm(i)} and {\rm (ii)} of {\rm Theorems~\ref{thm:skpHull}} hold for the skew-product flow on $\mathds{A}(f)\times \R^N$.
\end{cor}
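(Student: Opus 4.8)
The plan is to recognise that, once the base is restricted to $\mathds{A}(f)$, the hypotheses of {\rm Theorem~\ref{thm:skpHull}} are in force, so that its conclusions can be transcribed verbatim. First I would observe that $\mathds{A}(f)$ is invariant under the base flow $g\mapsto g_t$; hence, together with the assumed global forward existence of the solutions for every $g\in\{f_s\mid s\le 0\}\cup\mathds{A}(f)$ and the continuity of~\eqref{skew-product}, the skew-product semiflow restricts to a continuous semiflow defined on $\R^+\!\!\times\mathds{A}(f)\times\R^N$. It then remains only to exhibit a single bounded set $B\subset\R^N$ which absorbs bounded sets uniformly over all $g\in\mathds{A}(f)$, that is, the analogue of condition~\eqref{conditionHull} with $\mathrm{Hull}_{(\LC,\T)}(f)$ replaced by $\mathds{A}(f)$.

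To produce such a $B$ I would appeal to {\rm Theorem~\ref{thm:alphalimit}}. Its proof shows that, under the strong pullback bounded dissipativity of $S_f$ on $(-\infty,\tau]$, every $g\in\mathds{A}(f)$ has uniformly ultimately bounded solutions, with the bound $c=c(\tau)$ and the absorbing time $T(D)=T(\tau,D)$ inherited directly from the dissipativity constants of the process $S_f$, and therefore independent of the particular $g\in\mathds{A}(f)$. Setting $B=B_{c(\tau)}$ and recalling, via {\rm Remarks~\ref{rmk:unifultboun}} and~{\rm\ref{rmk:bound-attrac}}, that the uniformly ultimately bounded character is equivalent to $x(t,g,D)\subset B$ for all $t\ge T(D)$, one obtains exactly the required absorbing condition on $\mathds{A}(f)$, holding uniformly in $g$.

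With this absorbing set secured, I would apply {\rm Theorem~\ref{thm:skpHull}} with $\mathds{A}(f)$ in place of $\mathrm{Hull}_{(\LC,\T)}(f)$: part~(i) yields the unique bounded pullback attractor $\widehat A=\{A_g\mid g\in\mathds{A}(f)\}$, with $A_g=\bigcap_{\tau\ge 0}\overline{\bigcup_{t\ge\tau}x(t,g_{-t},B)}$ (well defined since $g_{-t}\in\mathds{A}(f)$ by invariance), and part~(ii), under the additional compactness of $\mathds{A}(f)$, produces the global attractor $\mathcal A=\bigcup_{g\in\mathds{A}(f)}\{g\}\times A_g$. The only delicate point, and the one I would emphasise, is the \emph{uniformity} of the constants $c$ and $T(D)$ over $g\in\mathds{A}(f)$: this is not contained in the bare statement of {\rm Theorem~\ref{thm:alphalimit}} (which asserts uniform ultimate boundedness for each individual $g$) but is read off from its proof, where those constants depend only on $\tau$ and $D$ through the dissipativity of $S_f$. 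Once this is in hand, the remainder is a direct transcription of {\rm Theorem~\ref{thm:skpHull}}.
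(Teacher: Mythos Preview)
Your proposal is correct and follows essentially the same route the paper intends: the sentence preceding the corollary states that it is obtained ``from Theorems~\ref{thm:alphalimit} and~\ref{thm:skpHull}'', and your argument spells out precisely how these two results combine. In particular, you correctly identify the one point requiring care---that the constants $c=c(\tau)$ and $T(D)=T(\tau,D)$ obtained in the proof of Theorem~\ref{thm:alphalimit} are inherited from the dissipativity of $S_f$ alone and are therefore uniform over all $g\in\mathds{A}(f)$---and you rightly note that this uniformity must be read from the proof rather than from the statement. With that absorbing set in hand, the invocation of Theorem~\ref{thm:skpHull} on the invariant base $\mathds{A}(f)$ is exactly the intended step.
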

\begin{cor}\label{skpOmega}
Let $f\in\LC$ and $\T$ be a topology such that the induced local skew-product flow \eqref{skew-product} is continuous, and assume that for any $g\in\{f_s\mid s\geq 0\}\cup\mathds{O}(f)$, and any $x_0\in\R^N$ the solution $x(\cdot,g,x_0)$ of  $ \dot x=g(t,x),\  x(0)=x_0\ $ is defined on $[0,\infty)$.  If there is a $\tau\in\R$ for which the solutions of $\dot x=f(t,x)$ are uniformly ultimately bounded on $[\tau,\infty)$, then {\rm(i)} and {\rm(ii)} of {\rm Theorems~\ref{thm:skpHull}} hold for the skew-product flow on $\mathds{O}(f)\times \R^N$.
\end{cor}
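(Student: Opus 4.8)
The plan is to reduce the statement to Theorem~\ref{thm:skpHull} by verifying that its uniform absorbing hypothesis holds with the base $\mathrm{Hull}_{(\LC,\T)}(f)$ replaced by $\mathds{O}(f)$, and then to run the same abstract attractor arguments. The decisive observation is that the uniform ultimate boundedness supplied by Theorem~\ref{th:boundedpullback} comes with constants that are uniform over the entire omega limit set.

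First I would invoke Theorem~\ref{th:boundedpullback}: since the solutions of $\dot x=f(t,x)$ are uniformly ultimately bounded on $[\tau,\infty)$, for every $g\in\mathds{O}(f)$ the solutions of $\dot x=g(t,x)$ are uniformly ultimately bounded. Reading off its proof, the bounding constant $c(\tau)$ and the time $T(\tau,d)$ are exactly those attached to $f$ on $[\tau,\infty)$, hence independent of the chosen $g\in\mathds{O}(f)$. Setting $B:=B_{c(\tau)}$ and taking $r=0$ in the definition of uniform ultimate boundedness, this yields: for every bounded $D\subset\R^N$, say $D\subset B_d$,
\[
x(t,g,D)\subset B\qquad\text{whenever } t\ge T(\tau,d)=:T(D),\ \text{for every } g\in\mathds{O}(f).
\]
This is precisely the analogue of the absorbing condition~\eqref{conditionHull}, but already established at every point of the base $\mathds{O}(f)$, so the continuity/density step used inside the proof of Theorem~\ref{thm:skpHull} is not even needed here.

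Next I would observe that $\mathds{O}(f)$ is a closed, flow-invariant subset of $\mathrm{Hull}_{(\LC,\T)}(f)$ for the two-sided base flow $g\mapsto g_t$, and that by hypothesis the skew-product semiflow~\eqref{skew-product} is defined on $\R^+\!\!\times\mathds{O}(f)\times\R^N$ and continuous there (being a restriction of the continuous flow). With the uniform absorbing set $B$ in hand, the construction and proof of Theorem~\ref{thm:skpHull} transcribe verbatim to $\mathds{O}(f)$: part~(i) follows from Theorem~3.20 of~\cite{book:KR}, giving the bounded pullback attractor $\{A_g\mid g\in\mathds{O}(f)\}$ with $A_g$ defined by the formula of Theorem~\ref{thm:skpHull}(i); and when $\mathds{O}(f)$ is compact, part~(ii) follows from Theorem~2.2 of~\cite{paper:CKS} together with Theorem~16.2 of~\cite{book:CLR}, identifying the global attractor with $\bigcup_{g\in\mathds{O}(f)}\{g\}\times A_g$.

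The only delicate point, which I expect to be the crux, is the uniformity of $c(\tau)$ and $T(\tau,d)$ across the base; but this is exactly what the approximation $g=\lim_n f_{t_n}$ with $t_n+r\ge\max\{0,\tau\}$ inside Theorem~\ref{th:boundedpullback} delivers, so no new estimate is required. The remainder is a routine restriction of the attractor theory already carried out in Theorem~\ref{thm:skpHull} to the invariant set $\mathds{O}(f)$.
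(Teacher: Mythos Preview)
Your proposal is correct and follows essentially the same route as the paper, which does not write out a proof but simply states that the corollary follows from Theorems~\ref{thm:alphalimit}, \ref{th:boundedpullback} and~\ref{thm:skpHull}. You have accurately identified the one nontrivial point—the uniformity of $c(\tau)$ and $T(\tau,d)$ over all $g\in\mathds{O}(f)$, which is indeed delivered by the proof of Theorem~\ref{th:boundedpullback}—and then correctly observed that the attractor construction of Theorem~\ref{thm:skpHull} transfers verbatim to the invariant base $\mathds{O}(f)$.
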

\section{Comparison methods for Carath\'{e}odory ODEs}\label{examples}
This section provides sufficient conditions under which the abstract results of subsection~\ref{sub:general} can be applied. In fact, several types of attractors, both for the induced process and the induced skew-product flow, are obtained. In the first subsection the size of the solutions of a Carath\'eodory differential system $\dot x=f(t,x)$ is compared with the size of the solutions of a scalar linear equation, while in the second subsection a comparison with a system of linear Carath\'eodory equations is carried out.
\subsection{Comparison with a scalar Carath\'eodory  linear equation}\label{scalar}
 Consider a Carath\'eodory differential system $\dot x=f(t,x)$ and the condition below for $f\in\LC$: \par\smallskip
\begin{itemize}
\item[\textbf H$_1$:] there exist $\alpha(\cdot)$, $\beta(\cdot)\in L^1_{loc}$, with  $\beta(\cdot)$ non-negative, such that
\begin{equation*}
2\,\langle f(t,x),x\rangle\le \alpha(t)\,|x|^2+\beta(t)\qquad \text{for a.e. } (t,x)\in\R^{N+1}\,,
\end{equation*}
where $\langle \cdot,\cdot\rangle$ represents the scalar product in $\R^N$ .
\end{itemize}
\par\smallskip
\noindent This assumption implies the following inequality for the solutions  of $\dot x=f(t,x)$.
\begin{prop}\label{prop:aet}
Assume that {\rm\textbf H$_1$} holds. If $x(t)$ is a solution of $\dot x=f(t,x)$ defined on an interval $I$, then it satisfies
\begin{equation}\label{desi:ae-dissp}
2\,\langle f(t,x(t)),x(t)\rangle\le \alpha(t)\,|x(t)|^2+\beta(t)\, \quad \text{\rm{ for a.e. }} t\in I.
\end{equation}
\end{prop}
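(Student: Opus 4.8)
The subtle point is that one \emph{cannot} simply substitute $x=x(t)$ into the pointwise inequality of \textbf{H}$_1$: that inequality is only assumed to hold for almost every $(t,x)\in\R^{N+1}$, i.e. outside a Lebesgue-null set $E\subset\R^{N+1}$, whereas the graph $\{(t,x(t))\mid t\in I\}$ of the solution is itself a null set in $\R^{N+1}$ and could in principle be entirely contained in $E$. The plan is to upgrade \textbf{H}$_1$ from an ``almost everywhere in $(t,x)$'' statement to a ``for almost every $t$, for \emph{every} $x$'' statement, using the continuity of $f$ in the spatial variable that is built into the Carath\'eodory class.

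Concretely, I would set
\[
g(t,x)=2\,\langle f(t,x),x\rangle-\alpha(t)\,|x|^2-\beta(t),
\]
so that \textbf{H}$_1$ reads $g\le 0$ a.e. on $\R^{N+1}$; put $E=\{(t,x)\mid g(t,x)>0\}$, which then has $(N+1)$-dimensional Lebesgue measure zero. First I would invoke Fubini's theorem to conclude that for almost every $t\in\R$ the slice $E_t=\{x\in\R^N\mid(t,x)\in E\}$ is Lebesgue-null in $\R^N$, i.e. $g(t,x)\le0$ for almost every $x\in\R^N$. Next, since $f\in\LC\subset\SC$, condition (S) of Definition~\ref{def:SC} gives that for almost every $t\in\R$ the map $f(t,\cdot)$ is continuous, hence $x\mapsto g(t,x)$ is continuous for almost every $t$ (the maps $x\mapsto|x|^2$ and $x\mapsto\langle f(t,x),x\rangle$ being continuous, and $\alpha(t),\beta(t)$ finite a.e. as elements of $L^1_{loc}$). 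Intersecting these two full-measure sets of good times, I obtain a set $G\subset\R$ of full measure such that for each $t\in G$ the function $g(t,\cdot)$ is continuous and $\le 0$ for almost every $x$; a continuous function that is nonpositive almost everywhere is nonpositive everywhere, since otherwise the set $\{x\mid g(t,x)>0\}$ would be open, nonempty, and hence of positive measure. Therefore $g(t,x)\le 0$ for \emph{every} $x\in\R^N$ whenever $t\in G$.

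To finish, for almost every $t\in I$ (namely $t\in I\cap G$) I would evaluate this last inequality at the particular point $x=x(t)$, which is now legitimate because it holds for all $x$, yielding
\[
2\,\langle f(t,x(t)),x(t)\rangle\le\alpha(t)\,|x(t)|^2+\beta(t)\qquad\text{for a.e. }t\in I,
\]
as claimed. The only genuine difficulty is the one highlighted at the outset --- reconciling the null graph of the solution with the null exceptional set of \textbf{H}$_1$ --- and it is resolved entirely by the spatial continuity encoded in $\SC$; note that the hypothesis $\beta\ge 0$ is not needed for this proposition and plays its role only in the subsequent comparison estimates.
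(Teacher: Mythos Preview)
Your proof is correct, but it takes a cleaner and genuinely different route than the paper's. The paper does not upgrade \textbf{H}$_1$ globally; instead it works with the specific solution $x(\cdot)$, introduces the perturbation set $E=\{(t,\varepsilon)\in I\times B_1\mid (t,x(t)+\varepsilon)\in V\}$, and applies Fubini in the variables $(t,\varepsilon)$ to find a sequence $\varepsilon_n\to 0$ such that the inequality holds at $(t,x(t)+\varepsilon_n)$ for a.e.\ $t\in I$; it then passes to the limit $\varepsilon_n\to 0$ using (implicitly) the continuity of $f(t,\cdot)$. Your argument bypasses the solution entirely and proves the stronger statement that for a.e.\ $t$ the inequality holds for \emph{every} $x\in\R^N$, by combining Fubini on the original $(t,x)$-space with the observation that a continuous function which is nonpositive a.e.\ is nonpositive everywhere. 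Both hinge on the same two ingredients---Fubini and the spatial continuity guaranteed by $f\in\LC\subset\SC$---but your approach is more direct, yields a result that is independent of the particular solution, and makes the role of condition~(S) fully explicit; the paper's perturbation argument, by contrast, only needs continuity of $f(t,\cdot)$ near the solution curve, which could in principle be useful in less regular settings.
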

\begin{proof} Let  $V\subset \R\times\R^N$ be such that $\meas_{\R^{1+N}}\left(\R^{1+N}\setminus V\right)=0$ and
\begin{equation*}
2\,\langle f(t,x),x \rangle \le \alpha(t)\,|x|^2+\beta(t)\quad \text{ for all }\, (t,x)\in V .
\end{equation*}
Consider the set $E=\left\{(t,\ep)\in I\times B_1\mid \big(t, x(t)+\ep\big)\in V\right\}$, where $B_1$ is the closed ball of $\R^N$ centered at the origin and with radius $1$, and for any $t\in I$ denote by $E_t$ the section in $t$  of $E$, i.e. $E_t=\{ \ep\in B_1\mid (t,\ep)\in E \}$. Moreover, given $t\in I$ one has that $x(t) + (B_1\setminus E_t)\subset B_{r} \setminus V_t$ for some $r$, and hence $\meas_{\R^N}(B_1\setminus E_t)=0$ for almost every $t\in I$.
Then, applying Fubini's theorem twice, one has
\begin{equation*}
\meas_{\R}(I)\cdot \meas_{\R^N}(B_1)=\meas_{\R^{1+N}}(E)=\int_{\R^N} \meas_{\R}(E_\ep) \, d\ep \,,
\end{equation*}
where $E_\ep$ denotes the section of $E$ given for any fixed $\ep\in B_1$. Therefore, one has $\meas_{\R}(E_\ep)=\meas_{\R}(I)$ for almost every $\ep\in B_1$. Now, let $(\ep_n)_{\nin}\subset B_1$ be such that
 \begin{equation*}
\ep_n\xrightarrow{\nti}0\quad \text{ and }\quad \meas_{\R}(E_{\ep_n})=\meas_{\R}(I)\quad \forall \ n\in \N\,.
\end{equation*}
As a consequence, taking $J=\underset{n\in\N}{\cap} E_{\ep_n}$ we deduce that
\[2\,\langle f(t,x(t)+\ep_n),x(t)+\ep_n\rangle\le \alpha(t)\,|x(t)+\ep_n|^2+\beta(t)\quad \forall t\in J\,,\]
and as $n\to\infty$ we obtain~\eqref{desi:ae-dissp} because $\meas_{\R}(I)=\meas_{\R}(J)$.
\end{proof}
\begin{rmk}\label{rmk:26-05_19:11}
If $f\in\LC$ satisfies \textbf H$_1$ then, considering the Cauchy problem $\dot x =f(t,x)$, $x(t_0)=x_0$, and denoted by $x(\cdot)$ its solution, from~\eqref{desi:ae-dissp} and $|x(r)|^2=\langle x(r),x(r)\rangle$ one has that
\[ \frac{d}{dr}|x(r)|^2=2\,\langle x(r),f(r,x(r))\rangle\le \alpha(r)\,|x(r)|^2+\beta(r)\,,\quad \text{ for a.e. } r\in\R\,.\]
Then, a standard comparison argument yields
\begin{equation}\label{eq:solution-escalar}
|x(t)|^2  \le \exp\left(\int_{t_0}^t\alpha(u)\,du\right)|x_0|^2+ \int_{t_0}^t\beta(r)\exp\left(\int_r^t \alpha(u)\,du\right)dr \,.
\end{equation}
As a consequence, the solutions of such a differential system are defined on $[t_0,\infty)$  and thus a process $S_f(\cdot,\cdot)$ can be induced as in \eqref{eq:process}.
\end{rmk}
In addition to \textbf H$_1$ we also consider the following conditions:\par\smallskip
\begin{itemize}
\item[\textbf H$_2$:] the equation $\,\dot y=\alpha(t)\,y\,$ has exponential dichotomy on $(-\infty,0]$ with projection $P=\rm{Id}$, that is, there are  constants $\,\alpha_1>0$ and $K\ge 1$ such that
\begin{equation}\label{eq:dicho}
\exp\left(\int_s^t \alpha(u)\,du\right)\le K\,e^{-\alpha_1\,(t-s)} \quad \text{ for } s\le t\le 0\,;
\end{equation}
\item[\textbf H$_3$:] the set of functions $\{\beta_t(\cdot)\}_{t\in\R}$ is $L^1_{loc}$-bounded,
\end{itemize}
\par\smallskip
\noindent
Assumptions {\rm\textbf H$_1$}, {\rm\textbf H$_2$} and {\rm\textbf H$_3$} allow to obtain that the process $S_f(\cdot,\cdot)$ is  strongly pullback bounded dissipative  on $(-\infty,\tau]$ for all $\tau\in\R$.
\begin{thm}\label{thm:pullbound}
Consider $f\in\LC$ and assume that {\rm\textbf H$_1$}, {\rm\textbf H$_2$} and {\rm\textbf H$_3$} hold. Then the induced process $S_f(\cdot,\cdot)$ is  strongly pullback bounded dissipative on $(-\infty,\tau]$ for all $\tau\in\R$. Consequently, there exists a pullback attractor which is bounded in the past.
\end{thm}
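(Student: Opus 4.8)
The plan is to reduce everything to the explicit a priori bound \eqref{eq:solution-escalar} established in Remark~\ref{rmk:26-05_19:11}. Writing the process as in \eqref{eq:process}, the pullback images are $S_f(t,t-s)\,x_0 = x(t,f,t-s,x_0)$ with $s\ge 0$, so \eqref{eq:solution-escalar} applied with $t_0=t-s$ gives
\[
|S_f(t,t-s)\,x_0|^2\le\exp\!\left(\int_{t-s}^t\alpha(u)\,du\right)|x_0|^2 + \int_{t-s}^t\beta(r)\exp\!\left(\int_r^t\alpha(u)\,du\right)dr\,.
\]
Thus it suffices to control the homogeneous term (first summand) and the forcing term (second summand) uniformly for $t\le\tau$ and $s$ large.

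Next I would estimate the two summands separately. For the homogeneous term, when $t\le 0$ assumption {\rm\textbf H$_2$}, i.e.\ \eqref{eq:dicho}, applies directly on $[t-s,t]$ and yields $\exp(\int_{t-s}^t\alpha)\le K e^{-\alpha_1 s}$; when $0<t\le\tau$ I would split $\int_{t-s}^t=\int_{t-s}^0+\int_0^t$ (legitimate once $s$ is large enough that $t-s\le 0$), apply \eqref{eq:dicho} to the first piece and bound the second by $C_\tau:=\int_0^\tau|\alpha(u)|\,du$, obtaining a bound of the form $K e^{C_\tau}e^{\alpha_1\tau}e^{-\alpha_1 s}$. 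In both cases this term tends to $0$ as $s\to\infty$, uniformly in $t\le\tau$ and in $|x_0|\le d$. For the forcing term I would set $M:=\sup_{v\in\R}\int_v^{v+1}\beta(u)\,du$, which is finite by {\rm\textbf H$_3$}; after the substitution $w=t-r$ and the same dichotomy estimate, covering $[0,s]$ by unit intervals and summing the resulting geometric series $\sum_{n\ge 0}e^{-\alpha_1 n}$ bounds $\int_{-\infty}^t\beta(r)\exp(\int_r^t\alpha)\,dr$ by a finite constant $R(t)^2$; splitting at $0$ once more shows $\sup_{t\le\tau}R(t)^2=:R_\tau^2<\infty$.

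Finally I would assemble the absorbing family. Since $\beta\ge 0$, the forcing term is monotone in $s$ and bounded above by $R(t)^2$, so taking $B(t)$ to be the closed ball of radius $\sqrt{\,r(t)^2+1\,}$ with $r(t):=\sup_{\sigma\le t}R(\sigma)$ produces a pullback bounded absorbing family: for $|x_0|\le d$ and $s$ beyond a threshold $T(t,d)$ the homogeneous term is smaller than $1$, so $S_f(t,t-s)\,x_0\in B(t)$. For the strong dissipativity \eqref{def:strpullbound} on $(-\infty,\tau]$ I would make the threshold uniform in $t\le\tau$: for such $t$ one has $|S_f(t,t-s)\,x_0|^2<r(t)^2+1\le r(\tau)^2+1$, whence $S_f(t,t-s)\,D\subset B(\tau)$ for all $t\le\tau$ and $s\ge T(\tau,D)$. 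The existence of a pullback attractor bounded in the past then follows from Remark~\ref{rmk:bound-attrac}. The main obstacle is precisely the uniformity in $t\le\tau$ when $\tau>0$: there {\rm\textbf H$_2$} no longer covers the whole integration interval, and one must isolate the contribution of $[0,\tau]$ (absorbing the factor $e^{C_\tau}$) while still extracting the decay in $s$ of the homogeneous term and the finiteness of $\sup_{t\le\tau}R(t)$ from {\rm\textbf H$_3$}.
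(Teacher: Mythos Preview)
Your proposal is correct and follows essentially the same strategy as the paper: start from the comparison estimate~\eqref{eq:solution-escalar}, bound the homogeneous piece via {\rm\textbf H$_2$} (with the extra factor $e^{C_\tau}e^{\alpha_1\tau}$ when $0<t\le\tau$), control the forcing term by {\rm\textbf H$_3$} through a geometric series over unit intervals, and then exploit the monotonicity in $t$ of the resulting constants to obtain~\eqref{def:strpullbound}. The only presentational difference is that the paper packages your case split ``$t\le 0$ versus $0<t\le\tau$'' into a single nondecreasing function $K(t):=K\,e^{\alpha_1 t}\exp\!\big(\int_0^t|\alpha|\big)$ for $t\ge 0$ and $K(t):=K$ for $t\le 0$, so that $\exp(\int_s^r\alpha)\le K(t)\,e^{-\alpha_1(r-s)}$ for all $s\le r\le t$; this streamlines the bookkeeping for both summands simultaneously but is exactly your splitting in disguise.
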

\begin{proof}
First we check  from \textbf H$_2$ that there is a nondecreasing function $K(t)\ge 1$  such that
  \begin{equation}\label{eq:dicho-t}
    \exp\left(\int_s^r \alpha(u)\,du\right)\le K(t)\,e^{-\alpha_1\,(r-s)} \quad \text{ for  } s\le r\le t\,.
  \end{equation}
Let $t\ge 0$ and denote $N(t)=\exp\left(\int_0^t |\alpha(u)|\,du\right)$.  From~\eqref{eq:dicho} we deduce that
\[
\exp\left(\int_s^r \alpha(u)\,du\right)\le K\,e^{\alpha_1\,s} N(t)=K\,e^{\alpha_1\,r}\,N(t)\,e^{-\alpha_1\,(r-s)}\,,
\]
and~\eqref{eq:dicho-t} holds for $K(t):=K\,e^{\alpha_1\,t}\,N(t)\,$ for $t\ge 0$, and $K(t):=K$ for $t\le 0$.\par\smallskip
Let $D$ be a bounded set. Thus, there is a positive constant $d>0$ such that $\sup_{x\in D}|x|\le d$. We take $x_0\in D$, $t\in\R$, $s\ge 0$  and denote by $x(\cdot):=x(\cdot,f,t-s,x_0)$, i.e. the solution of the Cauchy problem $\dot x= f(t,x)\,,\ x(t-s)=x_0$.
In particular, since  $S_f(t,t-s)\,x_0=x(t,f,t-s,x_0)=x(t)$
from~\eqref{eq:solution-escalar} and~\eqref{eq:dicho-t}  we deduce that
\begin{equation}\label{desi-para-coro}
|S_f(t,t-s)\,x_0|^2 \le |x_0|^2\,K(t)\,e^{-\alpha_1\,s}+I(t,s)
\;
\end{equation}
where
\begin{align*}
I(t,s)& =\int_{t-s}^t\beta(r)\exp\left(\int_r^t \alpha(u)\,du\right)dr \le  K(t)\int_{-\infty}^t e^{-\alpha_1\,(t-r)}\, \beta(r)\,dr \\
& = K(t) \int_{-t}^{\infty} e^{-\alpha_1\,(t+u)}\, \beta(-u)\,du\,.
\end{align*}
Now, from \textbf H$_3$ there is a  $c_1>0$ such that $\int_t^{t+1}\beta(u)\,du\le c_1$ for every $t\in\R$ and, hence,  if we decompose
$[-t,\infty)\subset \bigcup^{\infty}_{j=0}[-t+j,-t+j+1]\,,$ we obtain
\begin{equation}\label{eq:04-07_13:32}
I(t,s) \le K(t)\sum_{j=0}^{\infty}\int_{-t+j}^{-t+j+1} e^{-\alpha_1j}\,\beta(-u)\,du
 \le  c_1\,K(t) \sum_{j=0}^{\infty} e^{-\alpha_1j}\le \frac{c_1\,K(t)}{1-e^{-\alpha_1}}\,,
\end{equation}
because $\alpha_1>0$. Therefore,  denoting by $\rho^{2}(t):=1+ c_1\,K(t)/(1-e^{-\alpha_1})$
one has
\[|S_f(t,t-s)\,x_0|^2\le d^2\,K(t)\,e^{-\alpha_1\,s}+I(t,s)\le \rho^{2}(t)\,,\]
provided that $s\ge \ln (d^2 \,K(t))/\alpha_1 :=T(t,D)>0$. \par\smallskip
Hence, $\{B_{\rho(t)}\mid t\in\R\}$ is a family of bounded absorbing sets. In addition, since the function $K(t)$ is nondecreasing, then $\rho(t)$ and  $T(t,D)$ are also nondecreasing. Therefore,  we deduce that
\[S_f(t,t-s)\,D\subset B_{\rho(\tau)} \quad \text{for } t\le \tau \text{ and } s\ge T(\tau,D),\]
and the process is  strongly pullback bounded dissipative on $(-\infty,\tau]$ for all $\tau\in\R$, as claimed. The existence of a pullback attractor bounded in the past follows from Remark~\ref{rmk:bound-attrac}.
\end{proof}
Consequently, an application of Theorem~\ref{thm:alphalimit} provides for each $g$ in the alpha limit set $\mathds{A}(f)$ the existence of a bounded pullback attractor for the process~$S_g(\cdot,\cdot)$.
\begin{cor}\label{coro:alpha}
 Let $f$ be in $\LC$ and $\T$ be a topology such that the induced local skew-product flow on $\mathrm{Hull}_{(\LC,\T)}(f)\times\R^N$ is continuous. Under assumptions {\rm\textbf H$_1$}, {\rm\textbf H$_2$} and {\rm\textbf H$_3$}, for each $g\in\mathds{A}(f)$ the solutions of $\dot x=g(t,x)$ are uniformly ultimately bounded.  In particular, the induced process $S_g(\cdot,\cdot)$ has a bounded pullback attractor.
\end{cor}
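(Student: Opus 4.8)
The plan is to deduce the statement directly from Theorem~\ref{thm:alphalimit}, whose hypotheses I will supply from Theorem~\ref{thm:pullbound} together with a forward global existence check. Since $f$ satisfies \textbf{H}$_1$, \textbf{H}$_2$ and \textbf{H}$_3$, Theorem~\ref{thm:pullbound} already guarantees that the induced process $S_f(\cdot,\cdot)$ is strongly pullback bounded dissipative on $(-\infty,\tau]$ for every $\tau\in\R$; in particular such a $\tau$ exists. Combined with the assumed continuity of the local skew-product flow on $\mathrm{Hull}_{(\LC,\T)}(f)\times\R^N$, the only missing ingredient of Theorem~\ref{thm:alphalimit} is that for every $g\in\{f_s\mid s\le 0\}\cup\mathds{A}(f)$ and every $x_0\in\R^N$ the solution $x(\cdot,g,x_0)$ be defined on $[0,\infty)$. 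Thus the whole proof reduces to verifying this global forward existence.

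For the translates $g=f_s$ with $s\le0$ this is immediate: each $f_s$ inherits \textbf{H}$_1$ with the shifted data $\alpha_s(\cdot),\,\beta_s(\cdot)\in L^1_{loc}$ and $\beta_s\ge0$, whence Remark~\ref{rmk:26-05_19:11} and the comparison estimate~\eqref{eq:solution-escalar} give forward global existence. The genuine obstacle is the case $g\in\mathds{A}(f)$, where $g=\lim_{\nti}f_{t_n}$ with $t_n\downarrow-\infty$ in $(\LC,\T)$: one cannot invoke \textbf{H}$_1$ for $g$ directly, since neither the sign nor the size of the $\alpha$-term is controlled under the weak (or strong) limit. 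I would instead obtain a uniform a priori bound for the approximating solutions and then pass to the limit. Fix $x_0$ and $T>0$, and take $n$ large enough that $t_n+T\le0$; by the previous paragraph $x(\cdot,f_{t_n},x_0)$ is defined on $[0,\infty)$. For $0\le r\le t\le T$ one has $t_n+r\le t_n+t\le0$, so \textbf{H}$_2$ yields $\exp\!\bigl(\int_r^t\alpha(t_n+u)\,du\bigr)=\exp\!\bigl(\int_{t_n+r}^{t_n+t}\alpha(u)\,du\bigr)\le K\,e^{-\alpha_1(t-r)}\le K$. Inserting this into~\eqref{eq:solution-escalar} for $f_{t_n}$ and controlling the inhomogeneous term exactly as in~\eqref{eq:04-07_13:32} through the constant $c_1$ provided by \textbf{H}$_3$, one gets $|x(t,f_{t_n},x_0)|^2\le K\,|x_0|^2+c_1\,K/(1-e^{-\alpha_1})=:M^2$ for all $t\in[0,T]$, with $M$ independent of both $n$ and $T$.

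With this bound in hand the conclusion follows. By the continuity of the skew-product flow (Theorem~\ref{thm:ContODETthetaE} in Case~1, and the analogous strong-topology continuity in Case~2, both applicable since the $m$-bounds, hence the whole orbit closure, remain $L^1_{loc}$-equicontinuous by Corollary~\ref{cor:17-05_13:35}), on every compact subinterval of $I_{g,x_0}$ the solutions $x(\cdot,f_{t_n},x_0)$ converge uniformly to $x(\cdot,g,x_0)$; hence $|x(t,g,x_0)|\le M$ for every $t\in I_{g,x_0}\cap[0,\infty)$. Were $b_{g,x_0}<\infty$, Theorem~\ref{thm:05.07-13:44} would force $|x(t,g,x_0)|\to\infty$ as $t\to b_{g,x_0}$, contradicting this bound; therefore $x(\cdot,g,x_0)$ is defined on $[0,\infty)$. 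All hypotheses of Theorem~\ref{thm:alphalimit} being verified, that theorem gives, for each $g\in\mathds{A}(f)$, the uniform ultimate boundedness of the solutions of $\dot x=g(t,x)$ and, via Remarks~\ref{rmk:unifultboun} and~\ref{rmk:bound-attrac}, a bounded pullback attractor for $S_g(\cdot,\cdot)$. The only delicate point is the global forward existence for alpha-limit vector fields; I note that the uniform estimate above already reproves the ultimate bound, but routing the final conclusion through Theorem~\ref{thm:alphalimit} keeps everything inside the established framework.
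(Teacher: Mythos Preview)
Your proof is correct and follows essentially the same approach as the paper: reduce to Theorem~\ref{thm:alphalimit} via Theorem~\ref{thm:pullbound}, and establish the missing forward global existence for $g\in\mathds{A}(f)$ by deriving a uniform bound $|x(t,f_{t_n},x_0)|^2\le K|x_0|^2+c_1K/(1-e^{-\alpha_1})$ on the approximants (the paper obtains this directly from~\eqref{desi-para-coro} and~\eqref{eq:04-07_13:32} with $K(0)=K$) and then passing to the limit through the continuity of the skew-product flow to preclude blow-up. Your parenthetical invocation of Corollary~\ref{cor:17-05_13:35} is unnecessary here, since continuity of the flow is already part of the hypothesis, but it does no harm.
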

\begin{proof}
In order to apply Theorem~\ref{thm:alphalimit}, we only need to prove that for any $g\in\mathds{A}(f)$ and any $x_0\in\R^N$ the solution of $\ \dot x=g(t,x),\ x(0)=x_0\ $ is defined on $[0,\infty)$. From~\eqref{desi-para-coro} and~\eqref{eq:04-07_13:32} and recalling that the function $K(\cdot)$ is non decreasing, one has that for any~$d\ge0$, and $x_0\in\R^N$ with  $|x_0|<d$
\begin{equation}
|S_f(t,t-s)\,x_0|^2=|x(s,f_{t-s},x_0)|\le c^2(d)\qquad \text{for all } t\le0,\; s\ge 0 \,,
\label{eq:11/08_17:09}
\end{equation}
where $c^2(d)=K(0)\left(d^2+c_1/(1-e^{-\alpha_1})\right)$. Let us fix $s\in [0,b_{g,x_0})$ and take $g=\lim_{n\to\infty} f_{t_n}$ with $t_n\downarrow -\infty$. Notice that, for any $\nin$, one may write $x(s,f_{t_n},x_0)$ as $x(s,f_{(t_n+s)-s},x_0)$. Thus, considered $n_0\in\N$ such that $t_n+s\le 0$ for any $n\ge n_0$, one has that $x(s,f_{t_n},x_0)=x(s,f_{(t_n+s)-s},x_0)$ satisfies \eqref{eq:11/08_17:09} for any $n\ge n_0$ and, by the continuity of the flow, the sequence $\big(x(s,f_{t_n},x_0)\big)_\nin$ converges to $x(s,g,x_0)$. Therefore, we conclude that
\begin{equation*}
 |x(s,g,x_0)|\le c(d)\quad \text{for all } s\in[0,b_{g,x_0})\; \text{ and }\,x_0\in\R^N\text{ with } |x_0|<d\,.
\end{equation*}
As a consequence, one has that the solution $x(\cdot, g,x_0)$ of $\ \dot x=g(s,x),\ x(0)=x_0\ $ can not explode in finite time, i.e. it has to be defined on $[0,\infty)$. Otherwise it is easy to prove that a contradiction arises.
One concludes the proof applying Theorem~\ref{thm:alphalimit}.
\end{proof}
In order to have that for  all $\tau\in\R$ the solutions of $\dot x=f(t,x)$ are uniformly ultimately bounded on  $[\tau,\infty)$, we change hypothesis (\textbf H$_2$)~by
\par\smallskip
\begin{itemize}
\item[\textbf H$_2^*$:] the linear equation $\,\dot y=\alpha(t)\,y\,$ has exponential dichotomy on $(0,\infty]$ with projection $P=\rm{Id}$, i.e. there is an $\,\alpha_1>0$ and a constant $K\ge 1$ such that
    \begin{equation*}\label{eq:dicho+}
    \exp\left(\int_s^t \alpha(u)\,du\right)\le K\,e^{-\alpha_1\,(t-s)} \quad \text{ for every } 0\le s\le t\,.
    \end{equation*}
 \end{itemize}
\begin{thm}\label{thm:uniultboun}
Under conditions {\rm\textbf H$_1$}, {\rm\textbf H$_2^*$} and {\rm\textbf H$_3$}, for each fixed $\tau\in\R$ the solutions of $\dot x=f(t,x)$ are uniformly ultimately bounded on $[\tau,\infty)$.
\end{thm}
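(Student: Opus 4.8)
The plan is to derive this as the forward-time counterpart of Theorem~\ref{thm:pullbound}, replacing the dichotomy on $(-\infty,0]$ by the dichotomy on $[0,\infty)$ supplied by \textbf{H}$_2^*$. Fix $\tau\in\R$ and $d>0$, take $r\ge\tau$ and $x_0\in\R^N$ with $|x_0|\le d$, and set $x(\cdot)=x(\cdot,f,r,x_0)$. The starting point is the comparison inequality~\eqref{eq:solution-escalar} of Remark~\ref{rmk:26-05_19:11}, which with $t_0=r$, evaluated at $t+r$ for $t\ge 0$, reads
\begin{equation*}
|x(t+r)|^2\le \exp\!\left(\int_r^{t+r}\alpha(u)\,du\right)|x_0|^2+\int_r^{t+r}\beta(s)\exp\!\left(\int_s^{t+r}\alpha(u)\,du\right)ds\,.
\end{equation*}
I would first treat the case $r\ge 0$. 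Then $0\le s\le t+r$ for every $s\in[r,t+r]$, so \textbf{H}$_2^*$ gives $\exp(\int_s^{t+r}\alpha)\le K\,e^{-\alpha_1(t+r-s)}$; in particular the homogeneous term is bounded by $K\,e^{-\alpha_1 t}d^2$. For the inhomogeneous term, the substitution $w=t+r-s$ combined with the unit-interval decomposition used in~\eqref{eq:04-07_13:32} and the bound $\int_t^{t+1}\beta\le c_1$ from \textbf{H}$_3$ yields $\int_r^{t+r}\beta(s)\,e^{-\alpha_1(t+r-s)}ds\le c_1/(1-e^{-\alpha_1})$. Hence $|x(t+r)|^2\le K\,e^{-\alpha_1 t}d^2+K\,c_1/(1-e^{-\alpha_1})$, a bound independent of $\tau$ that drops below $1+K\,c_1/(1-e^{-\alpha_1})$ once $t\ge\ln(K d^2)/\alpha_1$.

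The remaining case, relevant only when $\tau<0$, is $\tau\le r<0$; here I would additionally require $t\ge -\tau$, so that $t+r\ge 0$, and split both integrals at $0$. On $[0,t+r]$ the estimates of the previous case apply verbatim. On $[r,0]$ I would write $\int_s^{t+r}\alpha=\int_s^0\alpha+\int_0^{t+r}\alpha$ and bound $\exp(\int_s^0\alpha)\le\exp(\int_\tau^0|\alpha(u)|\,du)=:N(\tau)$ for all $s\in[\tau,0]$, while $\exp(\int_0^{t+r}\alpha)\le K\,e^{-\alpha_1(t+r)}$ by \textbf{H}$_2^*$. Since $\beta\ge 0$ and $r\ge\tau$ give $\int_r^0\beta\le\int_\tau^0\beta=:M(\tau)<\infty$, the contribution of $[r,0]$ to the inhomogeneous term is at most $N(\tau)\,K\,M(\tau)\,e^{-\alpha_1(t+r)}$, and the homogeneous term is at most $N(\tau)\,K\,e^{-\alpha_1(t+r)}d^2$. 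Using $e^{-\alpha_1(t+r)}\le e^{-\alpha_1\tau}e^{-\alpha_1 t}$, all these extra terms decay in $t$ and are again $\le 1$ for $t$ large.

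Collecting the two cases, setting $c(\tau)^2:=1+K\,c_1/(1-e^{-\alpha_1})$ augmented by the finitely many $\tau$-dependent constants produced above, and taking $T(\tau,d)$ to be the maximum of the thresholds appearing in each case, one obtains $|S_f(t+r,r)\,x_0|=|x(t+r)|\le c(\tau)$ whenever $r\ge\tau$, $t\ge T(\tau,d)$ and $|x_0|\le d$, which is precisely uniform ultimate boundedness on $[\tau,\infty)$. The one genuinely delicate point is the case $r<0$: because \textbf{H}$_2^*$ controls $\exp(\int_s^t\alpha)$ only for nonnegative arguments, the dichotomy cannot be applied across $0$, and the remedy is exactly the splitting at $0$ together with the observation that the finiteness of $\int_\tau^0|\alpha|$ and of $\int_\tau^0\beta$ (guaranteed by $\alpha,\beta\in L^1_{loc}$ on the compact interval $[\tau,0]$) yields $\tau$-dependent but finite constants, while the surviving factor $e^{-\alpha_1(t+r)}$ still forces decay in $t$.
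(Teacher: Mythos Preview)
Your proof is correct and follows essentially the same approach as the paper's: the paper packages the extension of the dichotomy from $[0,\infty)$ to $[t_0,\infty)$ into a single non-increasing function $K(t_0)$ (mirroring the non-decreasing $K(t)$ of Theorem~\ref{thm:pullbound}), whereas you carry out the equivalent case split at $r=0$ explicitly, controlling the compact interval $[\tau,0]$ through $N(\tau)=\exp(\int_\tau^0|\alpha|)$ and $M(\tau)=\int_\tau^0\beta$. The two arguments are the same in substance; the paper's formulation is slightly more compact but yours makes the role of the dichotomy threshold at $0$ more transparent.
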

\begin{proof} As in Theorem~\ref{thm:pullbound} we can prove the existence of a non-increasing function $K(\cdot)\ge 1$  such that
  \begin{equation*}\label{eq:dicho-t+}
    \exp\left(\int_s^r \alpha(u)\,du\right)\le K(t_0)\,e^{-\alpha_1\,(r-s)} \quad \text{ for every } t_0\le s\le r\,,
  \end{equation*}
and hence,
\[
|S_f(t+t_0,t_0)\,x_0|^2 \le |x_0|^2K(t_0)\,e^{-\alpha_1 t_0}+ I(t,t_0)
\]
where
 \begin{align*}
 I(t,t_0)&=\int_{t_0}^{t+t_0}\beta(r)\exp\left(\int_r^{t+t_0} \alpha(u)\,du\right)dr\le K(t_0)\int_{t_0}^{t+t_0}\beta(r)\,e^{-\alpha_1\,(t+t_0-r)}\,dr\\
 &= K(t_0)\int_{-t_0}^{t-t_0}\beta(t-u)\,e^{-\alpha_1\,(t_0+u)}\,du\le K(t_0)\int_{-t_0}^{\infty}\beta(t-u)\,e^{-\alpha_1\,(t_0+u)}\,du\,.
 \end{align*}
 Again, as in Theorem~\ref{thm:pullbound}, from~\textbf H$_3$ we deduce that
 $I(t,t_0)\le c_1\,K(t_0)/(1-e^{-\alpha_1})$ and denoting $\;c^2(t_0):=1+ c_1\,K(t_0)/(1-e^{-\alpha_1})\;$ and $\;T(t_0,d):=(\ln (d^2\,K(t_0))/\alpha_1\,$, it holds
\[ |S_f(t+t_0,t_0)\,x_0|\le c(t_0) \quad \text{ whenever } t\ge T(t_0,d) \;\text{ and }\; |x_0|\le d\,,\]
 and the non-increasing character of $c(\cdot)$ and $T(\cdot,d)$ proves~\eqref{desi:ultimate} and finishes the proof.
\end{proof}
As in Corollary~\ref{coro:alpha}, from the inequalities obtained in Theorem~\ref{thm:uniultboun} we can check that for any $g\in\mathds{O}(f)$ and any $x_0\in\R^n$, the solution $x(\cdot,g,x_0)$ of the Cauchy problem $\ \dot x=g(t,x),\ x(0)=x_0\ $ is defined on $[0,\infty)$. Hence, an application of Theorem~\ref{th:boundedpullback} provides, for each $g$ in the omega limit set $\mathds{O}(f)$, the existence of a bounded pullback attractor for the induced process $S_g(\cdot,\cdot)$.
\begin{cor} Let $f$ be in $\LC$ and $\T$ be a topology such that the  induced local skew-product flow on $\mathrm{Hull}_{(\LC,\T)}(f)\times\R^N$ is continuous  and assume that conditions {\rm\textbf H$_1$}, {\rm\textbf H$_2^*$} and {\rm\textbf H$_3$} hold. Then, for each $g\in\mathds{O}(f)$ the solutions of $\dot x=g(t,x)$ are uniformly ultimately bounded
and the induced process $S_g(\cdot,\cdot)$ has a bounded pullback attractor.
\end{cor}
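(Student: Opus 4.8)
The plan is to mirror the proof of Corollary~\ref{coro:alpha}, exchanging the alpha-limit ingredients for their omega-limit counterparts: the role of Theorem~\ref{thm:pullbound} is taken over by Theorem~\ref{thm:uniultboun}, and the concluding application of Theorem~\ref{thm:alphalimit} becomes an application of Theorem~\ref{th:boundedpullback}. Since Theorem~\ref{thm:uniultboun} already delivers, under \textbf H$_1$, \textbf H$_2^*$ and \textbf H$_3$, that for each $\tau\in\R$ the solutions of $\dot x=f(t,x)$ are uniformly ultimately bounded on $[\tau,\infty)$, the only hypothesis of Theorem~\ref{th:boundedpullback} still to be checked is the global forward existence of $x(\cdot,g,x_0)$ for every $g\in\{f_s\mid s\geq 0\}\cup\mathds{O}(f)$ and every $x_0\in\R^N$.

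For $g=f_s$ with $s\geq 0$ this is immediate: by \textbf H$_1$ and Remark~\ref{rmk:26-05_19:11} the solutions of $\dot x=f(t,x)$ are defined on $[t_0,\infty)$ for any initial time, so the solution of $\dot x=f_s(t,x),\ x(0)=x_0,$ which is a time-translation of one such solution, is defined on $[0,\infty)$. For $g\in\mathds{O}(f)$ I would extract from the computations in the proof of Theorem~\ref{thm:uniultboun} the a~priori estimate $|S_f(t+t_0,t_0)\,x_0|^2\le |x_0|^2K(t_0)\,e^{-\alpha_1 t}+I(t,t_0)$ together with $I(t,t_0)\le c_1\,K(t_0)/(1-e^{-\alpha_1})$, and use that $K(\cdot)$ is non-increasing so that $K(t_0)\le K(0)$ whenever $t_0\geq 0$. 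This yields a bound
\[
|x(t,f_{t_0},x_0)|=|S_f(t+t_0,t_0)\,x_0|\le c(d)\qquad\text{for all } t\geq 0,\ t_0\geq 0,\ |x_0|\le d,
\]
with $c(d)$ independent of the translation time $t_0$.

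Next, writing $g=\lim_{n\to\infty}f_{t_n}$ with $t_n\uparrow\infty$, I fix $x_0$ with $|x_0|<d$ and any $s\in[0,b_{g,x_0})$. For $n$ large enough one has $t_n\geq 0$, so the displayed bound applied with $t_0=t_n$ gives $|x(s,f_{t_n},x_0)|\le c(d)$; by the continuity of the skew-product flow, $x(s,f_{t_n},x_0)\to x(s,g,x_0)$, whence $|x(s,g,x_0)|\le c(d)$ for every $s\in[0,b_{g,x_0})$. A solution that remains bounded cannot blow up in finite time, so Theorem~\ref{thm:05.07-13:44} forces $b_{g,x_0}=\infty$; that is, $x(\cdot,g,x_0)$ is globally defined forward.

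With this the hypotheses of Theorem~\ref{th:boundedpullback} are fully verified, and applying that theorem to the given $g\in\mathds{O}(f)$ yields both that the solutions of $\dot x=g(t,x)$ are uniformly ultimately bounded and that the induced process $S_g(\cdot,\cdot)$ admits a bounded pullback attractor. The only non-routine point in the whole argument is obtaining the a~priori bound \emph{uniformly} in $t_0$; this is precisely where the monotonicity of $K(\cdot)$ established in Theorem~\ref{thm:uniultboun} is essential, while everything else is a direct transcription of the already proven results.
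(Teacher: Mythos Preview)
Your proposal is correct and follows exactly the approach the paper indicates: the paper's text preceding this corollary states that ``as in Corollary~\ref{coro:alpha}, from the inequalities obtained in Theorem~\ref{thm:uniultboun} we can check that for any $g\in\mathds{O}(f)$ and any $x_0\in\R^N$, the solution $x(\cdot,g,x_0)$ \ldots\ is defined on $[0,\infty)$,'' after which Theorem~\ref{th:boundedpullback} applies. You have correctly filled in these details, including the key observation that the non-increasing character of $K(\cdot)$ from Theorem~\ref{thm:uniultboun} yields an a~priori bound uniform in $t_0\ge 0$, which then passes to the limit along $t_n\uparrow\infty$.
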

Next, substituting  \textbf H$_2$ with the stronger assumption below, we obtain  a  pullback bounded absorbing set $B$ satisfying~\eqref{eq:bounabs} which is what we need in the assumptions of Theorem \ref{thm:pullbackHull}.
 \par\smallskip
\begin{itemize}
\item[\textbf H$_2^\bullet$:] the linear equation $\,\dot y=\alpha(t)\,y\,$ has exponential dichotomy on $\R$ with projection $P=\rm{Id}$, i.e. there is an $\,\alpha_1>0$ and a constant $K\ge 1$ such that
    \begin{equation}\label{eq:dichoR}
    \exp\left(\int_s^t \alpha(u)\,du\right)\le K\,e^{-\alpha_1\,(t-s)} \quad \text{ for  } s\le t\,;
    \end{equation}
 \end{itemize}
\begin{thm}\label{thm:boundedpullbackHull}
Consider $f\in\LC$ and assume that {\rm\textbf H$_1$}, {\rm\textbf H$_2^\bullet$} and {\rm\textbf H$_3$} hold. Then there is a  pullback bounded absorbing set $B$ satisfying~\eqref{eq:bounabs} and, hence, the induced process $S_f(\cdot,\cdot)$ has a bounded pullback attractor.
\end{thm}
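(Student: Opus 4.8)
The plan is to mimic the proof of Theorem~\ref{thm:pullbound}, exploiting the stronger assumption {\rm\textbf H$_2^\bullet$} to make every constant uniform in $t$, so that the $t$\nbd-dependent absorbing family $\{B_{\rho(t)}\}$ obtained there collapses to a single ball $B=B_\rho$. Since {\rm\textbf H$_1$} holds, the process $S_f(\cdot,\cdot)$ is well defined by Remark~\ref{rmk:26-05_19:11}, and it suffices to exhibit a bounded set $B$ fulfilling~\eqref{eq:bounabs}; the conclusion about the bounded pullback attractor then follows from Remarks~\ref{rmk:unifultboun} and~\ref{rmk:bound-attrac}.

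First I would fix a bounded set $D$ with $\sup_{x\in D}|x|\le d$, take $x_0\in D$, $t\in\R$, $s\ge 0$, and write $x(\cdot):=x(\cdot,f,t-s,x_0)$, so that $S_f(t,t-s)\,x_0=x(t)$. Starting from~\eqref{eq:solution-escalar} with $t_0=t-s$ and applying the dichotomy estimate~\eqref{eq:dichoR} from {\rm\textbf H$_2^\bullet$}, which now holds for all pairs $s\le t$ on the whole line with the \emph{same} constants $K$ and $\alpha_1$, I obtain
\[
|S_f(t,t-s)\,x_0|^2\le K\,e^{-\alpha_1 s}\,d^2+K\int_{t-s}^{t}\beta(r)\,e^{-\alpha_1(t-r)}\,dr.
\]
The first summand is $\le K\,d^2\,e^{-\alpha_1 s}$, which decays to $0$ as $s\to\infty$ \emph{uniformly in $t$}.

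The decisive point is that the integral term admits a bound independent of both $t$ and $s$. The change of variable $u=t-r$ turns it into $K\int_0^s\beta(t-u)\,e^{-\alpha_1 u}\,du\le K\int_0^\infty\beta(t-u)\,e^{-\alpha_1 u}\,du$. Splitting $[0,\infty)$ into the unit intervals $[j,j+1]$ and using {\rm\textbf H$_3$}, which provides $c_1>0$ with $\int_\tau^{\tau+1}\beta\le c_1$ for \emph{every} $\tau\in\R$ (exactly as in Theorem~\ref{thm:pullbound}), yields
\[
K\int_0^\infty\beta(t-u)\,e^{-\alpha_1 u}\,du\le K\,c_1\sum_{j=0}^\infty e^{-\alpha_1 j}=\frac{K\,c_1}{1-e^{-\alpha_1}}.
\]
Setting $\rho^2:=1+K\,c_1/(1-e^{-\alpha_1})$, a genuine constant, I conclude that $|S_f(t,t-s)\,x_0|\le\rho$ as soon as $K\,e^{-\alpha_1 s}d^2\le 1$, i.e. for $s\ge T(D):=\ln(K\,d^2)/\alpha_1$, and this holds for all $t\in\R$ simultaneously. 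Hence $B:=B_\rho$ is a pullback bounded absorbing set satisfying~\eqref{eq:bounabs}.

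There is no genuine obstacle beyond bookkeeping: the only place where Theorem~\ref{thm:pullbound} produced the growing factor $K(t)$ was the one\nbd-sided character of {\rm\textbf H$_2$}, and replacing it by the two\nbd-sided {\rm\textbf H$_2^\bullet$} removes that $t$\nbd-dependence entirely. The one thing to check carefully is that the unit\nbd-interval estimate from {\rm\textbf H$_3$} is applied to the \emph{translated} arguments $t-u$ uniformly in $t$, which is precisely the content of the $L^1_{loc}$\nbd-boundedness of $\{\beta_\tau\}_{\tau\in\R}$; everything else is the computation of Theorem~\ref{thm:pullbound} carried out with constant $K(t)\equiv K$.
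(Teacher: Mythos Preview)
Your proposal is correct and follows essentially the same approach as the paper: both observe that {\rm\textbf H$_2^\bullet$} makes the function $K(t)$ from Theorem~\ref{thm:pullbound} collapse to the constant $K$, so that $\rho(t)$ and $T(t,D)$ become the constants $\rho^2=1+Kc_1/(1-e^{-\alpha_1})$ and $T(D)=\ln(Kd^2)/\alpha_1$, yielding a single absorbing ball $B_\rho$ satisfying~\eqref{eq:bounabs}. The paper's proof is in fact briefer than yours, simply pointing back to Theorem~\ref{thm:pullbound} with this observation; your write-up spells out the change of variable and the unit-interval summation explicitly, which is fine.
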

\begin{proof}
Since the constant $K$ in inequality~\eqref{eq:dichoR} holds for every $s\le t$, reasoning as in Theorem~\ref{thm:pullbound} one has that
\begin{equation}\label{eq:desi}
S_f(t,t-s)\,D\subset B_{\rho} \quad \text{for every } t\in\R \text{ and  } s\ge T(D),
\end{equation}
where $\rho^2=1+ c_1\,K/(1-e^{-\alpha_1})$,  $T(D)=\ln (d^2 \,K)/\alpha_1$, and~\eqref{eq:bounabs} holds with $B_\rho$, as stated. In particular, $S_f(\cdot,\cdot)$ has a bounded pullback attractor.
\end{proof}
As a consequence, an application of Theorem~\ref{thm:pullbackHull} provides the existence of a bounded pullback attractor for the process $S_g(\cdot,\cdot)$ induced by $g\in\mathrm{Hull}_{(\LC,\T)}(f)$. From the inequalities derived from~\eqref{eq:desi}, it is easy to check that the induced skew-product semiflow \eqref{skew-product} is defined on $\R^+\!\!\times\mathrm{Hull}_{(\LC,\T)}(f)\times\R^N$, and it is skipped.
\begin{cor} Let $f$ be in $\LC$ and $\T$ be a topology such that the induced local skew-product flow on $\mathrm{Hull}_{(\LC,\T)}(f)\times\R^N$ is continuous and assume that {\rm\textbf H$_1$}, {\rm\textbf H$_2^\bullet$} and {\rm\textbf H$_3$} hold. Then if $g\in\mathrm{Hull}_{(\LC,\T)}(f)$ one has that the solutions of $\dot x=g(t,x)$ are uniformly ultimately bounded,  and the induced process $S_g(\cdot,\cdot)$ has a bounded pullback attractor.
\end{cor}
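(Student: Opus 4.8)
The plan is to obtain the statement as a direct combination of Theorem~\ref{thm:boundedpullbackHull} and Theorem~\ref{thm:pullbackHull}. Indeed, under {\rm\textbf H$_1$}, {\rm\textbf H$_2^\bullet$} and {\rm\textbf H$_3$}, Theorem~\ref{thm:boundedpullbackHull} already produces a pullback bounded absorbing set $B=B_\rho$ for $S_f(\cdot,\cdot)$ satisfying~\eqref{eq:bounabs}, namely~\eqref{eq:desi}. This is exactly the hypothesis required by Theorem~\ref{thm:pullbackHull}, whose thesis is precisely what we want to prove. Hence, once I check that the induced skew-product semiflow~\eqref{skew-product} is defined on the whole $\R^+\!\!\times\mathrm{Hull}_{(\LC,\T)}(f)\times\R^N$, Theorem~\ref{thm:pullbackHull} will apply verbatim and conclude.

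The only point that needs an argument is therefore the global forward definition of the solutions over the hull; this is the step I expect to be the main (and essentially the sole) obstacle, and it is the part the discussion preceding the statement skips. First I would exploit that in {\rm\textbf H$_2^\bullet$} the constant $K\ge 1$ in~\eqref{eq:dichoR} is \emph{independent of} $t$, in contrast with the growing $K(t)$ of the one-sided case treated in Theorem~\ref{thm:pullbound}. Feeding~\eqref{eq:dichoR} into~\eqref{eq:solution-escalar} and estimating the forcing integral exactly as in~\eqref{eq:04-07_13:32} by means of {\rm\textbf H$_3$}, I would obtain the uniform a priori bound
\begin{equation*}
|x(t,f_{t_0},x_0)|^2\le K\,|x_0|^2+\frac{c_1\,K}{1-e^{-\alpha_1}}=:c^2(|x_0|)\,,\qquad t\ge 0\,,
\end{equation*}
valid for every $t_0\in\R$ with one and the same triple $K,\alpha_1,c_1$.

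Finally, I would transfer this bound to an arbitrary $g\in\mathrm{Hull}_{(\LC,\T)}(f)$ by the limiting argument already used in Corollary~\ref{coro:alpha}: fixing $x_0$ and $s\in[0,b_{g,x_0})$, writing $g=\lim_{\nti}f_{t_n}$ in $(\LC,\T)$, and invoking the continuity of the flow to get $x(s,g,x_0)=\lim_{\nti}x(s,f_{t_n},x_0)$, so that $|x(s,g,x_0)|\le c(|x_0|)$ on the whole $[0,b_{g,x_0})$. Since the solution stays bounded it cannot explode in finite time, hence $b_{g,x_0}=\infty$ and~\eqref{skew-product} is globally defined in forward time on the hull. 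With this in hand the application of Theorem~\ref{thm:pullbackHull} is legitimate and delivers, for each $g\in\mathrm{Hull}_{(\LC,\T)}(f)$, both the uniformly ultimately bounded character of the solutions of $\dot x=g(t,x)$ and the bounded pullback attractor for $S_g(\cdot,\cdot)$, completing the proof.
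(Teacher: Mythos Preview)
Your proposal is correct and follows exactly the route the paper takes: apply Theorem~\ref{thm:boundedpullbackHull} to obtain the uniform absorbing set~\eqref{eq:desi}, use the resulting uniform a priori estimate together with the continuity of the skew-product flow to show global forward existence over the hull (the step the paper explicitly declares ``easy to check'' and skips), and then invoke Theorem~\ref{thm:pullbackHull}. Your filling-in of the skipped step, via the $t$-independent constant $K$ in~\eqref{eq:dichoR} and the limiting argument of Corollary~\ref{coro:alpha}, is precisely what the paper intends.
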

We summarize the results for the existence of a pullback and a global attractor for the induced skew-product semiflow in the following remark.
\begin{rmk}  Under assumptions {\rm\textbf H$_1$}, {\rm\textbf H$_2^\bullet$} and {\rm\textbf H$_3$}, (i) and (ii) of Theorem~\ref{thm:skpHull} hold.  The same happens for Corollary~\ref{skpAlpha} (resp.~\ref{skpOmega}) when {\rm\textbf H$_1$}, {\rm\textbf H$_2$} and {\rm\textbf H$_3$} (resp. {\rm\textbf H$_1$}, {\rm\textbf H$_2^*$} and {\rm\textbf H$_3$}) are assumed.
\end{rmk}
\subsection{Comparison with a system of Carath\'eodory linear equations}\label{system}
 In this subsection we use a system of linear equations in order to control the vector field of our Carath\'eodory differential equation. Let us set some notation. In the following, for every $i=1,\dots,N$ the $i$th component of $x\in\R^N$ will be denoted by $x_i$. Moreover, if we write $x\ge0$ we mean that for all $i=1,\dots,N$ one has $x_i\ge0$, whereas we will write $x\gg 0$ if for every $i=1,\ldots N$ one has $x_i>0$. The space $\big(\R^N\big)^+$ will denote the set of points $x\in\R^N$ such that $x\ge0$. Analogously, the $i$th component of a vector function $f\colon\R\times\R^N\to\R^N$ will be denoted by $f_i$. We consider the new assumptions for $f\in\LC$:\par\smallskip
\begin{itemize}
 \item[\textbf A$_1$:] if $x\ge 0$ with $x_i=0$, then $f_i(t,x)\ge 0$ for a.e. $t\in\R$;\smallskip
 \item[\textbf A$_2$:] for a.e $(t,x)\in\R\times\big(\R^N\big)^+$
    \begin{equation*}
     f(t,x)\le A(t)\,x +b(t)\,,
     \end{equation*}
    where the functions $A(\cdot)=[a_{ij}(\cdot)]\in L^1_{loc}\big(\R^{N\times N}\big)$, $a_{ij}(\cdot)\ge 0$ for every $i\neq j$, $b(\cdot)\in L^1_{loc}\big(\R^N\big)$, and $b(t)\ge 0$ for every $t\in\R$;\smallskip
 \item[\textbf A$_3$:] the linear equation $\,\dot y=A(t)\,y\,$ has  exponential dichotomy on $(-\infty,0]$ with projection $P=\rm{Id}$, i.e. there is an $\,\alpha_1>0$ and a constant $K\ge 1$ such that
    \begin{equation*}\label{eq:dicho-sys}
    \|\Phi(t)\,\Phi^{-1}(s)\|\le K\,e^{-\alpha_1\,(t-s)} \quad \text{ for  } s\le t\le 0\,,
    \end{equation*}
    where $\Phi(t)$ is the fundamental matrix solution with $\Phi(0)={\rm I}_N$;\smallskip
 \item[\textbf A$_4$:] the set of functions $\{b_t(\cdot)\}_{t\in\R}$ is $L^1_{loc}\big(\R^N\big)$-bounded.
 \end{itemize}
 \smallskip
 The inequality in {\rm\textbf A$_2$} actually holds for the positive solutions of $\dot x=f(t,x)$. The proof is similar to the one of
 Proposition~\ref{prop:aet} and thus omitted.
\begin{prop}
Let $f$ be a function in $\LC$ satisfying {\rm\textbf A$_2$}. If $x(t)$ is a solution of $\dot x=f(t,x)$ defined on an interval $I$, with $x(t)\ge0$ for all $t\in I$, then
\begin{equation}\label{eq:desi-sys}
f\big(t,x(t)\big)\le A(t)\,x(t) +b(t)\, \quad \text{\rm{ for a.e. }} t\in I\,.
\end{equation}
\end{prop}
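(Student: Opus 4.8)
The plan is to mirror the measure-theoretic argument used for Proposition~\ref{prop:aet}, with one essential modification that respects the fact that the pointwise inequality in \textbf{A}$_2$ is only assumed on the positive orthant $\big(\R^N\big)^+$. First I would fix a Borel set $V\subset\R\times\big(\R^N\big)^+$ with $\meas_{\R^{1+N}}\big((\R\times\big(\R^N\big)^+)\setminus V\big)=0$ on which $f(t,x)\le A(t)\,x+b(t)$ holds for every $(t,x)\in V$; such a $V$ exists by the definition of \textbf{A}$_2$.

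Because the solution satisfies $x(t)\ge 0$, I cannot perturb $x(t)$ in arbitrary directions without leaving $\big(\R^N\big)^+$, as was done in Proposition~\ref{prop:aet}. So instead of the full unit ball I would work with $B_1^+=B_1\cap\big(\R^N\big)^+$, which has positive Lebesgue measure, and set $E=\{(t,\ep)\in I\times B_1^+\mid (t,x(t)+\ep)\in V\}$. For $\ep\in B_1^+$ one has $x(t)+\ep\ge 0$, so the translated point stays in the domain of validity. Writing $E_t$ for the $t$-section, a translation (hence measure-preserving) argument together with Fubini's theorem applied to the null complement of $V$ gives $\meas_{\R^N}(B_1^+\setminus E_t)=0$ for a.e.\ $t\in I$, exactly as in Proposition~\ref{prop:aet}.

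Then I would apply Fubini a second time to obtain
\[
\meas_\R(I)\cdot \meas_{\R^N}(B_1^+)=\meas_{\R^{1+N}}(E)=\int_{\R^N}\meas_\R(E_\ep)\,d\ep\,,
\]
where $E_\ep$ is the section of $E$ for fixed $\ep\in B_1^+$, and conclude that $\meas_\R(E_\ep)=\meas_\R(I)$ for a.e.\ $\ep\in B_1^+$. Choosing a sequence $(\ep_n)_\nin\subset B_1^+$ with $\ep_n\to 0$ and $\meas_\R(E_{\ep_n})=\meas_\R(I)$ for every $n$, and putting $J=\bigcap_\nin E_{\ep_n}$ (so that $\meas_\R(J)=\meas_\R(I)$), I would have
\[
f\big(t,x(t)+\ep_n\big)\le A(t)\,\big(x(t)+\ep_n\big)+b(t)\qquad\text{for all }t\in J,\ n\in\N\,.
\]

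Finally I would let $n\to\infty$. Since $f\in\LC$, the map $f(t,\cdot)$ is continuous for a.e.\ $t$, so (after intersecting $J$ with the corresponding full-measure set) $f\big(t,x(t)+\ep_n\big)\to f\big(t,x(t)\big)$, while $A(t)\big(x(t)+\ep_n\big)+b(t)\to A(t)\,x(t)+b(t)$; the componentwise inequality is preserved in the limit, yielding \eqref{eq:desi-sys} for a.e.\ $t\in I$. The main---indeed essentially the only---obstacle compared with Proposition~\ref{prop:aet} is precisely this restriction of the admissible perturbations to the nonnegative cone: one must verify that $B_1^+$ still has positive measure (so that the second Fubini step produces a full-measure set of admissible $\ep$'s) and that the positivity $x(t)\ge 0$ guarantees $x(t)+\ep_n\ge 0$, keeping every perturbed point inside the region where \textbf{A}$_2$ is assumed.
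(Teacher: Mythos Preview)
Your proposal is correct and follows essentially the same approach the paper indicates (the paper merely states that the proof is similar to that of Proposition~\ref{prop:aet} and omits it). Your modification---replacing $B_1$ by $B_1^+=B_1\cap(\R^N)^+$ so that every perturbation $x(t)+\ep$ remains in the positive orthant where \textbf{A}$_2$ applies---is exactly the adaptation required and the only substantive change from the model argument.
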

Conditions \textbf A$_1$ and \textbf A$_2$ imply that the system $\dot x=f(t,x)$ induces a continuous time process on~$\big(\R^N\big)^+$, as shown in the following result.
\begin{prop}\label{prop:positive}
Let $f$ be a function in $\LC$ and $x(t,f,t_0,x_0)$  the solution of~\eqref{eq:ivp} with $x_0\ge 0$.
\begin{itemize}
\item[\rm(i)] If  $f$ satisfies {\rm\textbf A$_1$}, then $x(t,f,t_0,x_0)\ge 0$ for every $t\ge t_0$  on its maximal interval of existence.
\item[\rm(ii)] If $f$ satisfies {\rm\textbf A$_1$} and {\rm\textbf A$_2$}, then $x(t,f,t_0,x_0)$
is defined on $[t_0,\infty)$.
\end{itemize}
As a consequence, under assumptions {\rm\textbf A$_1$} and {\rm\textbf A$_2$}  a continuous time process is induced on $\big(\R^N\big)^+$ by
\begin{equation}\label{eq:process+}
S_f(t,s)\,x_0=x(t,f,s,x_0)=x(t-s,f_s,x_0)\ge 0\,,\;\; \forall\; t\ge s\,\text{ and } x_0\in\big(\R^N\big)^+\,.
\end{equation}
\end{prop}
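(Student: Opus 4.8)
The plan is to treat the two statements separately: part (i) is a positive-invariance (quasipositivity) result, while part (ii) follows from (i) together with a comparison with the cooperative linear system $\dot y=A(t)\,y+b(t)$. For (i), fix $t_0\in\R$ and $x_0\ge 0$, write $x(\cdot)=x(\cdot,f,t_0,x_0)$ for the solution on its maximal interval, and fix any compact $[t_0,T]$ inside it. Denote by $x^+(t)$ and $x^-(t)$ the vectors of positive and negative parts, $x_i^\pm(t)=\max\{\pm x_i(t),0\}$, so that $x-x^+=-x^-$ and $x^+(t)\in\{z\ge 0\mid z_i=0\}$ wherever $x_i(t)<0$; set $V(t)=\tfrac12\,|x^-(t)|^2$. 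Since $x(\cdot)$ is absolutely continuous and $s\mapsto\max\{-s,0\}$ is Lipschitz, $V$ is absolutely continuous, $V(t_0)=0$ and $V\ge 0$, and the goal is to show $V\equiv 0$. The first and main obstacle is that \textbf{A}$_1$ is an a.e.-in-$t$ statement \emph{for each fixed} boundary point, whereas I must use it along the moving curve $x^+(\cdot)$, which passes through the measure-zero faces of $(\R^N)^+$. I would resolve this using that $f\in\LC\subset\SC$ is continuous in $x$ for a.e.\ $t$: for each index $i$, pick a countable dense subset $D$ of the face $F_i=\{z\ge 0\mid z_i=0\}$, and intersect the full-measure sets on which $f_i(\cdot,z)\ge 0$ over $z\in D$ together with the full-measure set on which $f(t,\cdot)$ is continuous, obtaining a full-measure set $T_i$ such that $f_i(t,z)\ge 0$ for \emph{all} $z\in F_i$ and all $t\in T_i$. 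This upgrades \textbf{A}$_1$ to a clause holding uniformly in $x$ for a.e.\ $t$, and parallels the role of the Fubini step in Proposition~\ref{prop:aet}.

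With this in hand I would compute, for a.e.\ $t$,
\[ V'(t)=\sum_{i\,:\,x_i(t)<0} x_i(t)\,f_i\big(t,x(t)\big), \]
and split $f_i(t,x(t))=f_i(t,x^+(t))+\big[f_i(t,x(t))-f_i(t,x^+(t))\big]$. For $t\in\bigcap_i T_i$ the first contribution is $\le 0$, since $x_i(t)<0$ while $f_i(t,x^+(t))\ge 0$; the second is controlled by the Lipschitz bound $l^K$ of $f$ on a ball $B_K$ containing the values of $x(\cdot)$ (hence also of $x^+(\cdot)$) on $[t_0,T]$, giving $|f(t,x(t))-f(t,x^+(t))|\le l^K(t)\,|x^-(t)|$ and therefore $V'(t)\le 2\sqrt N\,l^K(t)\,V(t)$. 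Gronwall's inequality together with $V(t_0)=0$ forces $V\equiv 0$ on $[t_0,T]$, i.e.\ $x(t)\ge 0$; as $T$ is arbitrary, (i) follows.

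For (ii), I would invoke (i) to get $x(t)\ge 0$ on $[t_0,b_{f,x_0})$, so that the previous proposition applies and $\dot x(t)=f(t,x(t))\le A(t)\,x(t)+b(t)$ componentwise a.e., by \eqref{eq:desi-sys}. Because the off-diagonal entries of $A(\cdot)$ are nonnegative, the field $x\mapsto A(t)x+b(t)$ is quasimonotone increasing, so a standard comparison for cooperative Carath\'eodory systems—or, self-containedly, the same Lyapunov/Gronwall estimate applied to $z=y-x$, which satisfies $\dot z\ge A(t)\,z$ and $z(t_0)\ge 0$—yields $0\le x(t)\le y(t)$, where $y(\cdot)$ solves $\dot y=A(t)\,y+b(t)$, $y(t_0)=x_0$. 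As $y$ is the solution of a nonhomogeneous linear system with $L^1_{loc}$ coefficients, it is defined on all of $[t_0,\infty)$ and is bounded on compact intervals; hence $x$ stays bounded on every compact subinterval and, by the blow-up alternative of Theorem~\ref{thm:05.07-13:44}, cannot cease to exist in finite forward time, so $b_{f,x_0}=\infty$.

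Finally, global forward existence and the invariance $x(t)\ge 0$ make $S_f(t,s)\,x_0=x(t-s,f_s,x_0)$ a well-defined map of $(\R^N)^+$ into itself; the identity and two-parameter semigroup laws are immediate from uniqueness of solutions and the translation relation $x(t,f,s,x_0)=x(t-s,f_s,x_0)$, while joint continuity of $(t,s,x_0)\mapsto S_f(t,s)\,x_0$ follows from continuous dependence on the initial data (Theorem~\ref{thm:ContODETthetaE}). The delicate point throughout is the upgrade of \textbf{A}$_1$ along the solution curve; the remaining arguments are comparison and Gronwall estimates.
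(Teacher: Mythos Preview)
Your proof is correct, and for part~(i) it takes a genuinely different route from the paper. The paper first passes to $x_0\gg 0$ by continuous dependence on initial data, then argues by contradiction: at a first time $t_1$ where some component---say the first---hits zero, it freezes the remaining components and views $\widetilde x_1$ as a solution of a scalar Carath\'eodory equation $\dot y=g(t,y)$ with $g(t,0)\ge 0$, invoking the scalar comparison principle of Olech--Opial to force $\widetilde x_1(t_1)>0$. Your approach is a direct Lyapunov/Gronwall argument on $V(t)=\tfrac12|x^-(t)|^2$, splitting $f_i(t,x(t))$ against $f_i(t,x^+(t))$ and using the local Lipschitz bound to absorb the remainder. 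Your method is more self-contained (no external comparison lemma) and, notably, you make explicit the upgrade of \textbf{A}$_1$ from ``for each boundary point $x$, a.e.\ in $t$'' to ``for a.e.\ $t$, all boundary points $x$'' via a countable dense subset and the continuity of $f(t,\cdot)$; the paper applies \textbf{A}$_1$ along the moving curve $(0,\widetilde x_2(t),\ldots)$ without spelling this out. The paper's route, on the other hand, is slightly shorter once one is willing to cite the scalar comparison result, and it highlights the scalar structure.

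For part~(ii) the two proofs are essentially the same: both use \eqref{eq:desi-sys} to get $\dot x\le A(t)x+b(t)$ componentwise, then the quasimonotonicity of the linear system to compare with the globally defined solution $y$ of $\dot y=A(t)y+b(t)$, $y(t_0)=x_0$, concluding via the blow-up alternative. Your sketch of a self-contained proof of the comparison (applying the same $|z^-|^2$ estimate to $z=y-x$ with $\dot z\ge A(t)z$) works, using $a_{ij}\ge 0$ for $i\ne j$ to control the cross-terms by $\|A(t)\|\,|z^-|^2$.
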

\begin{proof} (i) From the continuity with respect to initial data, it is enough to check that $\widetilde x(t)=x(t,f,t_0,x_0)\gg 0$ for $t\ge t_0$ whenever $x_0\gg 0$. Assume, on the opposite, that there is a first time $t_1>t_0$ for which one of the components vanishes. By simplicity of notation let the first one to be such a component. Then,
$\widetilde x_1(t)>0$ for $t\in[t_0,t_1)$ and $\widetilde x_1(t_1)=0$.\par
Notice that  $\widetilde x_1(t)$ is the solution of the scalar Carath\'{e}odory Cauchy value problem  $\dot y=g(t,y)\,,\;$ $y(t_0)=(x_0)_1$, with $g$  defined by
\[g(t,y(t))=f_1(t,y(t),\widetilde x_2(t),\ldots,\widetilde x_N(t))\,,\]
$f_1$ being the first component of the function $f$.
From {\rm\textbf A$_1$} we deduce that $g(t,0)=f_1(t,0,\widetilde x_2(t),\ldots,\widetilde x_N(t)))\ge 0$ for almost every $t$ in the maximal interval of definition of $\widetilde x$. Therefore, denoting by $n(\cdot)\equiv 0$, one has
\[\dot n(t)\le g(t,n(t)) \quad \text{ for a.e. } t\in[t_0,t_1]\,,\]
and the comparison theorem for Carath\'{e}odory scalar differential equations (see Olech and Opial~\cite{paper:olop}) yields
$\,n(t)\le y(t,t_0,g,0)\,$ for every $t\in[t_0,t_1]$. Moreover, since $ y(t,g,t_0,0)< y(t,g,t_0,(x_0)_1)=\widetilde x_1(t)$  we deduce that $0<\widetilde x_1(t)$ for every $ t\in[t_0,t_1]$, contradicting that $\widetilde x_1(t_1)=0$ and finishing the proof of (i).
\par
(ii) For simplicity of notation, let  $x(t)=x(t,f,t_0,x_0)$. From~\eqref{eq:desi-sys}  we deduce that
$\dot x(t)\le A(t)\,x(t) +b(t)$  for a.e.  $t$. Thus, since $a_{ij}(\cdot)\ge 0$ for  $i\neq j$, the linear system $\dot y=A(t)\,y+b(t)\,$ is quasi-monotone and  the comparison argument for Carath\'{e}odory systems, which is a consequence of the scalar one, yields
$x(t)\le y(t)$ for every $t\ge t_0$ where $y(t)$ denotes the solution  of $\dot y=A(t)\,y+b(t)\,$ with initial data $y(t_0)=x_0$. This fact together with the inequality $0\le x(t)$ shown in (i), finishes the proof.
\end{proof}
The following result provides the existence of a pullback attractor bounded in the past in $\big(\R^N\big)^+$.
\begin{thm}\label{thm:pullbound-sys} Let $f$ be a function in $\LC$ satisfying {\rm\textbf A$_1$}, {\rm\textbf A$_2$}, {\rm\textbf A$_3$} and {\rm\textbf A$_4$}. Then, the induced process~\eqref{eq:process+}  is strongly pullback bounded dissipative  on $(-\infty,\tau]$ for all $\tau\in\R$ and, as a consequence, there exists a pullback attractor which is bounded in the past.
\end{thm}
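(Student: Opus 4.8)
The plan is to mirror the argument of Theorem~\ref{thm:pullbound}, replacing the scalar comparison of Remark~\ref{rmk:26-05_19:11} by the vector comparison furnished by Proposition~\ref{prop:positive}, and the exponential dichotomy of $\dot y=\alpha(t)\,y$ by that of the linear system $\dot y=A(t)\,y$. Throughout, the phase space is the cone $\big(\R^N\big)^+$, so that ``bounded set'' means a bounded $D\subset\big(\R^N\big)^+$; this is exactly the setting in which the process~\eqref{eq:process+} is defined and in which the comparison principle applies.

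First I would reduce the estimate on $S_f$ to an estimate on the linear comparison system. Fix a bounded $D\subset\big(\R^N\big)^+$ with $\sup_{x\in D}|x|\le d$, pick $x_0\in D$, $t\in\R$ and $s\ge0$, and set $x(\cdot)=x(\cdot,f,t-s,x_0)$ and $y(\cdot)$ the solution of $\dot y=A(t)\,y+b(t)$ with $y(t-s)=x_0$. By Proposition~\ref{prop:positive}(i) one has $x(r)\ge0$, and the quasimonotone comparison carried out in the proof of Proposition~\ref{prop:positive}(ii) gives $0\le x(r)\le y(r)$ componentwise for $r\ge t-s$; since both vectors are nonnegative this yields $|S_f(t,t-s)\,x_0|=|x(t)|\le|y(t)|$. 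Writing $y$ by variation of constants, $y(t)=\Phi(t)\Phi^{-1}(t-s)\,x_0+\int_{t-s}^t\Phi(t)\Phi^{-1}(r)\,b(r)\,dr$, it then suffices to control $\|\Phi(t)\Phi^{-1}(r)\|$ for $t-s\le r\le t$.

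The main step---and the only place where the system case genuinely differs from the scalar one---is to upgrade the dichotomy estimate of {\rm\textbf A$_3$}, which holds only for $s\le t\le0$, to a bound valid up to an arbitrary ceiling. Exactly as in the derivation of~\eqref{eq:dicho-t} within the proof of Theorem~\ref{thm:pullbound}, I would produce a nondecreasing function $K(\cdot)\ge1$ with
\begin{equation*}
\|\Phi(t)\Phi^{-1}(s)\|\le K(t_0)\,e^{-\alpha_1\,(t-s)}\qquad\text{for }s\le t\le t_0\,.
\end{equation*}
This follows from the cocycle identity $\Phi(t)\Phi^{-1}(s)=\big(\Phi(t)\Phi^{-1}(0)\big)\big(\Phi(0)\Phi^{-1}(s)\big)$ together with {\rm\textbf A$_3$} for the factor evolving up to time $0$ and the Gronwall bounds $\|\Phi(t)\Phi^{-1}(0)\|,\ \|\Phi(0)\Phi^{-1}(s)\|\le\exp\!\big(\int_0^{t_0}\|A(u)\|\,du\big)$, available since $A\in L^1_{loc}\big(\R^{N\times N}\big)$ by {\rm\textbf A$_2$}; splitting into the sign regimes of $s$ and $t$ and absorbing the resulting constants gives a $K(t_0)$ that one checks is nondecreasing. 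The noncommutativity of $\Phi$ and the need to bound the backward-in-time evolution $\Phi(0)\Phi^{-1}(s)$ make this the delicate point, though it remains a routine Gronwall estimate rather than a genuine obstacle.

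With $K(\cdot)$ in hand I would conclude as in Theorem~\ref{thm:pullbound}. Combining the previous two steps,
\begin{equation*}
|S_f(t,t-s)\,x_0|\le d\,K(t)\,e^{-\alpha_1\,s}+\int_{t-s}^t K(t)\,e^{-\alpha_1\,(t-r)}\,|b(r)|\,dr\,,
\end{equation*}
and {\rm\textbf A$_4$} provides a constant $c_1$ with $\int_\sigma^{\sigma+1}|b(u)|\,du\le c_1$ for every $\sigma\in\R$; decomposing $[t-s,t]$ into unit intervals and summing the resulting geometric series bounds the integral by $c_1\,K(t)/(1-e^{-\alpha_1})$. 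Hence, setting $\rho(t)=1+c_1\,K(t)/(1-e^{-\alpha_1})$ and $T(t,D)=\ln\!\big(d\,K(t)\big)/\alpha_1$, one obtains $|S_f(t,t-s)\,x_0|\le\rho(t)$ whenever $s\ge T(t,D)$, so $\{B_{\rho(t)}\cap\big(\R^N\big)^+\mid t\in\R\}$ is a family of pullback bounded absorbing sets. Since $K(t)$, and therefore $\rho(t)$ and $T(t,D)$, are nondecreasing, $S_f(t,t-s)\,D\subset B_{\rho(\tau)}\cap\big(\R^N\big)^+$ for all $t\le\tau$ and $s\ge T(\tau,D)$, which is precisely~\eqref{def:strpullbound}. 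Thus the process is strongly pullback bounded dissipative on $(-\infty,\tau]$ for every $\tau\in\R$, and Remark~\ref{rmk:bound-attrac} yields the pullback attractor bounded in the past.
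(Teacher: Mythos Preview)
Your proposal is correct and follows essentially the same route as the paper: extend the dichotomy bound from {\rm\textbf A$_3$} to $(-\infty,t_0]$ with a nondecreasing $K(t_0)$, invoke the componentwise comparison $0\le x(r)\le y(r)$ from Proposition~\ref{prop:positive}, apply variation of constants, bound the inhomogeneous integral via {\rm\textbf A$_4$} and a geometric series, and conclude from monotonicity of $K(\cdot)$. The paper is terser, simply pointing back to the proof of Theorem~\ref{thm:pullbound} for the final steps, but the argument is the same; your linear (rather than quadratic) definitions of $\rho(t)$ and $T(t,D)$ are the correct adaptation here since the estimate is on $|x|$ rather than $|x|^2$.
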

\begin{proof} As is Theorem~\ref{thm:pullbound},  from {\rm\textbf A$_3$} we deduce the existence of exponential dichotomy on $(-\infty,t]$ for any fixed $t\ge 0$. More precisely, there is a nondecreasing function $K(\cdot)\ge 1$  such that
  \begin{equation}\label{eq:dicho-t-sys}
    \|\Phi(r)\,\Phi^{-1}(s)\|\le K(t)\,e^{-\alpha_1\,(r-s)} \quad \text{ for } s\le r\le t\,.
  \end{equation}
Let $D$ be a bounded set of $\big(\R^N\big)^+$. Thus, there is a positive constant $d>0$ such that $\sup_{x\in D}|x|\le d$. We take $x_0\in D$, $s\ge 0$  and consider $x(r):=x(r,f,t-s,x_0)$, i.e. the solution of the Cauchy problem $\dot x(r)=f(r,x(r))\,$, $x(t-s)=x_0\,$. As in  Proposition~\ref{prop:positive}, we deduce that $0\le x(r)\le y(r)$
 for every $r\in [t-s,t]$ where $y(r)$ denotes the solution  of $\dot y=A(r)\,y+b(r)\,$ with initial data $y(t-s)=x_0$, that~is,
\[ 0\le x(t)\le \Phi(t)\,\Phi^{-1}(t-s)\,x_0+\int_{t-s}^t \Phi(t)\,\Phi^{-1}(r)\,b(r)\,dr\,.\]
Therefore, inequality~\eqref{eq:dicho-t-sys} provides
\[
|S_f(t,t-s)\,x_0|\le |x_0|\,K(t)\,e^{-\alpha_1\,s}+ K(t)\int_{t-s}^t e^{-\alpha_1\,(t-r)}\,|b(r)|\,dr\,,
\]
and the rest of the proof follows step by step the one of Theorem~\ref{thm:pullbound} and thus it is omitted.
\end{proof}
\begin{rmk} The part in condition \textbf A$_2$ which implies that the system $\dot y=A(r)\,y+b(r)\,$  is quasi monotone, i.e. $a_{ij}(\cdot)\ge 0$ for  $i\neq j$, can be substituted  by the quasi monotone condition for $\dot x=f(r,x)$, that is,
\[ f_i(r,x)\le f_i(r,z)\quad \text{ whenever } x\le z \text { and } x_i=z_i\,.\]
In this case, maintaining the notation of Theorem~\ref{thm:pullbound-sys}, we would obtain \[f(r,y(r))\le \dot y(r) \quad \text{ for a.e. } r\,,\]
which implies $x(r)\le y(r)$ for every $r\in [t-s,t]$, and the rest of the proof remains the same.
\end{rmk}
From Theorems~\ref{thm:pullbound-sys} and~\ref{thm:alphalimit}  we obtain we following result, whose proof is omitted because it is analogous to the one of Corollary~\ref{coro:alpha}.
\begin{cor}
 Let $f$ be a function in $\LC$ and $\T$ be a topology such that the induced skew-product flow on $\mathrm{Hull}_{(\LC,\T)}(f)\times(\R^N)^+$ is continuous. If $f$ satisfies {\rm\textbf A$_1$}, {\rm\textbf A$_2$}, {\rm\textbf A$_3$} and {\rm\textbf A$_4$}, and $g\in\mathds{A}(f)$, then the solutions of $\dot x=g(t,x)$ are uniformly ultimately bounded, and the  induced process  $S_g(\cdot,\cdot)$ on $\big(\R^N\big)^+$  has a bounded pullback attractor.
\end{cor}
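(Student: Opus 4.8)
The plan is to deduce the statement from Theorem~\ref{thm:alphalimit} (in its version on the positively invariant cone $\big(\R^N\big)^+$), exactly as Corollary~\ref{coro:alpha} was obtained from the scalar estimates of Theorem~\ref{thm:pullbound}. Invoking that theorem requires two ingredients. First, for every $g\in\{f_s\mid s\le 0\}\cup\mathds{A}(f)$ and every $x_0\in\big(\R^N\big)^+$ the solution $x(\cdot,g,x_0)$ must be defined on $[0,\infty)$. Second, $S_f(\cdot,\cdot)$ must be strongly pullback bounded dissipative on $(-\infty,\tau]$ for some $\tau$. The latter is precisely Theorem~\ref{thm:pullbound-sys}, so the real work lies in the global existence claim.

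For the translates $f_s$ with $s\le 0$ I would first note that \textbf A$_1$ and \textbf A$_2$ are stable under time translation, since $(f_s)_i(t,x)=f_i(t+s,x)\ge 0$ when $x\ge 0$ and $x_i=0$, and $f_s(t,x)\le A(t+s)\,x+b(t+s)$; hence Proposition~\ref{prop:positive}(ii) applies verbatim and yields global forward existence on $\big(\R^N\big)^+$. For $g\in\mathds{A}(f)$ the argument mirrors Corollary~\ref{coro:alpha}: I extract from the proof of Theorem~\ref{thm:pullbound-sys} the bound
\[
|S_f(t,t-s)\,x_0|\le |x_0|\,K(t)\,e^{-\alpha_1 s}+K(t)\int_{t-s}^t e^{-\alpha_1(t-r)}|b(r)|\,dr,
\]
and, using the $L^1_{loc}$-boundedness of $\{b_t(\cdot)\}_{t\in\R}$ from \textbf A$_4$ together with the unit-interval decomposition already employed there, bound the integral uniformly. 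As $K(\cdot)$ is nondecreasing this produces a constant $c(d)$ with
\[
|S_f(t,t-s)\,x_0|\le c(d)\qquad\text{for all }t\le 0,\ s\ge 0,\ x_0\in\big(\R^N\big)^+,\ |x_0|\le d.
\]

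Next I would pass to the limit. Writing $g=\lim_{n\to\infty}f_{t_n}$ with $t_n\downarrow-\infty$ and fixing $s\in[0,b_{g,x_0})$, for large $n$ one has $t_n+s\le 0$, whence $x(s,f_{t_n},x_0)=S_f(t_n+s,t_n)\,x_0$ is controlled by $c(d)$. Continuity of the skew-product flow forces $x(s,f_{t_n},x_0)\to x(s,g,x_0)$, so $|x(s,g,x_0)|\le c(d)$ throughout the maximal interval; positivity is preserved by Proposition~\ref{prop:positive}(i). By the blow-up alternative of Theorem~\ref{thm:05.07-13:44} the solution cannot explode, hence $b_{g,x_0}=\infty$. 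With both hypotheses of Theorem~\ref{thm:alphalimit} verified, that theorem delivers the uniform ultimate boundedness of the solutions of $\dot x=g(t,x)$ and the existence of a bounded pullback attractor for $S_g(\cdot,\cdot)$.

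The main obstacle is exactly this a priori bound excluding finite-time blow-up for the limiting equation: Theorem~\ref{thm:alphalimit} presupposes global forward existence, which is not automatic for $g\in\mathds{A}(f)$ and must be extracted from the dissipative estimate of Theorem~\ref{thm:pullbound-sys} combined with the continuity of the flow. Everything else is a routine transcription of the proof of Corollary~\ref{coro:alpha} to the cone $\big(\R^N\big)^+$.
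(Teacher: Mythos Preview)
Your proposal is correct and follows essentially the same approach as the paper, which explicitly omits the proof as analogous to Corollary~\ref{coro:alpha}: verify the global-existence hypothesis of Theorem~\ref{thm:alphalimit} by extracting the uniform bound from Theorem~\ref{thm:pullbound-sys}, passing to the limit via continuity of the skew-product flow, and invoking the blow-up alternative, then apply Theorem~\ref{thm:alphalimit}. One minor remark: positivity of $x(s,g,x_0)$ for $g\in\mathds{A}(f)$ is more cleanly obtained as the limit of the positive functions $x(s,f_{t_n},x_0)$ (each $f_{t_n}$ inherits \textbf{A$_1$}), rather than by citing Proposition~\ref{prop:positive}(i) directly for $g$, since $g$ is not a priori known to satisfy \textbf{A$_1$}.
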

Analogously if we change  hypothesis \textbf A$_3$ by
\begin{itemize}
\item[\textbf A$_3^*$:] the linear equation $\,\dot y=A(t)\,y\,$ has exponential dichotomy on $[0,\infty)$ with projection $P=\rm{Id}$, i.e. there is an $\,\alpha_1>0$ and a constant $K\ge 1$ such that
    \begin{equation*}\label{eq:dicho-sysvec}
    \|\Phi(t)\,\Phi^{-1}(s)\|\le K\,e^{-\alpha_1\,(t-s)} \quad \text{ for  } 0\le s\le t\,,
    \end{equation*}
    where $\Phi(t)$ is the fundamental matrix solution with $\Phi(0)={\rm I}_N$,\smallskip
\end{itemize}
we obtain a result analogous to Theorem~\ref{thm:uniultboun}, whose proof is omitted.
\begin{thm}
 Let $f$ be a function in $\LC$ satisfying {\rm\textbf A$_1$}, {\rm\textbf A$_2$}, {\rm\textbf A$_3^*$} and {\rm\textbf A$_4$}, for each fixed $\tau\in\R$ the solutions are uniformly ultimately bounded on $[\tau,\infty)$.
\end{thm}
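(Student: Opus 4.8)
The plan is to follow the scheme of Theorem~\ref{thm:uniultboun}, replacing the scalar comparison of Proposition~\ref{prop:aet} by the vector comparison of Proposition~\ref{prop:positive}, and the scalar exponential bound coming from \textbf{H$_2^*$} by the matrix dichotomy bound coming from \textbf{A$_3^*$}. Throughout I would work with the process induced on $\big(\R^N\big)^+$ by \eqref{eq:process+}, so that for $x_0\ge 0$ the solution $x(\cdot)=x(\cdot,f,t_0,x_0)$ satisfies $0\le x(t)\le y(t)$ componentwise, where $y(\cdot)$ solves the quasi-monotone linear system $\dot y=A(t)\,y+b(t)$ with $y(t_0)=x_0$; in particular $|x(t)|\le|y(t)|$ and the variation-of-constants formula gives $y(t)=\Phi(t)\,\Phi^{-1}(t_0)\,x_0+\int_{t_0}^t\Phi(t)\,\Phi^{-1}(r)\,b(r)\,dr$.

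First I would upgrade the dichotomy estimate from $[0,\infty)$ to $[t_0,\infty)$, for any fixed $t_0\in\R$, producing a \emph{non-increasing} function $K(\cdot)\ge 1$ with
\[\|\Phi(r)\,\Phi^{-1}(s)\|\le K(t_0)\,e^{-\alpha_1\,(r-s)}\qquad\text{for } t_0\le s\le r\,.\]
For $t_0\ge 0$ this is exactly \textbf{A$_3^*$} with $K(t_0)=K$. For $t_0<0$ and $s\le 0\le r$ I would factor $\Phi(r)\,\Phi^{-1}(s)=\big(\Phi(r)\,\Phi^{-1}(0)\big)\big(\Phi(0)\,\Phi^{-1}(s)\big)$, bounding the first factor by $K\,e^{-\alpha_1 r}$ via \textbf{A$_3^*$} and the second by $M(t_0):=\exp\!\big(\int_{t_0}^0\|A(u)\|\,du\big)$ through Gronwall's inequality (here $A\in L^1_{loc}(\R^{N\times N})$ is what makes $M(t_0)$ finite); absorbing the factor $e^{-\alpha_1 s}\le e^{-\alpha_1 t_0}$ then yields the claim with $K(t_0)=K\,M(t_0)\,e^{-\alpha_1 t_0}$, while the remaining sub-case $t_0\le s\le r\le 0$ is covered by the same Gronwall bound. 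Since $M(t_0)$ and $e^{-\alpha_1 t_0}$ decrease as $t_0$ increases, $K(\cdot)$ can be taken non-increasing.

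Next I would insert this bound into the comparison inequality to obtain, for $x_0\ge 0$ with $|x_0|\le d$, $t_0\in\R$ and $t\ge 0$,
\[|S_f(t+t_0,t_0)\,x_0|\le d\,K(t_0)\,e^{-\alpha_1 t}+K(t_0)\int_{t_0}^{t+t_0}e^{-\alpha_1\,(t+t_0-r)}\,|b(r)|\,dr\,.\]
The integral is then controlled exactly as in Theorems~\ref{thm:pullbound} and~\ref{thm:pullbound-sys}: by \textbf{A$_4$} there is $c_1>0$ with $\int_\sigma^{\sigma+1}|b(u)|\,du\le c_1$ for every $\sigma$, and covering $[t_0,t+t_0]$ by unit intervals and summing a geometric series bounds it by $c_1\,K(t_0)/(1-e^{-\alpha_1})$. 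Setting $c(t_0):=1+c_1\,K(t_0)/(1-e^{-\alpha_1})$ and $T(t_0,d):=\ln\!\big(d\,K(t_0)\big)/\alpha_1$ gives $|S_f(t+t_0,t_0)\,x_0|\le c(t_0)$ whenever $t\ge T(t_0,d)$. Finally, since $K(\cdot)$ is non-increasing, so are $c(\cdot)$ and $T(\cdot,d)$; hence for every $r\ge\tau$ one has $c(r)\le c(\tau)$ and $T(r,d)\le T(\tau,d)$, which is precisely the statement that the solutions are uniformly ultimately bounded on $[\tau,\infty)$, with constant $c(\tau)$ and time $T(\tau,d)$.

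The only genuinely new point is the extension of the dichotomy to $[t_0,\infty)$ with a monotone constant; everything else is a transcription of the scalar argument in Theorem~\ref{thm:uniultboun} and of the vector estimates in Theorem~\ref{thm:pullbound-sys}. Thus I expect the bookkeeping of the exponential factors in that extension, together with checking the monotonicity of $K(\cdot)$ across the three sub-cases $0\le s\le r$, $s\le 0\le r$ and $s\le r\le 0$, to be the main, though routine, obstacle.
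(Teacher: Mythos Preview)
Your proposal is correct and follows essentially the same approach as the paper, which omits the proof as being analogous to Theorem~\ref{thm:uniultboun} (with the vector comparison from Theorem~\ref{thm:pullbound-sys} replacing the scalar one). Your extension of the dichotomy constant to a non-increasing $K(\cdot)$ on $[t_0,\infty)$ via Gronwall on the finite interval $[t_0,0]$ is exactly the vector analogue of what the paper does in the scalar case, and the remaining estimates are straightforward transcriptions.
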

 In particular, this implies that Theorem~\ref{th:boundedpullback} holds in this case and we deduce the following result.
\begin{cor}  Let $f$ be a function in $\LC$ and $\T$ be a topology such that the induced skew-product flow on $\mathrm{Hull}_{(\LC,\T)}(f)\times(\R^N)^+$ is continuous. If $f$  satisfies conditions {\rm\textbf A$_1$}, {\rm\textbf A$_2$}, {\rm\textbf A$_3^*$} and {\rm\textbf A$_4$} and $g\in\mathds{O}(f)$, then the solutions of $\dot x=g(t,x)$ are uniformly ultimately bounded, and the  induced process $S_g(\cdot,\cdot)$ on $\big(\R^N\big)^+$  has a bounded pullback attractor.
\end{cor}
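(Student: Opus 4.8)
The plan is to mirror the scheme of Corollary~\ref{coro:alpha}, replacing the alpha limit set by the omega limit set and the backward dichotomy by the forward one A$_3^*$, and then to invoke Theorem~\ref{th:boundedpullback}. According to that theorem, once the local skew-product flow is continuous it suffices to check two facts: first, that for every $g\in\{f_s\mid s\ge 0\}\cup\mathds{O}(f)$ and every $x_0\in(\R^N)^+$ the solution $x(\cdot,g,x_0)$ is defined on $[0,\infty)$; and second, that for some $\tau\in\R$ the solutions of $\dot x=f(t,x)$ are uniformly ultimately bounded on $[\tau,\infty)$. The second is precisely the content of the Theorem immediately preceding this corollary, so the whole task reduces to the forward global existence of the solutions.

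For the translates $f_s$ with $s\ge 0$ global existence is immediate, since $x(t,f_s,x_0)=x(t+s,f,s,x_0)$ is defined on $[0,\infty)$ by Proposition~\ref{prop:positive}(ii). For $g\in\mathds{O}(f)$ I would first record the uniform a priori bound hidden in the (omitted) proof of the preceding Theorem. Using the comparison $0\le x(r)\le y(r)$ with $y$ solving $\dot y=A(r)\,y+b(r)$, the variation-of-constants formula for $y$, and the forward dichotomy A$_3^*$, which bounds $\|\Phi(t+t_0)\,\Phi^{-1}(r)\|\le K\,e^{-\alpha_1(t+t_0-r)}$ uniformly for all $0\le r\le t+t_0$ and all $t_0\ge 0$ with the \emph{single} constant $K$, together with the $L^1_{loc}$-boundedness of $\{b_t\}$ from A$_4$, one obtains
\begin{equation*}
|S_f(t+t_0,t_0)\,x_0|\le d\,K+\frac{c_1\,K}{1-e^{-\alpha_1}}=:c(d)\qquad\text{for all } t_0\ge0,\ t\ge 0,\ x_0\in(\R^N)^+,\ |x_0|\le d.
\end{equation*}
The crucial point is that, thanks to A$_3^*$, this estimate is uniform over the \emph{entire} forward half-line and not merely ultimate.

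Now fix $x_0\in(\R^N)^+$ and write $g=\lim_{n\to\infty}f_{t_n}$ with $t_n\uparrow\infty$, and let $s\in[0,b_{g,x_0})$. For $n$ large enough $t_n\ge 0$, so writing $x(s,f_{t_n},x_0)=S_f(s+t_n,t_n)\,x_0$ the displayed bound (with $d=|x_0|+1$) yields $|x(s,f_{t_n},x_0)|\le c(|x_0|+1)$, while each such solution is nonnegative by Proposition~\ref{prop:positive}(i) and globally defined as a translate of $f$. By the assumed continuity of the skew-product flow, $x(s,f_{t_n},x_0)\to x(s,g,x_0)$ as $n\to\infty$ on the compact interval $[0,s]\subset[0,b_{g,x_0})$, so $|x(s,g,x_0)|\le c(|x_0|+1)$ and $x(s,g,x_0)\ge 0$. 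Since this bound is independent of $s\in[0,b_{g,x_0})$, the solution cannot blow up in finite forward time, whence $b_{g,x_0}=+\infty$ and $x(\cdot,g,x_0)$ is defined on $[0,\infty)$ inside $(\R^N)^+$. Both hypotheses of Theorem~\ref{th:boundedpullback} are therefore met, and that theorem delivers the uniform ultimate boundedness of the solutions of $\dot x=g(t,x)$ and the bounded pullback attractor for $S_g(\cdot,\cdot)$ on $(\R^N)^+$.

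The main obstacle is the middle step: extracting from the omitted proof of the preceding Theorem an estimate valid on the whole forward half-line rather than only eventually. It is exactly the forward exponential dichotomy A$_3^*$ that renders the constant $K$, and hence $c(d)$, independent of the base time $t_0\ge 0$; with that uniformity in hand the remaining continuity-of-the-flow limiting argument is identical to the one already carried out for the alpha limit set in Corollary~\ref{coro:alpha}.
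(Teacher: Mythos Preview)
Your proposal is correct and follows essentially the same route as the paper. The paper does not give a detailed proof of this corollary; it simply notes that the preceding theorem (uniform ultimate boundedness on $[\tau,\infty)$) together with the global-existence check, carried out exactly as in Corollary~\ref{coro:alpha} but on the forward half-line, allows Theorem~\ref{th:boundedpullback} to be applied, and your write-up fills in precisely those details.
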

Finally, if we change  hypothesis \textbf A$_3$ by \smallskip
\begin{itemize}
\item[\textbf A$_3^\bullet$:] the linear equation $\,\dot y=A(t)\,y\,$ has exponential dichotomy on $\R$ with projection $P=\rm{Id}$, i.e. there is an $\,\alpha_1>0$ and a constant $K\ge 1$ such that
    \begin{equation*}\label{eq:dicho-sysvecR}
    \|\Phi(t)\,\Phi^{-1}(s)\|\le K\,e^{-\alpha_1\,(t-s)} \quad \text{ for }  s\le t\,,
    \end{equation*}
    where $\Phi(t)$ is the fundamental matrix solution with $\Phi(0)={\rm I}_N$,\smallskip
\end{itemize}
we obtain a result analogous to Theorem~\ref{thm:boundedpullbackHull}, whose proof is omitted, and the corresponding corollary, consequence of Theorem~\ref{thm:pullbackHull}.
\begin{thm}
Consider $f\in\LC$ satisfying {\rm\textbf A$_1$}, {\rm\textbf A$_2$}, {\rm\textbf A$_3^\bullet$} and {\rm\textbf A$_4$}. Then there is a  pullback bounded absorbing set $B$ satisfying~\eqref{eq:bounabs} and, hence, the induced process~\eqref{eq:process+} has a bounded pullback attractor.
\end{thm}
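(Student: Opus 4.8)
The plan is to reproduce, in the system setting, the argument of Theorem~\ref{thm:boundedpullbackHull}, which itself adapts the estimate of Theorem~\ref{thm:pullbound-sys} to the case of a dichotomy valid on the whole real line. First I would invoke Proposition~\ref{prop:positive}: since $f$ satisfies \textbf A$_1$ and \textbf A$_2$, every solution issuing from $x_0\in(\R^N)^+$ stays nonnegative and is defined on $[t_0,\infty)$, so the process \eqref{eq:process+} is well defined on $(\R^N)^+$ and it is meaningful to look for a bounded absorbing set there. The whole proof then rests on recycling the comparison estimate already developed for Theorem~\ref{thm:pullbound-sys}, with \textbf A$_3$ replaced by the stronger \textbf A$_3^\bullet$.

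Concretely, fixing a bounded set $D\subset(\R^N)^+$ with $\sup_{x\in D}|x|\le d$ and writing $x(r)=x(r,f,t-s,x_0)$ for $x_0\in D$, the differential inequality \eqref{eq:desi-sys} together with the quasi-monotonicity of $\dot y=A(t)\,y+b(t)$ yields $0\le x(t)\le \Phi(t)\,\Phi^{-1}(t-s)\,x_0+\int_{t-s}^t\Phi(t)\,\Phi^{-1}(r)\,b(r)\,dr$, exactly as in Theorem~\ref{thm:pullbound-sys}. The essential difference is that \textbf A$_3^\bullet$ provides an exponential dichotomy on all of $\R$ with a single constant $K$, in place of the $t$-dependent function $K(t)$ of Theorem~\ref{thm:pullbound-sys}. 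Applying it gives
\[
|S_f(t,t-s)\,x_0|\le d\,K\,e^{-\alpha_1\,s}+K\int_{t-s}^t e^{-\alpha_1\,(t-r)}\,|b(r)|\,dr
\]
for every $t\in\R$, and the integral term is bounded uniformly in $t$ by the geometric-series covering of $(-\infty,t]$ used in \eqref{eq:04-07_13:32}: by \textbf A$_4$ there is $c_1>0$ with $\int_\tau^{\tau+1}|b(u)|\,du\le c_1$ for all $\tau$, whence $\int_{t-s}^t e^{-\alpha_1(t-r)}|b(r)|\,dr\le c_1/(1-e^{-\alpha_1})$.

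Setting $\rho^2:=1+K\,c_1/(1-e^{-\alpha_1})$ and $T(D):=\ln(d^2K)/\alpha_1$, for $s\ge T(D)$ the first term is absorbed and one obtains $S_f(t,t-s)\,D\subset B_\rho$ for \emph{every} $t\in\R$ and $s\ge T(D)$, which is precisely condition~\eqref{eq:bounabs}; Remark~\ref{rmk:bound-attrac} then delivers the bounded pullback attractor. The only genuine point to stress---and the reason \textbf A$_3^\bullet$ is needed rather than \textbf A$_3$---is that both the absorbing radius $\rho$ and the absorbing time $T(D)$ come out independent of $t$. This uniformity is exactly what separates \eqref{eq:bounabs} from the weaker strongly-pullback-bounded-dissipative conclusion of Theorem~\ref{thm:pullbound-sys}, and it hinges entirely on the dichotomy holding on all of $\R$ with a constant $K$; the remaining steps are the routine repetition of the estimates in Theorems~\ref{thm:pullbound} and~\ref{thm:pullbound-sys}, so no new difficulty arises.
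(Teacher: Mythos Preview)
Your proposal is correct and follows exactly the route the paper intends: the paper omits this proof, indicating it is analogous to Theorem~\ref{thm:boundedpullbackHull}, which in turn recycles the estimate of Theorem~\ref{thm:pullbound-sys} with the $t$-dependent $K(t)$ replaced by the single constant $K$ coming from the full-line dichotomy in \textbf{A}$_3^\bullet$, and this is precisely what you do. One cosmetic slip: since the system estimate bounds $|S_f(t,t-s)\,x_0|$ rather than its square (unlike the scalar case of Theorem~\ref{thm:pullbound}), the natural choices here are $\rho:=1+K\,c_1/(1-e^{-\alpha_1})$ and $T(D):=\ln(d\,K)/\alpha_1$ rather than the squared versions you copied from the scalar setting; this does not affect the argument.
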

\begin{cor}
Let $f$ be a function in $\LC$ and $\T$ be a topology such that the induced local skew-product flow on $\mathrm{Hull}_{(\LC,\T)}(f)\times(\R^N)^+$ is continuous. If $f$ satisfies {\rm\textbf A$_1$}, {\rm\textbf A$_2$}, {\rm\textbf A$_3^\bullet$} and {\rm\textbf A$_4$}, and $g\in\mathrm{Hull}_{(\LC,\T)}(f)$, then the induced process $S_g(\cdot,\cdot)$ has a bounded pullback attractor.
\end{cor}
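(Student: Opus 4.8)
The plan is to deduce the corollary from the ``cone version'' of Theorem~\ref{thm:pullbackHull}, exactly as the corollaries following Theorems~\ref{thm:alphalimit} and~\ref{th:boundedpullback} were obtained for the alpha- and omega-limit sets. The single analytic input is the immediately preceding theorem: under \textbf A$_1$, \textbf A$_2$, \textbf A$_3^\bullet$ and \textbf A$_4$ it produces a pullback bounded absorbing set $B\subset\big(\R^N\big)^+$ for the process~\eqref{eq:process+} induced by $f$, satisfying the uniform absorption condition~\eqref{eq:bounabs}. The reason \textbf A$_3^\bullet$ (exponential dichotomy on all of $\R$) is needed, rather than \textbf A$_3$ or \textbf A$_3^*$, is that it furnishes a dichotomy constant $K$ valid for every pair $s\le t$; this is precisely what lets the radius of $B$ and the absorption time $T(D)$ be chosen independently of $t\in\R$, which is the exact form of~\eqref{eq:bounabs} demanded by Theorem~\ref{thm:pullbackHull}.

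With $B$ in hand, the second step is to run the argument of Theorem~\ref{thm:pullbackHull} verbatim. First I would record the decomposition $\mathrm{Hull}_{(\LC,\T)}(f)=\mathds{A}(f)\cup\mathds{O}(f)\cup\{f_\tau\mid\tau\in\R\}$. For $g\in\mathds{A}(f)$ (resp. $g\in\mathds{O}(f)$) one transfers the uniform bound~\eqref{eq:bounabs} to $S_g$ by the limiting argument of Theorem~\ref{thm:alphalimit} (resp. Theorem~\ref{th:boundedpullback}): writing $g=\lim_{n\to\infty}f_{t_n}$ and using the assumed continuity of the skew-product flow on $\mathrm{Hull}_{(\LC,\T)}(f)\times\big(\R^N\big)^+$ together with the translation structure of~\eqref{eq:bounabs}, one gets that the solutions of $\dot x=g(t,x)$ are uniformly ultimately bounded; for $g=f_\tau$ this is immediate. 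By Remarks~\ref{rmk:unifultboun} and~\ref{rmk:bound-attrac}, uniform ultimate boundedness then yields a bounded pullback attractor for each $S_g(\cdot,\cdot)$.

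The only genuine point of care — and the main (though modest) obstacle — is to keep the whole argument inside the cone $\big(\R^N\big)^+$. That the processes $S_g$ are well defined there is part of the hypothesis (continuity of the skew-product flow on $\mathrm{Hull}_{(\LC,\T)}(f)\times\big(\R^N\big)^+$); concretely it is underpinned by Proposition~\ref{prop:positive}(i), which keeps solutions nonnegative. Granting this, every step of Theorems~\ref{thm:alphalimit},~\ref{th:boundedpullback} and~\ref{thm:pullbackHull} transfers unchanged with $\R^N$ replaced by $\big(\R^N\big)^+$ and ``bounded set'' read as bounded subset of the cone, which completes the proof.
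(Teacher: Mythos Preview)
Your approach is exactly the paper's: invoke the immediately preceding theorem to obtain a pullback bounded absorbing set $B$ satisfying~\eqref{eq:bounabs}, and then apply Theorem~\ref{thm:pullbackHull} (in its cone version) to each $g\in\mathrm{Hull}_{(\LC,\T)}(f)$. One small slip: the global forward existence of $S_g(\cdot,\cdot)$ on $\big(\R^N\big)^+$ is \emph{not} part of the hypothesis (which only gives continuity of the \emph{local} flow) and Proposition~\ref{prop:positive} applies to $f$, not to an arbitrary $g$ in the hull; as in Corollary~\ref{coro:alpha} and the remark preceding the scalar analogue of this corollary, global definition for $g$ must be derived from the uniform estimate~\eqref{eq:bounabs} for $f$ together with the continuity of the flow, which bounds $|x(s,g,x_0)|$ on its maximal interval and rules out finite-time blowup.
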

Again, we summarize the results for the existence of a pullback and a global attractor for the  induced skew-product semiflow on $\mathrm{Hull}_{(\LC,\T)}(f)\times(\R^N)^+$ (resp. $\mathds{A}(f)\times(\R^N)^+$ and $\mathds{O}(f)\times(\R^N)^+$) in the following remark.
\begin{rmk}  Under assumptions {\rm\textbf A$_1$}, {\rm\textbf A$_2$}, {\rm\textbf A$_3^\bullet$} and {\rm\textbf A$_4$}, (i) and (ii) of Theorem~\ref{thm:skpHull} hold.  The same happens for the conclusions of Corollary~\ref{skpAlpha} (resp.~\ref{skpOmega}) when {\rm\textbf A$_1$}, {\rm\textbf A$_2$}, {\rm\textbf A$_3$} and {\rm\textbf A$_4$} (resp. {\rm\textbf A$_1$}, {\rm\textbf A$_2$}, {\rm\textbf A$_3^*$} and {\rm\textbf A$_4$}) are assumed.
\end{rmk}


\begin{thebibliography}{99}
\bibitem{book:LA} \textsc{L. Arnold}: \emph{Random Dynamical Systems}, \textrm{Springer-Verlag, Berlin, Heidelberg, 1998.}
\bibitem{paper:ZA1} \textsc{Z. Artstein}: Topological dynamics of an ordinary differential equation, \emph{J. Differential Equations} \textbf{23} (1977), 216--223.
\bibitem{paper:ZA2} \textsc{Z. Artstein}: Topological dynamics of ordinary differential equations and Kurzweil equations, \emph{J. Differential Equations} \textbf{23} (1977), 224--243.
\bibitem{paper:ZA3} \textsc{Z. Artstein}: The limiting equations of nonautonomous ordinary differential equations, \emph{J. Differential Equations} \textbf{25} (1977), 184--202.
\bibitem{paper:AW} \textsc{B. Aulbach, T. Wanner}: Integral manifolds for Carath\'eodory type differential equations in Banach spaces, \emph{Six Lectures on Dynamical Systems}
(B. Aulbach \& F. Colonius eds), World Scientific, Singapore, 1996, 45--119.
\bibitem{paper:BS} \textsc{A. Berger, S. Siegmund}: On the Gap between Random Dynamical Systems and Continuous Skew Products,  \emph{J. Dynam. Differential Equations} \textbf{15} (2003), 237--279.
\bibitem{book:CH} \textsc{T. Caraballo, X. Han}: \emph{Applied nonautonomous and random dynamical systems. Applied dynamical systems}. SpringerBriefs in Mathematics. Springer, Cham, 2016.
\bibitem{paper:CLO} \textsc{T. Caraballo, J.A Langa, R. Obaya}:
Pullback, forward and chaotic dynamics in 1D noautonomous linear-dissipative equations.
\emph{Nonlinearity} \textbf{30} (2017), no. 1, 274--299.
\bibitem{book:CLR} \textsc{A. Carvalho, J.A. Langa, J. Robinson}: \emph{Attractors for infinite-dimensional non-au\-tono\-mous dynamical systems}, \textrm{Springer-Verlag New York, 2013.}
\bibitem{paper:CKS} \textsc{D.N. Cheban, P. Kloeden, B. Schmalfu{\ss}, B}:  The relationship between pullback, forward and global attractors of nonautonomous dynamical systems. \emph{Nonlinear Dyn. Syst. Theory} \textbf{2} (2002), no. 2, 125--144.
\bibitem{book:CL} \textsc{E.A. Coddington, N. Levinson}: \emph{Theory Of Ordinary Differential Equations}, \textrm{McGraw-Hill, New York, 1955.}
\bibitem{paper:AJH} \textsc{A.J. Heunis}: Continuous dependence of the solutions of an ordinary differential equation, \emph{J. Differential Equations} \textbf{54} (1984), 121--138.
\bibitem{book:JONNF} \textsc{R. Johnson, R. Obaya, S. Novo, C. Nu\~nez, R. Fabbri}: \emph{Nonautonomous Linear Hamiltonian Systems: Oscillation, Spectral Theory And Control.}, Developments in Mathematics \textbf{36}, \textrm{Springer, Switzerland, 2016.}
\bibitem{book:Kall} \textsc{O. Kallenberg}: \emph{Random Measures.} Third Edition, \textrm{Akademie-Verlag, Berlin, 1983.}
\bibitem{book:KR} \textsc{P. Kloeden, M. Rasmussen}: \emph{Nonautonomous dynamical systems,} Mathematical Surveys and Monographs, \textbf{176}, American Mathematical Society, Providence, RI, 2011.
\bibitem{book:Kurz} \textsc{J. Kurzweil}: \emph{Ordinary Differential Equations},  Studies in Applied Mechanics \textbf{13}, \textrm{Elsevier, Amsterdam, 1986.}
\bibitem{paper:LNO} \textsc{I.P. Longo, S. Novo, R. Obaya}: Topologies of $L^p_{loc}$ type for Carath\'eodory functions with applications in non-autonomous differential equations, to appear in \emph{J. Differential Equations}.
\bibitem{book:RMGS} \textsc{R.K. Miller, G. Sell}: Volterra Integral Equations and Topological Dynamics, \emph{Mem. Amer. Math. Soc.}, no. 102, \textrm{Amer. Math. Soc., Providence, 1970.}
\bibitem{paper:RMGS1} \textsc{R.K. Miller, G. Sell}: Existence, uniqueness and continuity of solutions of integral equations, \emph{Ann. Math. Pura Appl.} \textbf{80} (1968), 135--152;  Addendum: ibid. \textbf{87} (1970), 281--286.
\bibitem{paper:LWN} \textsc{L.W. Neustadt}: On the solutions of certain integral-like operator equations. Existence, uniqueness and dependence theorems, \emph{Arch. Rational Mech. Anal.} \textbf{38} (1970), 131--160.
\bibitem{paper:olop} \textsc{C. Olech, Z. Opial}: Sur une in\'{e}galit\'{e} diff\'{e}rentielle,
\emph{Ann. Polon. Math.} \textbf{7} (1960) 247-–254.
\bibitem{paper:O} \textsc{Z. Opial}: Continuous parameter dependence in linear systems of differential equations. \emph{J. Differential Equations} \textbf{3} (1967), 571--579.
\bibitem{paper:CPMR} \textsc{C. P\"otzsche, M. Rasmussen}: Computation of integral manifolds for Carath\'{e}odory differential equations, \emph{J. Numer. Anal.} \textbf{30} (2010), 401--430.
\bibitem{sase3} \textsc{R.J.~Sacker, G.R.~Sell}:
        Existence of dichotomies and invariant splittings for linear differential systems I, \emph{ J. Differential Equations} {\bf 15} (1974), 429--458.
\bibitem{sase4} \textsc{R.J.~Sacker, G.R.~Sell}:
        Existence of dichotomies and invariant splittings for linear differential system II,
        \emph{ J. Differential Equations}  {\bf 22} (1976), 478--496.
\bibitem{sase5}  \textsc{ R.J.~Sacker, G.R.~Sell},
        Existence of dichotomies and invariant splittings for linear differential systems III,
      \emph{ J. Differential Equations} {\bf 22} (1976), 497--522.
\bibitem{sase}  \textsc{ R.J.~Sacker, G.R.~Sell},
        A spectral theory for linear differential systems,
        \emph{ J. Differential Equations}  {\bf 27} (1978), 320--358.
\bibitem{paper:GS1} \textsc{G. Sell}: Compact sets of nonlinear operators, \emph{Funkcial. Ekvac.} \textbf{11} (1968), 131--138.
\bibitem{book:GS} \textsc{G. Sell}: \emph{Topological Dynamics and Ordinary Differential Equations}, \textrm{Van Nostrand-Reinhold, London, 1971.}
\bibitem{paper:WSYY} \textsc{W. Shen, Y. Yi}: Almost automorphic and almost periodic dynamics in skew-product semiflows, \emph{Mem. Amer. Math. Soc.} \textbf{136}, no. 647, \textrm{Amer. Math. Soc., Providence,  1998}.
\bibitem{paper:SS} \textsc{S. Siegmund}: Dichotomy spectrum for nonautonomous differential equations, \emph{J. Dynam. Differential Equations} \textbf{14} (2002), 243--258.
\end{thebibliography}
\end{document}